\theoremstyle{plain}
\newtheorem{theorem}{Theorem}[section]
\newtheorem{lemma}[theorem]{Lemma}
\theoremstyle{definition}
\theoremstyle{remark}
\newtheorem{remark}[theorem]{Remark}
\numberwithin{equation}{section} %% Equation numbering control.
\numberwithin{figure}{section}   %% Figure numbering control.
\newcommand{\vect}[1]{\mathbf{#1}} %%%%%% This is to better see vectors visually. In final draft get rid of \red
\newcommand{\bk}{\vect{k}}
\newcommand{\bu}{\vect{u}}
\newcommand{\bv}{\vect{v}}
\newcommand{\bw}{\vect{w}}
\newcommand{\bx}{\vect{x}}
\newcommand{\bbf}{\vect{f}}
\newcommand{\field}[1]{\mathbb{#1}}
\newcommand{\nN}{\field{N}}
\newcommand{\nZ}{\field{Z}}
\newcommand{\nR}{\field{R}}
\newcommand{\bphi}{\vect{\phi}}
\newcommand{\nT}{\mathbb T}
\newcommand{\pd}[2]{\frac{\partial #1}{\partial #2}}
\renewcommand{\abs}[1]{\left\lvert#1\right\rvert}
\newcommand{\set}[1]{\left\{#1\right\}}
\renewcommand{\norm}[1]{\left\|#1\right\|}
\newcommand{\fournorm}[1]{\|#1\|_{L^4}}
\newcounter{my_counter}
\renewcommand{\grad}{\nabla}
\renewcommand{\div}{\nabla \cdot} 
\newcommand{\lap}{\Delta} 
\newcommand{\bvn}{\vect{v_{n}}}
\newcommand{\bwn}{\vect{w_{n}}}
\newcommand{\weaklyto}{\rightharpoonup}
\newcommand{\weakstarto}{\overset{*}{\rightharpoonup}}
\newcommand{\Calpha}{{ C_\alpha }}
\newcommand{\Cnu}{{ C_\nu }}
\title{Continuous Data Assimilation for the 3D and Higher-Dimensional Navier--Stokes equations with Higher-Order Fractional Diffusion}
\date{\today} % Started June 2022
\keywords{Continuous Data Assimilation, Navier--Stokes equations, Fractional Diffusion, 
    % Azouani-Olson-Titi algorithm;
    % Navier--Stokes equations;
    % Continuous Data Assimilation;
    % Fractional Diffusion;
    Higher-Dimensional Navier-Stokes\\
    % Fluid Dynamics\\ 
MSC 2020: 
% 35Q30; %NSE equations
% 35R11; %Fractional PDEs
% 76D05; %Fluid Mechanics - NSE
% 35C50; %Approximate trajectories in smooth dynamics
% 35Q35; %PDEs in connection with fluid mechanics
% 35Q93 %PDEs in connection with control and optimization
35Q30, %AOT Navier--Stokes equations
93C20, %AOT Control/observation systems governed by partial differential equations 
35R11, % Fractional PDEs
37C50, %AOT Approximate trajectories (pseudotrajectories, shadowing, etc.) 
76B75, %AOT Flow control and optimization for incompressible inviscid fluids
34D06 %AOT Synchronization
%35C50, % Approximate trajectories in smooth dynamics
%35Q35, % PDEs in connection with fluid mechanics
%35Q93, % PDEs in connection with control and optimization
%76D03, % Existence, uniqueness, and regularity theory (Incompressible viscous fluids)
%76D05, % Fluid Mechanics - NSE
% %\keywords{35B65, % Smoothness and regularity of solutions
% 34D06, % Synchronization
% %35A01, % Existence problems: global existence, local existence, non-existence
% % 35K40, % Second-order parabolic systems
% % 35K57 % Reaction-diffusion equations 
% % 35K61, % Nonlinear initial-boundary value problems for nonlinear parabolic equations
% 35Q30, % Navier--Stokes equations
% 35Q35, % PDEs in connection with fluid mechanics
% % 35Q86, % PDEs in connection with geophysics
% % 35Q93, % PDEs in connection with control and optimization 
% 37C50, % Approximate trajectories (pseudotrajectories, shadowing, etc.) 
% 76D03, % Existence, uniqueness, and regularity theory (Incompressible viscous fluids)
% % 76F05 % Isotropic turbulence; homogeneous turbulence 
% 35Q30; %NSE equations
% 35R11; %Fractional PDEs
% 76D05; %Fluid Mechanics - NSE
% 35C50; %Approximate trajectories in smooth dynamics
% 35Q35; %PDEs in connection with fluid mechanics
% 35Q93 %PDEs in connection with control and optimization
}
\author{Adam Larios}
\address[Adam Larios]{Department of Mathematics, 
                University of Nebraska--Lincoln,
        Lincoln, NE 68588-0130, USA}
\email[Adam Larios]{alarios@unl.edu}
\author{Collin Victor}
\address[Collin Victor]{Department of Mathematics, 
                University of Nebraska--Lincoln,
        Lincoln, NE 68588-0130, USA}
\email[Collin Victor]{collin.victor@huskers.unl.edu}
\title{Continuous Data Assimilation for the 3D and Higher-Dimensional Navier--Stokes equations with Higher-Order Fractional Diffusion}
\date{\today} % Started June 2022
\keywords{Azouani-Olson-Titi algorithm, Continuous Data Assimilation, Navier--Stokes equations, Fractional Diffusion, 
    % Navier--Stokes equations;
    % Continuous Data Assimilation;
    % Fractional Diffusion;
    Higher-Dimensional Navier-Stokes.\\
    % Fluid Dynamics\\ 
MSC 2020: 
% 35Q30; %NSE equations
% 35R11; %Fractional PDEs
% 76D05; %Fluid Mechanics - NSE
% 35C50; %Approximate trajectories in smooth dynamics
% 35Q35; %PDEs in connection with fluid mechanics
% 35Q93 %PDEs in connection with control and optimization
35Q30, %AOT Navier--Stokes equations
93C20, %AOT Control/observation systems governed by partial differential equations 
35R11, % Fractional PDEs
37C50, %AOT Approximate trajectories (pseudotrajectories, shadowing, etc.) 
76B75, %AOT Flow control and optimization for incompressible inviscid fluids
34D06} %AOT Synchronization
\begin{document}

\begin{abstract}
    We study the use of the Azouani-Olson-Titi (AOT) continuous data assimilation algorithm to recover solutions of the Navier--Stokes equations modified to have higher-order fractional diffusion. The fractional diffusion case is of particular interest, as it is known to be globally well-posed for sufficiently large diffusion exponent $\alpha$. In this work, we prove that the assimilation equations are globally well-posed, and we demonstrate that the solutions produced by the AOT algorithm exhibit exponential convergence in time to the reference solution, given a sufficiently high spatial resolution of observations and a sufficiently large nudging parameter.

    We also note that the results hold in spatial dimensions $d$ where $2\leq d\leq 8$, so long as $\alpha\geq \frac12 +\frac{d}{4}$. Though the cases $3<d\leq8$ are likely only a mathematical curiosity, we include them as they cause no additional difficulty in the proof.  Note that we show in a companion paper the $d=2$ case allows for $\alpha<1$.
\end{abstract}

\maketitle
\thispagestyle{empty}%Gets rid of page number on first page.

\section{Introduction}
\noindent
There is a general understanding in the community that diffusion is needed for data assimilation schemes to converge to the true solution. Just how much diffusion does one need?  In the present work, we investigate this question in the context of the 3D Navier--Stokes equations (NSE) with fractional diffusion.  In particular, we prove that Azouani-Olson-Titi (AOT)-style data assimilation converges exponentially fast in time to the reference solution in the 3D case if one uses fractional diffusion with an exponent ($\alpha$) that is at least the Lions' exponent ($\alpha\geq5/4$), i.e., slightly more than the physical viscosity.  
Given that the large-time behavior of the 3D Navier--Stokes equations poses well-known major challenges for conventional analysis, we examine this variant of the NSE to enhance understanding of the algorithm's performance in the 3D setting.
Interestingly (from a purely mathematical perspective), we show that our results extend up to dimension eight, so long as sufficient diffusion is used.  The restriction on the dimension is discussed in \Cref{rmk_dimension_restriction}.    
In a follow-up paper \cite{Carlson_Larios_Victor_2023_2D_fractional_AOT}, we show that the AOT algorithm also enjoys convergence for the 2D NSE with lower-order diffusion ($\alpha<1$); i.e., the case of hypo-diffusion, and also investigate this setting computationally. Note that in the 2D setting, one can consider an approach that is currently out of reach for the 3D case; namely one can take the reference solution to be the unmodified Navier-Stokes (or even Euler) equations, although the error only converges up to a non-zero level determined by $\alpha$.  This is investigated in \cite{Carlson_Larios_Victor_2023_2D_fractional_AOT}.
% and potential analytical limitations as the diffusion exponent decreases.
%In the 2D case, we prove that any amount of fractional dissipation ($s>0$) is sufficient for convergence.  

% In this study, we investigate the application of a continuous data assimilation algorithm to the NSE with fractional diffusion. These equations exhibit global well-posedness for a fractional diffusion exponent $\alpha$ greater than the Lions' exponent ($\alpha\geq \frac{d}{4} + \frac{1}{2}$), offering an opportunity to gain insights into the behavior of the 3D Navier--Stokes equations. Our investigation focuses on the algorithm's applicability to NSE across arbitrary dimensions ($2 \leq d \leq 8$), with particular emphasis on the 3D case. 

% The incompressible Navier--Stokes equations are given as follows:
% \begin{align}\label{NSE}
%     \bu_t + \bu\cdot\grad \bu + \grad p = - \nu (-\lap)^\alpha \bu\\
%     \div \bu = 0
% \end{align}
% Here $\bu$ 
% \begin{align}\label{NSE_AOT}
%     \bv_t + \bv\cdot\grad \bv + \grad q = - \nu (-\lap)^\alpha \bv + \mu I_h(\bu - \bv)\\
%     \div \bv = 0
% \end{align}

%%%%%%%%%%%%%%%%%%%%%%%%%%%%%%%%%%%%%%% Fill this section out.

The term \textit{data assimilation} refers to a wide class of schemes that combine observational data with the underlying physical model in order to better simulate the future state of a dynamical system. 
Some classical methods of data assimilation are the Kalman filter and its variants, which are a form of linear quadratic estimation, as well as variational methods, such as 3D/4D Var, which come from control theory.  For a deeper discussion of these methods, see, e.g., \cite{asch2016DA,Law_Stuart_Zygalakis_2015_book} and the references therein.
Instead of these classical methods, in this work we investigate the usage of the Azounai-Olson-Titi (AOT) continuous data assimilation algorithm, which has many advanges, and has seen much activity in recent years, as discussed below.

The AOT algorithm differs from other contemporary methods of data assimilation by implementing a feedback-control term at the partial differential equation (PDE) level. The algorithm begins with a dynamical system, e.g., the incompressible NSE, given abstractly\footnote{We denote $\bu_t:=\pd{\bu}{t}$ and we denote initial data by $\bu_0$, but these subscripts should not be a source of confusion.} as 
\begin{align}\label{eq: F}
    \bu_t = F(\bu).
\end{align}
Note that the initial state of the system, call it $\bu_0$, may be entirely unknown for the purposes of applying the AOT algorithm. 
If the initial state of system \cref{eq: F} was known exactly, at every point in the domain, then in principle, the future evolution of the solution could be determined without using data assimilation.  However, in practice, the information about the initial data might only be known sparsely in space (e.g., at the locations of weather monitoring devices).  Given a sufficiently chaotic system (e.g., NSE in a turbulent regime), these unknowns can rapidly create large-scale errors, resulting in drastically inaccurate predictions.  
To resolve this issue, the AOT algorithm feeds spatially sparse observational data of the ``true'' solution $\bu$ into a model via the following PDE, equipped with arbitrary admissible initial data:
\begin{align}\label{eq: F AOT}
    &\bv_t = F(\bv) + \mu I_h(\bu- \bv),\\
    &\bv(\bx,0) = \bv_0. \notag
\end{align}
Here $\mu > 0$ is a feedback-control parameter with units inversely proportional to time, and $I_h$ is a linear interpolant, characterized by some ``typical'' length scale $h>0$ (e.g., $h$ might correspond to some typical distance between observation locations). We note that the AOT algorithm appears similar to nudging, also known as Newtonian relaxation proposed in \cite{Anthes_1974_JAS,Hoke_Anthes_1976_MWR}. This similarity is superficial as the interpolation operator in \cref{eq: F AOT} has a large impact on the implementation and convergence rates for this algorithm that separates it from nudging methods. Moreover, the AOT algorithm is supported by rigorous mathematical analysis (discussed below), unlike the case of Newtonian relaxation.  For an in-depth examination of nudging methods see, e.g., \cite{lakshmivarahan2013nudging} and the references contained within.

The AOT algorithm was first developed in \cite{Azouani_Titi_2014} for a 1D reaction diffusion equation and then given in \cite{Azouani_Olson_Titi_2014} for general classes of interpolants in the context of the 2D incompressible Navier--Stokes equations. The first simulations of the algorithm were performed in \cite{Gesho_Olson_Titi_2015}. Subsequently the AOT algorithm has been the subject of much study since its development in 2014. 
While the algorithm was first applied to the 1D Allen--Cahn equation and 2D NSE in \cite{Azouani_Olson_Titi_2014,Azouani_Titi_2014}, it has since been applied successfully to many other dissipative dynamical systems. This includes the Cahn--Hilliard equations \cite{Diegel_Rebholz_2021}, the 3D primitive ocean equations \cite{Carlson_VanRoekel_Petersen_Godinez_Larios_2021,Larios_Petersen_Victor_2023,Pei_2019}, the surface quasi-geostrophic equations \cite{Jolly_Martinez_Olson_Titi_2018_blurred_SQG,Jolly_Martinez_Titi_2017}, the Kuramoto--Sivashinsky equation \cite{Larios_Pei_2017_KSE_DA_NL,Lunasin_Titi_2015,Pachev_Whitehead_McQuarrie_2021concurrent}, and various regularizations of NSE \cite{Albanez_Nussenzveig_Lopes_Titi_2016,Gardner_Larios_Rebholz_Vargun_Zerfas_2020_VVDA,Larios_Pei_2018_NSV_DA}.  One could include the present work in this last category as well, as higher-order diffusion can be thought of as a regularization of the NSE. There have also been several studies that investigate the ability of this algorithm to recover the solutions to dynamical systems with a variety of restrictions placed on the observational data. This includes observations that include measurement error \cite{Bessaih_Olson_Titi_2015,Celik_Olson_2022,Foias_Mondaini_Titi_2016}, incomplete observations of each variable \cite{Farhat_Jolly_Titi_2015,Farhat_Lunasin_Titi_2016abridged,Farhat_Lunasin_Titi_2016benard,Farhat_Lunasin_Titi_2016_Charney,Farhat_Lunasin_Titi_2017_Horizontal}, observations that are discrete or averaged in time \cite{Celik_Olson_2022,Foias_Mondaini_Titi_2016,Jolly_Martinez_Olson_Titi_2018_blurred_SQG,Larios_Pei_Victor_2023}. There have also been numerous studies adjusting the assimilation of the observational data, including assimilating observational data on a localized patch of the domain \cite{Biswas_Bradshaw_Jolly_2020}, using a mobile observers to gather data \cite{Biswas_Bradshaw_Jolly_2022,Franz_Larios_Victor_2022,Larios_Victor_2019}, and using nonlinear variations of the feedback-control term 
\cite{Carlson_Larios_2020_nonlinConv,Larios_Pei_2017_KSE_DA_NL}. 
Related algorithms which use interpolated or filtered data as in AOT, but insert the results directly have been studied in \cite{Celik_Olson_Titi_2019,Hayden_Olson_Titi_2011,Larios_Pei_Victor_2023,Olson_Titi_2003}.
Recently there has been work done on modifying this algorithm for use not only in recovering solutions, but in system identification. That is, the AOT algorithm has been modified to simultaneously recover the true solution of a dynamical system along with the viscosity \cite{Carlson_Hudson_Larios_2020, Farhat_GlattHoltz_Martinez_McQuarrie_Whitehead_2019} or the forcing \cite{farhat2023identifying,Martinez_2022}.

% By incorporating this sparse observational data we drive $\bv$ towards $\bu$ and recover the true solution exponentially fast in time for any choice of initial data $\bv_0$. In this work we prove analytically that the system \cref{eq: F AOT} is globally well-posed, where $\vect{f}$ is given by the incompressible NSE with fractional diffusion. Moreover we prove that $\bv$ approaches $\bu$ exponentially fast in time in relevant Sobolev spaces dependent on the fractional exponent $\alpha$.

The motivation for adding hyper-diffusion is to dampen the small scales generated by the nonlinearity by intensifying the stabilizing effect of the diffusive term.  
In the context of data assimilation, we show in the present work that this means that not only is the PDE model globally well-posed, but the small scales are sufficiently subordinate to the large scales to the extent that they may be captured (asymptotically in time) by observing only the large scales, at least when $\alpha\geq \frac{d}{4}+\frac12$.  It is worth mentioning that non-uniqueness of weak solutions for the 3D NSE with fractional dissipation and $\alpha<5/4$ was shown in 
\cite{Luo_Titi_2020_Lions} (this was first carried out in the $\alpha=1$ case in \cite{Buckmaster_Vicol_2019_nonuniqueness}).  While it is unclear exactly how connects to the results shown here, the results of \cite{Luo_Titi_2020_Lions} indicate that the constraint in this work that $\alpha\geq5/4$ (when $d=3$) may be a fundamental barrier, and not just a technical one.

This work is organized as follows. In \Cref{preliminaries} we outline the formulation of the PDEs, the functional setting of the Navier--Stokes equations, as well as some useful inequalities used in the remainder of the work. In \Cref{well posedness} we prove global well-posedness for classical solutions to \cref{eq: F AOT} by applying Galerkin arguments as well as a bootstrap argument to obtain higher regularity. In \Cref{convergence} we prove our main results, including exponential convergence of $\bv $ to $\bu$ in relevant Sobolev spaces, $H$ and $V^\alpha$, for any admissible choice of initial data $\bv_0$. In \Cref{conclusion} we give some concluding remarks.  For part of this work, we require uniform bounds on some of the quantities.  It seems that such results, at least for the $d>3$ and $\alpha\geq \frac{d}{4}+\frac12$ case, are somewhat obscured in the literature.  Hence, for the convenience of the reader and also for completeness, we prove these results in the appendix.

\section{Preliminaries}
\label{preliminaries}
\noindent
We consider the $d$-dimensional incompressible Navier--Stokes equations with fractional diffusion:
\begin{align}
    \bu_t + \bu \cdot \nabla \bu + \nu (-\lap)^\alpha \bu &= \nabla p +\vect{f},\label{eq: NSE}
    \\
    \div \bu &= 0. \notag
\end{align}
Here $\bu(\bx,t) = (u_1(\bx, t), u_2(\bx,t),..., u_d(\bx,t))$ is the velocity, $\nu>0$ is the viscosity, $p$ is the pressure, and $\vect{f}$ is a forcing term. We will consider these equations in dimensions $2 \leq d \leq 8$, as there is no additional difficulty in examining these higher dimensions. 

When we apply the AOT algorithm to these equations, we obtain
\begin{align}
        \bv_t + \bv \cdot \nabla \bv + \nu (-\lap)^\alpha \bv &= \nabla q +\vect{f}+ \mu I_h(\bu - \bv),\label{eq: NSE AOT}
        \\
    \div \bv &= 0. \notag
\end{align}
Here $I_h$ is a linear interpolation term with associated length scale $h$, and $\mu >0$ is a feedback-control parameter.
We note that, for our main results to hold, we require $I_h$ to satisfy the following inequality for any $s\geq 0$.
\begin{align}
    \norm{I_h(\vect{f}) - \vect{f}}_{V^s}^2 \leq c_{0,\alpha} h^2\norm{\vect{f}}_{V^{s+\alpha}}^2.\label{eq: interp}
\end{align}
The exact definition of these norms will be discussed below.

\subsection{Functional Setting of Navier--Stokes equations}
To begin, we denote the following space of test functions 
\begin{align}
    \mathcal{V} := \left\{ \vect\varphi \in \mathcal{F}: \grad \cdot \vect\varphi = 0 \text{ and } \int_\Omega \vect\varphi(\bx) \,d\bx = 0 \right\}, 
\end{align}
where $\mathcal{F}$ is the set of all d-dimensional vector-valued trigonometric polynomials defined on the domain 
$\Omega = \mathbb{T}^d = \mathbb{R}^d/\mathbb{Z}^d = [0,1]^d$. 
We define $H = \overline{\mathcal{V}}^{L^2}$ and 
$V = \overline{\mathcal{V}}^{V^\alpha}$, where $H$ and $V^\alpha$ are standard Lebesgue and Sobolev spaces defined on $\Omega$. We define the inner products on $H$ and $V$ respectively by
\begin{align}
 (\bu,\bv) = \sum_{i=1}^d \int_\Omega u_iv_id{\bx} \quad\text{ and }\quad ((\bu,\bv)) = \sum_{i,j = 1}^d \int_\Omega \pdv{u_i}{x_j}\pdv{v_i}{x_j}d{\bx},
 \end{align}
which are associated with the norms $\norm{\bu}_{L^2} = \norm{\bu}_{L^2} = (\bu,\bu)^{1/2}$ and $\norm{\bv}_{V} = ((\bu,\bu))^{1/2}$. We note that the $V$ norm is the $H^1$ seminorm, which we use as the full norm for $V$ which is justified by the Poincar\'e inequality.

We now apply the Leray-Helmholtz projector, $P_\sigma$, to \cref{eq: NSE,eq: NSE AOT} to formally obtain the following equations for $\bu$ and $\bv$, respectively
\begin{align}
    \bu_t + B(\bu,\bu) + \nu A^\alpha \bu &=\vect{f},\label{eq: NSE functional}\\ 
    \bv_t + B(\bv,\bv) + \nu A^\alpha \bv &=\vect{f}+ \mu P_\sigma I_h(\bu - \bv).\label{eq: NSE AOT functional} 
\end{align}
Here we formally denote the nonlinear operator, $B(\vect{a},\vect{b}):= P_\sigma (\vect{a} \cdot \grad \vect{b})$, and the Stokes operator $A:= -P_\sigma \lap$. Both of these operators will be discussed in depth below. We assume without loss of generality that $P_\sigma\vect{f}= \vect{f}$, with the understanding that any portion of the forcing that is not divergence free has already been absorbed into the pressure gradient.

%%%% STOKES OPERATOR %%%%%%
The domain of the Stokes operator, $A$, is given by $\mathcal{D}(A) := H^2 \cap V$. $A$ is defined as the extension of the operator $-P_\sigma \lap$  to a linear operator from $V$ to $V'$, associated with the bilinear form 
\begin{align}
    \langle A\bu, \bv\rangle =  ((\bu,\bv)), \text{ for all } \bv\in V.
\end{align}
% On our domain $\Omega = \mathbf{T}^d, A = -\lap P_\sigma$. 
We note that $A$ has many well-known properties, including the fact that $A$ has non-decreasing eigenvalues $\lambda_k>0$, and subsequently enjoys the following Poincar\'e inequalities:
\begin{align}
    \lambda_1 \norm{\bu}_{L^2}^2 \leq \norm{\bu}_{V}^2 \text{ for } \bu \in V; \qquad \lambda_1\norm{\bu}_{V}^2 \leq \norm{A\bu}_{L^2}^2 \text{ for } \bu \in \mathcal{D}(A).
\end{align}
 For a more in-depth look at the various properties of $A$ see, e.g., \cite{Robinson_2001, Temam_1997_IDDS, Constantin_Foias_1988}.

We denote the fractional Laplacian via its Fourier symbol, i.e., 
\begin{align}
    (-\lap)^\alpha \bu(\bx) := \sum_{\bk\in \nZ^{d}}(2\pi \abs{\bk})^{2\alpha}\widehat{\bu}_\bk e^{2\pi i \bk\cdot\bx} ,
\end{align}
where $\widehat{\bu}_\bk$ denotes wavemode $\bk$ of $\widehat{\bu}$, i.e., the $\bk^{\text{th}}$ Fourier coefficient of $\bu$.  We also denote the Riesz derivative operator $\Lambda:=(-\lap)^{1/2}$.
\begin{remark}\label{As_notation}
    We use the somewhat non-standard notation
    \[
    A^s:=P_\sigma (-\lap)^s
    \]
for $s>0$, which arises naturally after applying $P_\sigma$ to \cref{eq: NSE}.  Note that this is distinct from the more standard meaning $(P_\sigma (-\lap))^s$, but it coincides with the standard notation for the Stokes operator when $s=1$.
\end{remark}
Using this (modified) Stokes operator, we define the fractional Sobolev space $V^s$ as the closure of $\mathcal{V}$ with respect to the inner product
\begin{align}
    (\bu,\bv)_{V^s} = (A^{s/2} \bu, A^{s/2}\bv),
\end{align}
which induces the norm\footnote{Note that, thanks to the Poincar\'e inequality \cref{poincare}, $\norm{\cdot}_{V^s}$ is a norm, rather than merely a semi-norm.} $\norm{\bu}_{V^s} = \norm{A^{s/2} \bu}_{L^2}$.
Thanks to the mean-free condition, we also have the following Poincar\'e inequalities on $\nT^n$:
\begin{align}\label{poincare}
    \lambda_1^\alpha \norm{\bu}_{L^2}^2 \leq \norm{\bu}_{V^\alpha}^2 \text{ and } \lambda_1^{\alpha - 1}\norm{\bu}_{V}^2 \leq \norm{\bu}_{V^\alpha}^2 \text{ for } \bu \in  V^\alpha.
\end{align}
We note that for the above to hold, we require $\alpha \geq 1$.

We use the following integration-by-parts type of calculation in terms of $A^\alpha$ frequently throughout this work.  It is slightly non-standard due to the mixture of the fractional power and the Leray projection operator, so we highlight it here.  Let $\bv,\bw\in V^{2\alpha}$.  Then, since $P_\sigma\bv=\bv$ and $P_\sigma\bw=\bw$,
\begin{align}
    \left\langle A^\alpha \bv, \bw \right\rangle 
    &= \notag
    \left\langle P_\sigma(-\lap)^\alpha \bv, \bw \right\rangle 
    = 
    % \left\langle (-\lap)^\alpha \bv, P_\sigma\bw \right\rangle 
    % \\&= \notag
    % \left\langle (-\lap)^\alpha \bv, \bw \right\rangle 
    % = 
    \left\langle (-\lap)^{\alpha/2} \bv, (-\lap)^{\alpha/2}\bw \right\rangle
    \\&= \notag 
    \left\langle P_\sigma(-\lap)^{\alpha/2} \bv, P_\sigma(-\lap)^{\alpha/2}\bw \right\rangle
    =
    \left\langle A^{\alpha/2}\bv, A^{\alpha/2}\bw \right\rangle,
\end{align}
where we used the symmetry of $P_\sigma$ and fact that $P_\sigma(-\lap)^{\alpha/2}=(-\lap)^{\alpha/2}P_\sigma$ (since both operators are Fourier multipliers).

In the remainder of this work, we write $C$, $C_a$, $C_{a,b}$, etc.,  to denote a positive constant depending on the indicated parameters which may change from line to line. %We typically denote the Poincar\'e constant $\lambda_1^{1-\alpha}$ as a constant $C_\alpha$.

%%%% NONLINEAR OPERATOR %%%%%%

The following bilinear term defined for $\bu, \bv \in \mathcal{V}$ by
\begin{align}
    B(\bu,\bv) := P_\sigma((\bu\cdot\nabla) \bv),
\end{align}
can be extended to a continuous map $B:V\times V\to V'$, which is associated with the trilinear form
\begin{align}
   b(\bu,\bv,\bw):=\langle B(\bu,\bv),\bw \rangle_{V^{-1}}.
\end{align}
In this work we will frequently use properties of the bilinear term which are contained for conciseness in the following lemma (see, e.g., \cite{Constantin_Foias_1988,Temam_2001_Th_Num,Foias_Manley_Rosa_Temam_2001} for details).
\begin{lemma}
Let $\bu,\bv$ and $\bw$ be vector fields mapping $\nT^d\rightarrow\nR^d$ for some $d \geq 2$. Then
    \begin{subequations}
      \begin{align}
      \label{symm1}
      b(\bu,\bv,\bw) &= -b(\bu,\bw,\bv), 
      \quad&\forall\;\bu, \bv, \bw\in V,\\
      \label{symm2}
      b(\bu,\bv,\bv) &= 0,
      \quad&\forall\;\bu, \bv, \bw\in V.
    \end{align}
  \end{subequations}
%   \begin{subequations}
%     \begin{align}
%       |\ip{B(\bu,\bv)}{\bw}_{V^{-1}}|
%       &\leq 
%       C\normL{\bu}^{1/2} \normH{\bu}^{1/2} \normH{\bv} \normL{\bw}^{1/2} \normH{\bw}^{1/2},
% %     \quad\text{ for all }\;\bu, \bv, \bw\in V,
%       % \\
%       % \label{B:442}
%       % |\ip{B(\bu,\bv)}{\bw}_{V^{-1}}|
%       % &\leq 
%       % C\normL{\bu}^{1/2} \normH{\bu}^{1/2} \normH{\bu}^{1/2} \normL{Av}^{1/2} \normL{\bw},
%     %\quad\text{ for all }\;\bu\in V, \bv\in D(\bA), \bw\in H
%       % \\
%       % \label{B:inf22}
%       % |\ip{B(\bu,\bv)}{\bw}_{V^{-1}}|
%       % &\leq 
%       % C\normL{\bu}^{1/2} \normL{Au}^{1/2} \normH{\bv} \normL{\bw},
%     \end{align}
% The above inequalities can be extended to hold for all $\bu$, $\bv$, $\bw$ in the largest spaces $H$, $V$, or $\mathcal{D}(A)$, for which the right-hand sides of the inequalities are finite.
%   \end{subequations}
Moreover, for $\alpha\geq \frac{d}{4}+\frac12$ and $\theta:= 1-\frac{d}{4\alpha}$,
    \begin{subequations}
        \begin{alignat}{3}
            &\abs{b(\bu,\bv,\bw)}       \label{B:inf22}
                \leq C 
                    \norm{\bu}_{L^\infty}
                    \norm{\bv}_{V}
                    \norm{\bw}_{L^2}, 
            &&\quad \forall \;\bu \in L^\infty, \bv\in V, \bw \in H, \\
            &\abs{b(\bu,\bv,\bw)}       \label{B:424}
                \leq C 
                    \norm{\bu}_{L^2}^{\theta}
                    \norm{\bu}_{V^\alpha}^{1-\theta}
                    \norm{\bv}_{V^1}
                    \norm{\bw}^{\theta}_{L^2}
                    \norm{\bw}^{1-\theta}_{V^\alpha}, 
            &&\quad \forall \;\bu \in V^\alpha, \bv \in V^\alpha, \bw \in V^\alpha,\\  
            &\abs{b(\bu,\bv,\bw)}       \label{B:442}
                \leq C 
                    \norm{\bu}_{V^\alpha}
                    \norm{\bv}_{V}^{\theta}
                    \norm{\bv}^{1-\theta}_{V^{\alpha + 1}}
                    \norm{\bw}_{L^2}, 
            &&\quad \forall \;\bu \in V^\alpha, \bv\in V^{\alpha + 1}, \bw \in H.
            % &\abs{b(\bu,\bv,\bw)}       \label{B:}
            %     \leq C 
            %         \norm{\bu}_{V}
            %         \norm{\bv}_{L^2}
            %         \norm{\bw}_{V}, 
            % &\forall \bu \in V^\alpha, \bv\in V^\alpha, \bw \in V^\alpha
        \end{alignat}
    \end{subequations}

\end{lemma}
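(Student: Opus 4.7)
The plan is to handle the symmetry identities \eqref{symm1}--\eqref{symm2} first, and then to derive the three H\"older/Sobolev-type estimates by a uniform strategy: apply H\"older's inequality with suitable exponents on $\int_\Omega (\bu\cdot\nabla\bv)\cdot\bw\,d\bx$, and conclude using Sobolev embedding and/or a single Gagliardo--Nirenberg interpolation into $L^4$.

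For \eqref{symm1}, I would first establish the identity on the dense class $\mathcal{V}\times\mathcal{V}\times\mathcal{V}$ of smooth divergence-free trigonometric polynomials, where integration by parts is entirely rigorous:
\begin{equation*}
b(\bu,\bv,\bw) + b(\bu,\bw,\bv)
= \int_\Omega \bu\cdot\nabla(\bv\cdot\bw)\,d\bx
= -\int_\Omega (\nabla\cdot\bu)(\bv\cdot\bw)\,d\bx
= 0,
\end{equation*}
since $\nabla\cdot\bu = 0$. Continuity of $b$ on $V\times V\times V$ together with density of $\mathcal{V}$ in $V$ then extends the identity, and \eqref{symm2} is the specialization $\bw=\bv$.

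The estimate \eqref{B:inf22} is immediate from H\"older's inequality with exponents $(\infty,2,2)$. For the remaining two bounds, the key ingredient is the Gagliardo--Nirenberg interpolation
\begin{equation*}
\norm{\bphi}_{L^4}\leq C\norm{\bphi}_{L^2}^{\theta}\norm{\bphi}_{V^\alpha}^{1-\theta},
\qquad \theta=1-\frac{d}{4\alpha},
\end{equation*}
whose scaling exponent is forced by $\frac{1}{4}=\frac{\theta}{2}+(1-\theta)\bigl(\frac{1}{2}-\frac{\alpha}{d}\bigr)$, and whose validity requires only the Sobolev embedding $V^\alpha\hookrightarrow L^4$, i.e.\ $\alpha\geq d/4$, comfortably implied by $\alpha\geq \frac{1}{2}+\frac{d}{4}$. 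For \eqref{B:424}, apply H\"older with exponents $(4,2,4)$ on $\int_\Omega(\bu\cdot\nabla\bv)\cdot\bw\,d\bx$ and apply the interpolation above to both $\bu$ and $\bw$, while the $L^2$ factor gives $\norm{\bv}_{V}$. For \eqref{B:442}, apply H\"older with exponents $(4,4,2)$; control $\bu$ by the pure Sobolev embedding $\norm{\bu}_{L^4}\leq C\norm{\bu}_{V^\alpha}$, and apply the same $L^4$ interpolation to $\nabla\bv$, obtaining $C\norm{\bv}_{V}^{\theta}\norm{\bv}_{V^{\alpha+1}}^{1-\theta}$; the Poincar\'e inequality \eqref{poincare} is available to absorb any lower-order remainder.

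The only genuine verification required is matching the Gagliardo--Nirenberg scaling exponent to the advertised $\theta=1-d/(4\alpha)$ in each case, which is the one-line computation above. Everything else reduces to standard H\"older, Sobolev embedding on $\mathbb{T}^d$, and Poincar\'e, so I do not anticipate any real obstacle; this lemma is a compact packaging of classical facts adapted to the fractional-diffusion setting in dimensions $2\leq d\leq 8$.
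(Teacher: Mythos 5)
Your proposal is correct and follows exactly the route the paper intends: the symmetry identities via integration by parts on the dense class $\mathcal{V}$, \eqref{B:inf22} by H\"older with exponents $(\infty,2,2)$, and \eqref{B:424}, \eqref{B:442} by H\"older with $(4,2,4)$ and $(4,4,2)$ combined with the Gagliardo--Nirenberg interpolation \eqref{Ladyzhenskaya}, which is precisely the tool the paper singles out for these two estimates (otherwise deferring to the standard references). Your scaling check $\frac{1}{4}=\frac{\theta}{2}+(1-\theta)\bigl(\frac{1}{2}-\frac{\alpha}{d}\bigr)$ confirming $\theta=1-\frac{d}{4\alpha}$ is the right verification, so nothing is missing.
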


We note that in proofs of \cref{B:424,B:442} and throughout the rest of this work we use the following form of the Gagliardo–Nirenberg-Sobolev interpolation inequality 
\begin{align}\label{Ladyzhenskaya}
\fournorm{\bu} \leq C\norm{\bu}_{L^2}^{\theta} \norm{\bu}_{V^\alpha}^{1-\theta},
\quad\text{for all}\,\, \bu\in V^\alpha,
\text{ with }\theta:= 1 - \frac{d}{4\alpha}.
\end{align}
%This particular inequality comes from Littlewood's inequality and the Sobolev Embedding Theorem.
(Note that, throughout the paper, we assume that $\alpha \geq \frac{d}{4} + \frac{1}{2}$, which implies that $1>\theta\geq\frac{2}{2+d}>0$.)

% $B: V\times V \to V'$, which is defined as the continuous extension of the nonlinear operator, $B(a,b) := P_\sigma (a \cdot \grad b)$, for $a,b$ smooth. $A$ is the Stokes operator, $A: \mathcal{D}(A) \to H$ defined by  $A(a) := (-P_\sigma\lap a)$, for $a$ smooth. Here  We note that $A$ can be extended to a linear operator from $V$ to $V'$ as
% \begin{align}
%     \langle A\bu, \bv \rangle = (\grad \bu, \grad \bv) \text{ for all } \bv \in V.
% \end{align}

% $B: V\times V \to V'$
% $A: \mathcal{D}(A) \to H$

% $\norm{B(a,b)}_{V^{-1}} \leq C\norm{a}_{V}\norm{b}_{V}$

In addition to these properties of the nonlinear term, we also will utilize the following version of the Kato-Ponce inequality, given in \cite{bae2015gevrey}.

\begin{lemma}[Kato-Ponce Inequality \cite{bae2015gevrey}]
For $1 \leq r < \infty$, $p_i,q_i \neq 1$ such that $\frac{1}{r} = \frac{1}{p_1} + \frac{1}{q_1} = \frac{1}{p_2} + \frac{1}{q_2}$, we have
    \begin{align}
        \norm{\Lambda^s(fg)}_{L^r} \label{ineq:Kato-Ponce}
        \leq 
                C 
                \left(
                        \norm{\Lambda^sf}_{L^{p_1}}
                        \norm{g}_{L^{q_1}}
                    +
                        \norm{f}_{L^{p_2}}
                        \norm{\Lambda^sg}_{L^{q_2}}
                \right),
    \end{align}
    where $C$ is a constant depending on $p_i$, $q_i$, $r$ and $\Omega$.
\end{lemma}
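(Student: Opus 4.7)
The plan is to establish \cref{ineq:Kato-Ponce} via a Littlewood--Paley decomposition combined with Bony's paraproduct formula. Let $\{\Delta_j\}_{j\geq -1}$ denote the Littlewood--Paley projectors on $\nT^d$, and let $S_j := \sum_{k\leq j-1}\Delta_k$ be the associated low-frequency cutoffs. First I would split the product as
\[
fg = T_f g + T_g f + R(f,g),
\]
where $T_f g := \sum_j S_{j-2}f\,\Delta_j g$ is the ``low--high'' paraproduct, $T_g f := \sum_j S_{j-2}g\,\Delta_j f$ is the symmetric ``high--low'' piece, and $R(f,g) := \sum_{|j-k|\leq 2}\Delta_j f\,\Delta_k g$ is the resonant remainder. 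Applying $\Lambda^s$ to each piece reduces the problem to three separate estimates.

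For the paraproduct $T_f g$, each summand $S_{j-2}f\,\Delta_j g$ is spectrally localized on a dyadic annulus of size $2^j$, so in a Littlewood--Paley sense $\Lambda^s$ acts essentially as multiplication by $2^{js}$. Using the Fefferman--Stein vector-valued maximal inequality (which requires $p_2,q_2>1$) in conjunction with the Littlewood--Paley square-function characterization of $L^r$, and then H\"older with $\tfrac{1}{r}=\tfrac{1}{p_2}+\tfrac{1}{q_2}$, one obtains
\[
\norm{\Lambda^s T_f g}_{L^r} \leq C\norm{f}_{L^{p_2}}\norm{\Lambda^s g}_{L^{q_2}}.
\]
A symmetric argument, interchanging the roles of $f$ and $g$ and using the exponent pair $(p_1,q_1)$, shows that the piece $T_g f$ contributes $C\norm{\Lambda^s f}_{L^{p_1}}\norm{g}_{L^{q_1}}$.

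The main obstacle, as is typical in paraproduct arguments, is the resonant term $R(f,g)$: here the two input frequencies are comparable, so the spectrum of $\Delta_j f\,\Delta_k g$ (for $|j-k|\leq 2$) is no longer confined to a single dyadic annulus, and the clean identification $\Lambda^s\sim 2^{js}$ on each summand breaks down. My plan would be to use a Bernstein-type estimate to shift $\Lambda^s$ onto one of the two factors at the cost of a $2^{js}$ factor, and then redistribute via H\"older and a discrete Hardy-type summation in $j$. The hypothesis $p_i,q_i\neq 1$ enters precisely here, since $\ell^2$-summability of the dyadic pieces in $L^p$ fails at the endpoint $p=1$. Adding the three contributions yields \cref{ineq:Kato-Ponce}.

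A final bookkeeping point concerns the passage to the torus $\nT^d$: the zero mode must be absorbed into an inhomogeneous low-frequency block, and the dyadic sums are truncated below by $j\geq -1$ rather than extending to $-\infty$. This truncation is why the constant $C$ is allowed to depend on $\Omega$ in the statement. An alternative route would be to invoke the Coifman--Meyer bilinear multiplier theorem directly on the symbol of $\Lambda^s(fg) - f\Lambda^s g - g\Lambda^s f$, but this requires more machinery and ultimately gives the same result, so I would opt for the paraproduct approach as the cleanest path.
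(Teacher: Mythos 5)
The paper does not actually prove this lemma: it is quoted verbatim from \cite{bae2015gevrey} and used as a black box, so there is no in-paper argument to compare yours against. Your paraproduct outline is the standard proof of the fractional Leibniz rule, and the overall plan is sound: Bony decomposition $fg=T_fg+T_gf+R(f,g)$, frequency localization of each paraproduct summand to a dyadic annulus so that $\Lambda^s$ acts like $2^{js}$, the Fefferman--Stein maximal inequality together with the Littlewood--Paley square-function characterization and H\"older for the two paraproducts, and a separate treatment of the resonant piece whose summands live in balls rather than annuli. Your remarks on the periodic setting (absorbing the zero mode into the inhomogeneous low-frequency block, truncating the dyadic sum at $j=-1$) are also correct and account for the $\Omega$-dependence of the constant.

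Two caveats, both of which a complete write-up would need to address. First, your handling of $R(f,g)$ silently uses $s>0$: after moving $\Lambda^s$ onto one factor, the reassembly over output frequencies is a discrete convolution against $2^{-ms}$ for $m\geq 0$, which is summable only when $s>0$. The lemma as printed does not display this hypothesis, but it is genuinely required (and harmless here, since the paper only invokes the inequality with $s=\alpha$ or $s=\alpha/2$, both positive); you should state it explicitly. Second, the statement permits $r=1$ (e.g.\ $p_1=q_1=2$), and at that endpoint the step $\norm{\sum_j u_j}_{L^r}\leq C\norm{(\sum_j\abs{u_j}^2)^{1/2}}_{L^r}$ for annulus-supported $u_j$ fails; one must either route the argument through the Hardy space $H^1$ or restrict to $1<r<\infty$. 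This again does not affect the paper, where every application has $r=4/3$ or $r=2$, but the proof as sketched does not cover the full range of exponents claimed in the statement.
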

%We note that here $\Lambda$ is the Riesz derivative operator, given such that $A^{\frac{s}{2}} = P_\sigma\Lambda^s $.

% We will utilize \cref{ineq:Kato-Ponce} particularly for $\Lambda^{s} = A^{s/2}$.

Lastly, we state theorems on the global well-posedness of weak and classical solutions. We note that the global well-posedness of weak solutions was first proved in \cite{Lions_1959} (see Remarque 6.11) and is also proved in \cite{wu2003generalized} for the generalized MHD equations. Moreover the proof of these theorems can be derived in this work by simply setting $\mu = 0$ in the proofs in the next section.f
\begin{theorem}[Global well-posedness of weak solutions for NSE with fractional diffusion\cite{Lions_1959,wu2003generalized}]
    If $\bu_0 \in H$, with time-independent forcing $f\in H$ and $\alpha \geq \frac{d}{4} + \frac{1}{2}$ then \cref{eq: NSE functional} admits a weak solution $\bu\in C_w(0,T;H)\cap L^2(0,T;V^\alpha)$ and $\frac{d\bu}{dt} \in L^\frac{4\alpha}{d}(0,T;V^{-\alpha})$ that solves 
\cref{eq: NSE functional} in the sense of $L^\frac{4\alpha}{d}(0,T;V^{-\alpha})$.
\end{theorem}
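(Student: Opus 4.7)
The plan is a standard Galerkin--compactness construction combined with a Lions-type uniqueness estimate; equivalently, one may set $\mu = 0$ in the well-posedness proof of \cref{eq: NSE AOT functional} that will be carried out in Section 3. I would set up Galerkin approximations $\bu_n$ in the span of the first $n$ eigenfunctions of $A$, satisfying $\bu_{n,t} + P_n B(\bu_n,\bu_n) + \nu A^\alpha \bu_n = P_n\bbf$ with $\bu_n(0) = P_n \bu_0$; local solvability is immediate from Picard. Testing with $\bu_n$, exploiting $b(\bu_n,\bu_n,\bu_n)=0$ via \cref{symm2}, and absorbing the forcing by Cauchy--Schwarz/Young yields the energy inequality $\frac{d}{dt}\|\bu_n\|_{L^2}^2 + \nu\|\bu_n\|_{V^\alpha}^2 \leq \nu^{-1}\lambda_1^{-\alpha}\|\bbf\|_{L^2}^2$, and hence uniform bounds for $\bu_n$ in $L^\infty(0,T;H) \cap L^2(0,T;V^\alpha)$, giving global existence at the Galerkin level.

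For the time-derivative estimate, I would dualize the nonlinearity: for $\bphi \in V^\alpha$, integrating by parts gives $|b(\bu_n,\bu_n,\bphi)| = |b(\bu_n,\bphi,\bu_n)| \leq \|\bu_n\|_{L^4}^2 \|\bphi\|_V \leq C\|\bu_n\|_{L^2}^{2\theta}\|\bu_n\|_{V^\alpha}^{2(1-\theta)}\|\bphi\|_{V^\alpha}$, using \cref{Ladyzhenskaya} with $\theta = 1 - d/(4\alpha)$ and the Poincar\'e inequality \cref{poincare} on $\bphi$. Raising $\|B(\bu_n,\bu_n)\|_{V^{-\alpha}}$ to the $4\alpha/d$ power makes the exponent on $\|\bu_n\|_{V^\alpha}$ equal to exactly $2$, so the nonlinearity---and trivially the linear terms---lies uniformly in $L^{4\alpha/d}(0,T;V^{-\alpha})$. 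Aubin--Lions then extracts a subsequence converging weakly in $L^2(V^\alpha)$, weakly-$\ast$ in $L^\infty(H)$, and strongly in $L^2(H)$; the strong convergence passes through the trilinear form, while the bound on $\bu_t$ gives weak continuity of $\bu$ into $H$ and recovery of the initial datum.

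For uniqueness, setting $\bw = \bu_1 - \bu_2$, testing the difference equation with $\bw$, and using \cref{symm1,symm2} yields $\frac{1}{2}\frac{d}{dt}\|\bw\|_{L^2}^2 + \nu\|\bw\|_{V^\alpha}^2 = b(\bw,\bw,\bu_1)$. I would bound the right-hand side via \cref{B:424} together with the interpolation $\|\bw\|_V \leq C\|\bw\|_{L^2}^{1-1/\alpha}\|\bw\|_{V^\alpha}^{1/\alpha}$, then apply Young's inequality tuned to absorb $\|\bw\|_{V^\alpha}^2$ into the dissipation. The residual factor multiplying $\|\bw\|_{L^2}^2$ is $\|\bu_1\|_{L^2}^{2\theta/a}\|\bu_1\|_{V^\alpha}^{2(1-\theta)/a}$ with $a = \theta + 1 - 1/\alpha$; a direct computation shows $2(1-\theta)/a \leq 2$ precisely when $\alpha \geq d/4 + 1/2$, so this factor lies in $L^1(0,T)$ by the energy estimates, and Gronwall forces $\bw \equiv 0$.

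The main obstacle is this critical Gagliardo--Nirenberg balance: both the $L^{4\alpha/d}$ integrability of $\bu_t$ and the uniqueness closure rest on the identity $\alpha\theta = \alpha - d/4 \geq 1/2$, which is exactly the borderline case permitted by the hypothesis $\alpha \geq d/4 + 1/2$. Everything else---Galerkin solvability, the linear-term estimates, the passage to the limit once compactness is in hand---proceeds in routine fashion; the delicate point is simply tracking which exponents line up against the available $L^\infty(H) \cap L^2(V^\alpha)$ regularity.
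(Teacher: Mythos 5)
Your proposal takes essentially the same route as the paper: the paper establishes this theorem by citing Lions/Wu and observing that it is the $\mu=0$ specialization of the Galerkin--energy--Aubin--Lions argument carried out for the nudged system in Section 3, which is precisely your construction, with the same $L^\infty(0,T;H)\cap L^2(0,T;V^\alpha)$ bounds and the same dualization of the nonlinearity giving $\bu_t\in L^{4\alpha/d}(0,T;V^{-\alpha})$. Your uniqueness paragraph has the correct exponent bookkeeping (and is the classical Lions computation), but it is not required here, since the statement only asserts existence of a weak solution.
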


\begin{theorem}[Global well-posedness of classical solutions for NSE with fractional diffusion \cite{wu2003generalized}]\label{thm - classical Existence of u}
If $\bu_0\in V^{2\alpha}$, with time-independent forcing, $\bbf \in H$ and  $\alpha \geq \frac{d}{4} + \frac{1}{2}$ then \cref{eq: NSE functional} admits a unique classical solution, $\bu \in C([0,T];V^{2\alpha})\cap L^2(0,T;V^{3\alpha})$.
\end{theorem}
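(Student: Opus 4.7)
The approach I would take is the classical Faedo--Galerkin scheme, with the key work concentrated in obtaining a priori bounds at three successive levels of regularity.  Let $P_n$ denote the orthogonal projection onto the span of the first $n$ eigenfunctions of $A$, and consider the Galerkin truncation $(\bu_n)_t + P_nB(\bu_n,\bu_n) + \nu A^\alpha\bu_n = P_n\bbf$ with $\bu_n(0)=P_n\bu_0$.  Standard ODE theory gives local-in-time existence of $\bu_n$; uniform-in-$n$ a priori bounds up to the $V^{2\alpha}$ level will give both global existence and the regularity claimed.

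First I would pair with $\bu_n$ and use \cref{symm2} to kill the nonlinear term, yielding the standard energy bound $\bu_n\in L^\infty(0,T;H)\cap L^2(0,T;V^\alpha)$.  Next, pairing with $A^\alpha\bu_n$ produces
\[
\tfrac{1}{2}\tfrac{d}{dt}\|\bu_n\|_{V^\alpha}^2 + \nu\|\bu_n\|_{V^{2\alpha}}^2 = -b(\bu_n,\bu_n,A^\alpha\bu_n) + \langle\bbf,A^\alpha\bu_n\rangle.
\]
The trilinear term is estimated via \cref{B:442} with $\bw=A^\alpha\bu_n$ (noting $\|A^\alpha\bu_n\|_{L^2}=\|\bu_n\|_{V^{2\alpha}}$); interpolating $\|\bu_n\|_{V^{\alpha+1}}$ between $\|\bu_n\|_{V^\alpha}$ and $\|\bu_n\|_{V^{2\alpha}}$ yields a bound of the form $C\|\bu_n\|_{V^\alpha}^{a}\|\bu_n\|_{V^{2\alpha}}^{b}$ in which the constraint $\alpha\geq\tfrac{d}{4}+\tfrac12$ is precisely what forces $b<2$.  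Young's inequality then absorbs the $V^{2\alpha}$ factor into the dissipation, and Gronwall (combined with the previous $L^2(V^\alpha)$ bound) gives $\bu_n\in L^\infty(0,T;V^\alpha)\cap L^2(0,T;V^{2\alpha})$.

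To bootstrap to the $V^{2\alpha}$ level I would pair with $A^{2\alpha}\bu_n$.  Using the integration-by-parts calculation given in the excerpt together with the pressure orthogonality (which follows since $A^{2\alpha}\bu_n$ is divergence-free), the nonlinear contribution reduces to $\langle\Lambda^\alpha(\bu_n\cdot\nabla\bu_n),\Lambda^{3\alpha}\bu_n\rangle$.  Cauchy--Schwarz followed by the Kato--Ponce inequality \cref{ineq:Kato-Ponce} with $s=\alpha$, choosing H\"older exponents so as to exploit the Sobolev embedding $V^{2\alpha}\hookrightarrow L^\infty$ (which holds since $2\alpha>d/2$ under our hypothesis), and further interpolation among $V^\alpha$, $V^{2\alpha}$, and $V^{3\alpha}$, yields a nonlinear bound in which the $V^{3\alpha}$ factor appears with exponent strictly less than $2$.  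Young's inequality absorbs this factor into the dissipation, and Gronwall applied on top of the bounds from the previous step gives $\bu_n\in L^\infty(0,T;V^{2\alpha})\cap L^2(0,T;V^{3\alpha})$ uniformly in $n$.

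With these uniform bounds in hand, standard Aubin--Lions compactness (combined with a dual-space bound on $(\bu_n)_t$ read off directly from the Galerkin equation) yields strong convergence along a subsequence in $L^2(0,T;V^{3\alpha-\varepsilon})$ for any $\varepsilon>0$, which suffices to pass to the limit in $B(\bu_n,\bu_n)\to B(\bu,\bu)$ and identify the limit as a solution of \cref{eq: NSE functional}.  Continuity $\bu\in C([0,T];V^{2\alpha})$ follows from a Lions--Magenes-type lemma using $\bu\in L^2(V^{3\alpha})$ and $\bu_t\in L^2(V^\alpha)$, and uniqueness is obtained from a standard energy estimate on the difference of two solutions using the bilinear estimates already established.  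The main obstacle throughout is the third a priori estimate: obtaining the nonlinear bound in a form compatible with Young's inequality requires a careful combination of Kato--Ponce, Sobolev embedding, and interpolation, and it is precisely the constraint $\alpha\geq\tfrac{d}{4}+\tfrac12$ that makes the resulting algebra close.
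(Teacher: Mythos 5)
Your overall strategy (Galerkin truncation plus a hierarchy of a priori estimates, then compactness) is the same one the paper relies on; note the paper does not prove this theorem directly but cites \cite{wu2003generalized} and obtains it by setting $\mu=0$ in the proofs of \Cref{thm: weak} and \Cref{thm - classical Existence}. However, there is a genuine gap in your second a priori estimate. After pairing with $A^\alpha\bu_n$, applying \cref{B:442} with $\bw=A^\alpha\bu_n$, bounding $\norm{\bu_n}_{V}\leq C\norm{\bu_n}_{V^\alpha}$, and interpolating $\norm{\bu_n}_{V^{\alpha+1}}\leq\norm{\bu_n}_{V^\alpha}^{1-1/\alpha}\norm{\bu_n}_{V^{2\alpha}}^{1/\alpha}$, you arrive at $C\norm{\bu_n}_{V^\alpha}^{a}\norm{\bu_n}_{V^{2\alpha}}^{b}$ with $a+b=3$ and $b=1+\tfrac{d}{4\alpha^2}>1$. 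Young's inequality then yields $\varepsilon\norm{\bu_n}_{V^{2\alpha}}^2+C_\varepsilon\norm{\bu_n}_{V^\alpha}^{2a/(2-b)}$, and since
\begin{align*}
\frac{2a}{2-b}-2=\frac{2(a+b)-4}{2-b}=\frac{2}{2-b}>2,
\end{align*}
the resulting Gr\"onwall factor $\norm{\bu_n}_{V^\alpha}^{2/(2-b)}$ is \emph{not} controlled by the $L^2(0,T;V^\alpha)$ bound from the energy estimate, which only gives integrability of $\norm{\bu_n}_{V^\alpha}^{2}$. The same obstruction appears if you instead use Kato--Ponce at this level as the paper does: the factor there is $\norm{\bu}_{V}^{1/\theta}+\norm{\bu}_{V}^{2/\theta}$, and interpolating $\norm{\bu}_V$ between $H$ and $V^\alpha$ shows it is integrable from the energy estimate alone only when $\alpha\geq\frac{d}{4}+1$. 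So the jump from the $H$-level estimate directly to the $V^\alpha$-level estimate fails to close precisely in the borderline range $\frac{d}{4}+\frac12\leq\alpha<\frac{d}{4}+1$, which contains the main case $d=3$, $\alpha=\frac54$.

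The missing ingredient is an intermediate estimate, which is exactly what the paper supplies: pairing with $A\bu$, writing the nonlinearity as $\sum_{i,j,k}\int \partial_{x_k}u_i\,\partial_{x_i}u_j\,\partial_{x_k}u_j\,d\bx$, and bounding $\norm{\nabla\bu}_{L^3}^3$ by Gagliardo--Nirenberg gives a Gr\"onwall factor $\norm{\bu}_{V^\alpha}^{2a_2/a_1}$ with $a_2/a_1\leq1$ exactly when $\alpha\geq\frac{d}{4}+\frac12$ (with a separate Agmon-based estimate for $\alpha\geq\frac{d}{2}+1$); this factor is integrable by H\"older, yielding $\bu\in L^\infty(0,T;V)\cap L^2(0,T;V^{\alpha+1})$. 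Only with $\norm{\bu}_{V}\in L^\infty(0,T)$ in hand does the $V^\alpha$-level estimate close, after which the induction up to $L^\infty(0,T;V^{2\alpha})\cap L^2(0,T;V^{3\alpha})$ proceeds essentially as you describe. Your first and third steps and the compactness, continuity, and uniqueness arguments are fine once this intermediate level is inserted.
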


\section{Existence and Uniqueness of Classical Solutions}\label{well posedness}
\noindent
Now we move to establish the global well-posedness of solutions to \cref{eq: F AOT}. To do so we will first prove the existence of weak solutions.

% \subsection{Existence of Weak Solutions}
\begin{theorem}\label{thm: weak}
For initial data $\bv_0 \in V^\alpha$, time-independent forcing $\bbf\in H$, and $\alpha \geq \frac{d}{4} + \frac{1}{2}$, given any $\bu \in C([0,T];V^{2\alpha})\cap L^2(0,T;V^{3\alpha})$, the system \cref{eq: NSE AOT functional} 
admits a weak solution $\bv\in C_w(0,T;H)\cap L^2(0,T;V^\alpha)$ and $\frac{d\bv}{dt} \in L^\frac{4\alpha}{d}(0,T;V^{-\alpha})$ that solves 
\cref{eq: NSE AOT functional} in the sense of $L^\frac{4\alpha}{d}(0,T;V^{-\alpha})$.
\end{theorem}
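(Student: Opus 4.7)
\medskip

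\noindent\textbf{Proof proposal.} The plan is to proceed by a standard Galerkin approximation, obtain uniform bounds, pass to a limit via Aubin--Lions compactness, and verify the initial data. Let $\{\bw_k\}$ be the orthonormal eigenbasis of $A$ in $H$, and let $P_n$ denote the orthogonal projection onto $\mathrm{span}\{\bw_1,\dots,\bw_n\}$. Seek $\bv_n(t)\in P_nH$ solving
\begin{align*}
\frac{d\bv_n}{dt} + P_n B(\bv_n,\bv_n) + \nu A^\alpha \bv_n = P_n\bbf + \mu P_nP_\sigma I_h(\bu-\bv_n),
\qquad \bv_n(0)=P_n\bv_0.
\end{align*}
Since the right-hand side is locally Lipschitz on $P_n H$, the Picard--Lindel\"of theorem yields a unique local-in-time solution; the energy estimate below will preclude blow-up in finite time.

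Next I would derive the basic energy estimate by pairing the Galerkin equation with $\bv_n$. The trilinear term vanishes by \cref{symm2}. For the nudging term, split $I_h(\bu-\bv_n)=\bigl(I_h(\bu-\bv_n)-(\bu-\bv_n)\bigr)+(\bu-\bv_n)$, apply \cref{eq: interp} with $s=0$, and use Cauchy--Schwarz and Young's inequality to bound
\begin{align*}
\mu\bigl|(P_\sigma I_h(\bu-\bv_n),\bv_n)\bigr|
\leq \tfrac{\nu}{2}\|\bv_n\|_{V^\alpha}^2 + C_{\mu,\nu,h}\|\bv_n\|_{L^2}^2 + C_{\mu,h}\bigl(\|\bu\|_{V^\alpha}^2+\|\bu\|_{L^2}^2\bigr),
\end{align*}
absorbing the gradient piece into the diffusive term. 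After absorbing and integrating, Gronwall's inequality (with $\bu\in L^\infty(0,T;V^\alpha)$ from the hypothesis) yields uniform bounds
\begin{align*}
\bv_n \in L^\infty(0,T;H)\cap L^2(0,T;V^\alpha),
\end{align*}
independent of $n$, which also implies global existence of each $\bv_n$ on $[0,T]$.

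For the time derivative, I would estimate each term on the right-hand side in $V^{-\alpha}$. The linear pieces $\nu A^\alpha\bv_n$, $P_n\bbf$, and $\mu P_n P_\sigma I_h(\bu-\bv_n)$ lie in $L^2(0,T;V^{-\alpha})$ uniformly, using \cref{eq: interp} and the embedding $H\hookrightarrow V^{-\alpha}$ for the latter. The nonlinear term is the bottleneck: since $\alpha\geq 1$, the symmetry $b(\bv_n,\bv_n,\phi)=-b(\bv_n,\phi,\bv_n)$ combined with H\"older and \cref{Ladyzhenskaya} gives
\begin{align*}
\|B(\bv_n,\bv_n)\|_{V^{-\alpha}} \leq C\|\bv_n\|_{L^4}^2 \leq C\|\bv_n\|_{L^2}^{2\theta}\|\bv_n\|_{V^\alpha}^{2(1-\theta)} = C\|\bv_n\|_{L^2}^{2\theta}\|\bv_n\|_{V^\alpha}^{d/(2\alpha)},
\end{align*}
which, after integration in time, is bounded in $L^{4\alpha/d}(0,T;V^{-\alpha})$ uniformly in $n$ (the exponent $4\alpha/d$ is exactly what matches the $L^2_tV^\alpha$ bound). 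Hence $\tfrac{d\bv_n}{dt}$ is uniformly bounded in $L^{4\alpha/d}(0,T;V^{-\alpha})$.

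With these uniform bounds, a subsequence (not relabeled) converges weakly-$*$ in $L^\infty(0,T;H)$, weakly in $L^2(0,T;V^\alpha)$, and with time derivative weakly in $L^{4\alpha/d}(0,T;V^{-\alpha})$; the Aubin--Lions lemma, using $V^\alpha\hookrightarrow\hookrightarrow H\hookrightarrow V^{-\alpha}$, gives strong convergence in $L^2(0,T;H)$. Passing to the limit in the Galerkin system is routine: the nonlinear term is handled by the strong $L^2_tH$ convergence in the standard way, the linear diffusion and forcing terms pose no issue, and the nudging term passes to the limit by splitting $I_h(\bu-\bv_n)=(I_h(\bu-\bv_n)-(\bu-\bv_n))+(\bu-\bv_n)$ and using that the first summand is bounded $V^\alpha\to L^2$ (so preserves weak convergence) while the second converges strongly. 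The initial condition $\bv(0)=\bv_0$ follows by integrating against time-dependent test functions and using the weak continuity $\bv\in C_w(0,T;H)$ inherited from the regularity pair. I expect the main obstacle to be a careful treatment of the nudging term, both in the energy estimate (balancing $\mu$, $h$, and $\nu$ so that the $V^\alpha$ seminorm can be absorbed without requiring smallness of $h$) and in the limit passage (since $I_h$ is only characterized through \cref{eq: interp} and need not be bounded on $H$ a priori).
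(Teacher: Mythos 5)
Your proposal follows essentially the same route as the paper: Galerkin approximation, energy estimate with the trilinear term vanishing and the nudging term absorbed via \cref{eq: interp} under $\mu c_{0,\alpha}h^2\lesssim\nu$, a uniform $L^{4\alpha/d}(0,T;V^{-\alpha})$ bound on $\frac{d\bv_n}{dt}$ from the $L^4$ interpolation of the nonlinearity, and Aubin--Lions to pass to the limit. The only (harmless) deviations are cosmetic --- you split $I_h(\bu-\bv_n)$ against $\bu-\bv_n$ rather than treating $I_h(\bu)$ and $I_h(\bv_n)$ separately, and your remark that $I_h$ is only bounded $V^\alpha\to L^2$ in the limit passage is a correct and slightly more careful articulation of a step the paper labels straightforward.
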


\begin{proof}
To prove this we consider the associated Galerkin system:
\begin{align}\label{galerkin system}
    \dv{\bvn}{t} + P_nB(\bvn, \bvn) + \nu A^\alpha \bvn = P_n\vect{f}+ \mu P_n P_\sigma I_h(\bu - \bvn).
\end{align}
Here $P_n: H\to H_n$ is given by $P_n\bv = \sum_{j=1}^n(\bw_j,\bv)\bw_j$, where $\bw_j$ denotes the $j$th eigenfunction of $A$ and $H_n = \text{span}(\bw_1,...,\bw_n)$.
We look for a solution $ \bvn  \in C^1([0,T_n), H_n)$.
% Note that \bvn is assumed to be a linear combination of C(t)w_i(\bx), where w_i(\bx) denote the basis elements of H_n. Thus
We note that the unknowns (the Fourier coefficients) on the right-hand side form a quadratic polynomial.  Hence, according to the Picard-Lindel\"of Theorem, a unique solution exists locally on the time interval $[0,T_n)$, as the Galerkin system reduces to a system of ODEs that is quadratic in the coefficients of $\bw_j$. Let $T_n^{\text{max}}$ be given to be the maximal interval of existence and uniqueness.

Now we prove that $T_n^{\text{max}} = T$ for any $T>0$. To do so, we simply take the inner product of \cref{galerkin system} with $\bvn$. This yields the following equations with $\vect{f_n} := P_n \vect{f}$.
\begin{align}
    \frac{1}{2}\dv{}{t}\norm{\bvn}^2_{L^2} + \nu \norm{A^{\alpha/2} \bvn}_{L^2}^2 = \langle \vect{f_n},\bvn\rangle + \mu (I_h(\bu),\bvn) - \mu (I_h(\bvn),\bvn).
\end{align}

We note that above the nonlinear term vanishes due to \cref{symm2}. We now move to estimate the terms on the right-hand side.
\begin{align}
\langle \vect{f_n},\bvn\rangle &\leq \frac{1}{\mu}\norm{\vect{f_n}}_{L^2}^2 + \frac{\mu}{4}\norm{\bvn}_{L^2}^2
                        \leq \frac{1}{\mu}\norm{\vect{f}}_{L^2}^2 + \frac{\mu}{4}\norm{\bvn}_{L^2}^2. \notag
    % \langle \vect{f_n},\bvn\rangle + \mu (I_h(\bu),\bvn) & =  (\vect{f_n} + \mu (I_h(\bu),\bvn)\\
                                                  % &\leq \frac{\mu} \mu \vect{f_n} + \mu (I_h(\bu) \bvn)\\
\end{align}
For the feedback control terms, we estimate
\begin{align}
\mu (I_h(\bu),\bvn) & \leq C\mu\norm{I_h(\bu)}_{L^2}^2 + \frac{\mu}{4}\norm{\bvn}_{L^2}^2,
\end{align}
and also
\begin{align}
    - \mu (I_h(\bvn),\bvn)  &= \mu (\bvn - I_h(\bvn), \bvn) - \mu(\bvn,\bvn)\\
                            &\leq \mu \norm{\bvn - I_h(\bvn)}_{L^2}\norm{\bvn}_{L^2} - \mu \norm{\bvn}^2_{L^2}\notag \\
                            &\leq \frac{\mu}{2} \norm{\bvn - I_h(\bvn)}_{L^2}^2 + \frac{\mu}{2}\norm{\bvn}^2_{L^2} - \mu \norm{\bvn}^2_{L^2}\notag \\
                            &\leq \frac{\mu c_{0,\alpha} h^2}{2} \norm{A^{\alpha/2}\bvn}_{L^2}^2  - \frac{\mu}{2}\norm{\bvn}^2_{L^2}\notag \\\notag
                            &\leq \frac{\nu}{2} \norm{A^{\alpha/2}\bvn}_{L^2}^2  - \frac{\mu}{2}\norm{\bvn}^2_{L^2}.
\end{align}

% Now we can use the symmetric property to eliminate the nonlinear term and apply H\"older's inequality while adding and subtracting $\bvn$ into the right most term.

% \begin{align}
%     \frac{1}{2}\dv{}{t}\norm{\bvn}^2_{L^2}  + \nu \norm{A^{\alpha/2} \bvn}_{L^2}^2 \leq \frac{\mu}{2}\norm{I_h(\bu)}^2_{L^2} + \frac{\mu}{2} \norm{\bvn}^2_{L^2} + \mu (\bvn - I_h(\bvn),\bvn) - \mu \norm{\bvn}^2_{L^2}
% \end{align}

% Applying H\"older's inequality once more and eliminating common terms yields the following:

% \begin{align}
%     \frac{1}{2}\dv{}{t}\norm{\bvn}^2_{L^2}  + \nu \norm{A^{\alpha/2} \bvn}_{L^2}^2 \leq \frac{\mu}{2}\norm{I_h(\bu)}^2_{L^2}  + \frac{\mu}{2}\norm{\bvn - I_h(\bvn)}^2_{L^2}
% \end{align}

% Now by assumption we have 
% \begin{align}
%     \frac{\mu}{2}\norm{\bvn - I_h(\bvn)}^2_{L^2} \leq \frac{\mu c_\alpha h^{2}}{2}\norm{A^{1/2} \bvn}^2_{L^2}\leq \frac{\mu c_\alpha h^{2}\lambda_1^{1-\alpha}}{2}\norm{A^{\alpha/2} \bvn}^2_{L^2} \leq \frac{\nu}{2}\norm{A^{\alpha/2}\bvn}^2_{L^2}
% \end{align}
% % \todo[inline]{\collin{\collin{Note here we have used Poincar\'e to exchange a bound in $V$ with $V^\alpha$. If we assume such a bound in $V^\alpha$ we can extend this to the full domain.}}}
Since $\bu \in C([0,T];V^{2\alpha})$ there exists an $M_0>0$ such that $\norm{\bu}^2_{L^2}\leq M_0$ for all time $t\in [0,T]$ for any $T>0$. Moreover as $\bbf\in H$ and $I_h$ is a linear operator it follows that there exists an $M\geq M_0$ such that $\frac{\mu}{2}\norm{I_h(\bu)}^2_{L^2} + \frac{1}{\mu}\norm{\vect{f}}^2_{L^2} \leq M$ for all $t\in [0,T]$.
Hence, we obtain,
\begin{align}
    \frac{1}{2}\dv{}{t}\norm{\bvn}^2_{L^2}  + \frac{\nu}{2} \norm{A^{\alpha/2} \bvn}_{L^2}^2 \leq \frac{\mu}{2}\norm{I_h(\bu)}^2_{L^2} + \frac{1}{\mu}\norm{\vect{f}}^2_{L^2} \leq M.
\end{align}

Now we integrate the equation from $0$ to $t$ to obtain:
\begin{align}
    \norm{\bvn(t)}^2_{L^2} + \nu \int_0^t \norm{A^{\alpha/2} \bvn}^2_{L^2}ds &\leq \norm{\bvn(0)}^2_{L^2} + \int_0^t Mds \\ \notag
    &\leq \norm{\bv_0}^2_{L^2} + MT.
\end{align}
We note that the above holds for any choice of $t \in [0,T_n^{\text{max}})$ as $t$ was arbitrary. We note that if $T_{n}^{\text{max}} < \infty$, then there must exists a time $T_n^{\text{max}}$ such that 
\begin{align}
\lim_{t \to T_n^{\text{max}}} \norm{\bvn(t)}^2_{L^2} = \infty.
\end{align}
This cannot be the case, as we established earlier that $\norm{\bvn(t)}^2_{L^2}$ is uniformly bounded.  This implies that we have $T_n^{\text{max}} = \infty$. Hence $\bvn$ is uniformly bounded (with respect to $n$) in $L^\infty(0,T;H)$ and $L^2(0,T;V^\alpha)$.

We wish to apply Aubin-Lions Compactness Theorem to extract classically convergent subsequences, however to do so we must first bound $\dv{\bvn}{t}$.
\begin{align}
&\quad
  \norm{\dv{\bvn}{t}}_{V^{-1}} 
  \\&\leq \notag
  \norm{{B(\bvn,\bvn)}}_{V^{-1}} + \nu \norm{A^\alpha \bvn }_{V^{-1}} + \mu \norm{I_h(\bu)}_{V^{-1}} + \mu\norm{I_h(\bvn)}_{V^{-1}} + \norm{\vect{f_n}}_{V^{-\alpha}}.
\end{align}

% \todo[inline]{\collin{This actually shows a bound in $L^{...}(0,T; V^{-\alpha}$}}
Now we will bound each term individually. To do so we bound the $L^2$ inner product of each term above with $\bw \in V^\alpha$.
\begin{align}
    \abs{\left\langle A^\alpha \bvn, \bw \right\rangle} & = \abs{\left\langle A^{\alpha/2}\bvn, A^{\alpha/2}\bw \right\rangle}\\
    &\leq C\norm{A^{\alpha/2}\bvn}_{L^2} \norm{A^{\alpha/2}\bw}_{L^2}\notag \\ \notag
    &= C\norm{\bvn}_{V^\alpha}\norm{\bw}_{V^\alpha}.
\end{align}
Similarly,
\begin{align}
    \abs{\left\langle I_h(\bvn), \bw \right\rangle} & \leq \norm{I_h(\bvn) }_{L^2}\norm{\bw}_{L^2}\\ \notag
    % & \leq C \norm{\bvn}\norm{\bw}_{V}\notag \\
    &\leq  \Calpha  \norm{\bvn}_{L^2}\norm{\bw}_{V^\alpha}.
\end{align}
Also,
\begin{align}
    \abs{\left\langle I_h(\bu), \bw \right\rangle} & \leq \norm{I_h(\bu )}\norm{\bw}
    \leq  \Calpha  \norm{\bu}\norm{\bw}_{V^\alpha}\notag \\ \notag
    & \leq  \Calpha  M\norm{\bw}_{V^\alpha}.
    % \left\langle I_h(\bu), \bw \right\rangle & \leq
\end{align}
Finally,
\begin{align}
        \abs{b(\bvn,\bvn,\bw)} & = \abs{(b(\bvn, \bw, \bvn)}\\
        &\leq \norm{\bw}_{V}\norm{\bvn}_{L^4}^2\notag \\
        &\leq \norm{\bw}_{V}\norm{\bvn}_{V^\alpha}^{\frac{d}{2\alpha}}\norm{\bvn}^{2-\frac{d}{2\alpha}}\notag \\ \notag %This comes from Gagliardo-Nirenbirg interpolation inequality
        &\leq  \Calpha  \norm{\bw}_{V^\alpha}\norm{\bvn}_{V^\alpha}^{\frac{d}{2\alpha}}\norm{\bvn}^{2-\frac{d}{2\alpha}}.
\end{align}
% \todo[]{Note that this last inequality follows from the classic Gagliardo Nirenburg inequality, with $p = 4, q = 2, r = 2, m = \alpha$. This yields $\theta = \frac{d}{4\alpha}$. It also yields no restriction, except $\alpha \neq 0$, which is always true for the equations.}
Now we know by the previous estimates that $\set{\bvn}_{n\in\nN}$ is bounded in $L^\infty(0,T; H)\cap L^2(0,T;V^\alpha)$, and thus $\set{\dv{\bvn}{t}}_{n\in\nN}$ is bounded in $L^{\frac{4\alpha}{d}}(0,T; V^{-\alpha})$.

Now we can apply the Banach-Aloaglu Theorem and Aubin-Lions Compactness Lemma to extract subsequences (relabeling when necessary) such that:
\begin{align}
    \bvn \weakstarto &\bv \text{ weak-$*$ in } L^\infty(0,T;H), \label{weak*}\\    
    \bvn \weaklyto &\bv \text{ weakly in } L^2(0,T;V^\alpha),\label{weak}\\
    \bvn \to &\bv \text{ strongly in } L^2(0,T; H). \label{strong}
\end{align}

Now it remains to show that each term of the Galerkin system converges in $L^2(0,T;V^{-\alpha})$ to the equivalent terms in \cref{eq: NSE AOT functional}. Let $\bw \in C^1(0,T;V^\alpha)$ be an arbitrary test function such that $\bw(T) = 0$. Now we take the inner product of \cref{galerkin system} with $\bw$, integrate from $0$ to $T$, and integrate by parts to obtain the following equations for each $n\in\nN$:
\begin{align}
    -(\bvn(0), \bw(0)) - \int_0^T (\bvn, \bw_t)dt 
    + \int_0^T(P_n B(\bvn,\bvn),\bw)dt + \nu \!\int_0^T (A^{\alpha/2} \bvn, A^{\alpha/2}\bw)dt \label{weak eq Galerkin}\\
    = \mu \int_0^T ( P_n I_h(\bu), \bw)dt - \mu \int_0^T(P_n I_h(\bvn), \bw)dt + \int_0^T (\vect{f}_n, \bw)dt .\nonumber
\end{align}
Since $\bw \in C^1(0,T;V^\alpha)$, using \cref{weak}, it is straight-forward to show the following results for the linear terms:
\begin{align}
    \nu \int_0^T (A^{\alpha/2} \bvn, A^{\alpha/2}\bw)dt &\to \nu\int_0^T (A^{\alpha/2} \bv, A^{\alpha/2}\bw)dt, \\
    \int_0^T (\bvn, \bw_t)dt &\to \int_0^T (\bv, \bw_t)dt,\notag \\ \notag
   \mu \int_0^T(P_n I_h(\bvn), \bw)dt &\to \mu \int_0^T(P_n I_h(\bv),\bw)dt.
\end{align}

We now need to check the convergence of the trilinear form, $I_n \to 0$, where: 
\begin{align}
I_n:= \int_0^T (B(\bvn,\bvn),P_n\bw)dt - \int_0^T (B(\bv,\bv),\bw)dt. 
\end{align}
Note that here we have rewritten the trilinear term, using the fact that projection operators are self-adjoint.

To do this we rewrite the trilinear form, denoting $\bwn:=P_n\bw$, as:
\begin{align}
    I^n &= \underbrace{\int_0^T (B(\bv - \bvn,\bw),\bv)dt}_{I_1^n}\\
    &\quad+ \underbrace{\int_0^T (B(\bvn,\bw),\bv - \bvn)dt}_{I_2^n}\\
    &\quad+ \underbrace{\int_0^T (B(\bvn,\bw - \bwn),\bv)dt}_{I_3^n}.
\end{align}
%\todo[inline]{\collin{Double check this formulation. Might be $\bv$ in first part in $I_2$, also could be $I_1^n - I_2$.}}
% Thus we obtain the following:
% \begin{align}
%     \abs{I_1^n} &\leq \int_0^T \abs{(B(\bv - \bvn,\bw),\bv)}dt\\
%     &\leq \int_0^T \norm{\bv - \bvn}_{L^4} \norm{\nabla \bw}L^2\norm{\bv}_{L^4}\\
% \end{align}
% Here, we have that $\abs{I_1^n} \to 0$ as $n\to \infty$ 
We note that $\abs{I_1^n} = \abs{\int_0^T(B(\bv - \bvn,\bw),\bv)dt} \to 0$ as $n \to \infty$ follows from the weak-$*$ convergence of $\bvn$ to $\bv$ in $L^\infty(0,T;H)$. To justify this rigorously, we note that since $\bvn \weakstarto \bv$ in $L^\infty(0,T;H)$ we need only to show that each component $v_i\pd{w_k}{x_j}  \in L^1(0,T;H)$ for $i,j,k\in\set{1,\ldots,d}$. %We note that here we have rewritten our equations to move that $\nabla \bw$ to the right side of our inner product. 
To this end, we estimate, for each $i,j,k\in\set{1,\ldots,d}$,
\begin{align}
    \norm{v_i\pd{w_k}{x_j}}_{L^1(0,T;H)} & = \int_0^T \norm{v_i\pd{w_k}{x_j}}_{L^2} dt\\
    & \leq \int_0^T \Calpha\norm{v_i}_{L^{\frac{2d}{d-2\alpha}}}\norm{\pd{w_k}{x_j}}_{L^{\frac{d}{\alpha}}}dt\notag \\
    &\leq \int_0^T \Calpha \norm{v_i}_{V^\alpha}\norm{\pd{w_k}{x_j}}_{V^{\frac{d}{2} - \alpha}}dt\notag \\
    &\leq \int_0^T \Calpha\norm{\bv}_{V^\alpha}\norm{\bw}_{V^{\frac{d}{2} - \alpha + 1}}dt\notag \\
    &\leq \int_0^T \Calpha
    % \lambda_1^{d/2 -2\alpha + 1} 
    \norm{\bv}_{V^\alpha}\norm{\bw}_{V^\alpha} dt\notag \\
    &\leq \frac{ \Calpha }{2}\int_0^T\norm{\bv}_{V^\alpha}^{2}dt + \frac{ \Calpha }{2}\int_0^T\norm{\bw}_{V^\alpha}^{2}dt\notag \\\notag
    &\leq  \frac{ \Calpha }{2} \norm{\bv}_{L^2(0,T;V^\alpha)}^2 + T\frac{ \Calpha }{2}\norm{\bw}_{L^\infty(0,T;V^\alpha)}.
\end{align}
Note here that to obtain $\norm{\bw}_{V^{\frac{d}{2} +1 - \alpha }} \leq \norm{\bw}_{V^\alpha}$, we used $\frac{d}{2} - \alpha + 1 \leq \alpha$, which simplifies to $\frac{d}{4} + \frac{1}{2} \leq \alpha$.
We additionally note that this estimate applies only for $\alpha < d$ due to the H\"older conjugates chosen for the second line of estimates. For larger $\alpha$ values we instead use the following estimate, which takes advantage of Agmon's inequality:
\begin{align}\label{est: large alpha L2}\norm{v_i\pd{w_k}{x_j}}_{L^2} \leq C\norm{\bv}_{L^\infty}\norm{\bw}_{L^2} \leq C \norm{\bv}_{V^\alpha}\norm{\bw}_{V^\alpha}.\end{align}
From here we can proceed similarly to achieve the desired bounds.
Thus we have $v_i\pd{w_k}{x_j} \in L^1(0,T;V^\alpha)$ and thus $\abs{I_1^n} \to 0$ for each $i,j,k\in\set{1,\ldots,d}$ as expected.

Now to show $\abs{I_2^n}\to 0$ we will use the fact that $\bvn \to \bv $ strongly in $L^2(0,T;H)$.
\begin{align}
    \abs{I_2^n} &= \abs{\int_0^T (B(\bvn,\bw),\bv - \bvn)dt}\\
     &\leq \int_0^T \abs{(B(\bvn,\bw),\bv - \bvn)}dt\notag \\
     &\leq \int_0^T C\norm{\bvn}_{L^{\frac{2d}{d-2\alpha}}}\norm{\nabla \bw}_{L^{\frac{d}{\alpha}}}\norm{\bv - \bvn}_{L^2}dt\notag \\ 
     &\leq \int_0^T C\norm{\bvn}_{V^\alpha}\norm{\nabla \bw}_{V^{\frac{d}{2} - \alpha}}\norm{\bv - \bvn}_{L^2}dt\notag \\    
     &\leq \int_0^T C\norm{\bvn}_{V^\alpha}\norm{\bw}_{V^{\frac{d}{2} - \alpha + 1}}\norm{\bv - \bvn}_{L^2}dt\notag \\
     &\leq \int_0^T  \Calpha  \norm{\bvn}_{V^\alpha}\norm{\bw}_{V^{\alpha}}\norm{\bv - \bvn}_{L^2}dt\notag \\ \notag
     &\leq  \Calpha \norm{\bvn}_{L^2(0,T;V^\alpha)}\norm{\bw}_{L^\infty(0,T;V^\alpha)}\norm{\bv-\bvn}_{L^2(0,T;H)}.
\end{align}

We note that once again, this estimate is only valid for $\alpha \leq d$. Using \cref{est: large alpha L2} once again, we can extend this estimate to higher values of $\alpha$.
Thus we have that $\abs{I_2^n} \to 0$ as $\bvn$ is uniformly bounded in $L^2(0,T;V^\alpha)$, $\bw \in C(0,T;V^\alpha)$, and the strong convergence of $\bvn\to \bv$ in $L^2(0,T;H)$.

Finally, to show $\abs{I_3^n} \to 0$ we use the fact that $P_n \bw \to \bw$ in $L^\infty(0,T;V^\alpha)$ as $\bw \in C(0,T;V^\alpha)$.
\begin{align}
     \abs{I_3^n} &= \abs{\int_0^T (B(\bvn,\bw - P_n \bw),\bv)dt}\\
     &\leq \int_0^T \abs{(B(\bvn,\bw - P_n \bw),\bv)}dt\notag \\
     &\leq \int_0^T C\norm{\bvn}_{L^{\frac{2d}{d-2\alpha}}}\norm{\nabla (\bw - P_n \bw)}_{L^{\frac{d}{\alpha}}}\norm{\bv }_{L^2}dt\notag \\ 
     &\leq \int_0^T C\norm{\bvn}_{V^\alpha}\norm{\nabla (\bw - P_n \bw)}_{V^{\frac{d}{2} - \alpha}}\norm{\bv}_{L^2}dt\notag \\    
     &\leq \int_0^T C\norm{\bvn}_{V^\alpha}\norm{\bw - P_n \bw}_{V^{\frac{d}{2} - \alpha + 1}}\norm{\bv}_{L^2}dt\notag \\
     &\leq \int_0^T  \Calpha  \norm{\bvn}_{V^\alpha}\norm{\bw - P_n \bw}_{V^{\alpha}}\norm{\bv}_{L^2}dt\notag \\ \notag
     &\leq  \Calpha  \norm{\bvn}_{L^2(0,T;V^\alpha)}\norm{\bw - P_n\bw}_{L^\infty(0,T;V^\alpha)}\norm{\bv}_{L^2(0,T;H)}.
\end{align}

% \begin{align}
%     \abs{I_1^n} & \leq \int_0^T \abs{(B(\bvn - \bv,\bw),\bvn)}dt\\
%     & = \int_0^T 
%     \abs{((\bvn - \bv)\cdot \nabla \bw, \bvn)}dt \notag \\
%     & \leq C\int_0^T \norm{\bvn-\bv}_{L^2} \norm{\bw}_{V^\alpha}\norm{\bvn}_{V^\alpha} dt\notag \\
%     &\leq C\norm{\bvn-\bv}_{L^2(0,T;H)}\norm{\bw}_{L^\infty(0,T;V^\alpha)}\norm{\bvn}_{L^2(0,T;V^\alpha)}
% \end{align}

% Here's is the same calculation done with more standard bounds:

% \begin{align}
%     \abs{I_1^n} & \leq \int_0^T \abs{(B(\bvn - \bv,\bw),\bvn)}dt\notag \\
%     &\leq C\int_0^T \norm{\bvn - \bv}\norm{\nabla\bw}_{L^3}\norm{\bvn}_{L^6}dt\notag \\
%     &\leq C \int_0^T \norm{\bvn - \bv}\norm{\bw}_{V}^{1/2}\norm{A\bw}^{1/2}\norm{\bvn}_{V}dt\notag \\
%     &\leq C \norm{\bvn - \bv}_{L^2(0,T;H)}\norm{\bw}^{1/2}_{L^\infty(0,T;V)}\norm{\bw}^{1/2}_{L^\infty(0,T; \mathcal{D})(A)}\norm{\bvn}_{L^2(0,T;V)}
% \end{align}

% \todo[inline]{\collin{Something seems fishy about this. It was done following Adam's proof for Voight MHD. We only assume that $\bw \in C^1(0,T;H_k^\alpha)$, so it shouldn't have a bound in $\mathcal{D}(A)$ (which is $\norm{A\bw}_{L^2}$, I think).}} 

% \todo[inline]{\collin{This inequality is H\"older's with the same space pairing used above in the $\dv{\bvn}{t}$ equations. Should be adjusted for $\alpha > 5/4$. }} 
% Thus, by \cref{strong} and the uniform bounds in $n$ on $\norm{\bvn}_{L^2(0,T;V^\alpha)}$ we have that $I_1^n \to 0$ with $I_2 \to 0$ following similarly. Thus we have that $I_n \to 0$ as $n\to \infty$. 

As with all of the previous estimates, this estimate remains valid for $\alpha \leq d$, but can be extended to higher values by use of \cref{est: large alpha L2}.
Thus we have that $\abs{I^n} \to 0$ as $n \to \infty$.

% Now, as $C^1(0,T;H_k^\alpha)$ is dense in $C^1(0,T;V^\alpha)$ we can extend this result to all $\bw \in C^1(0,T;V^\alpha)$ with $\bw(T) = 0$. This is extension is trivial for the linear terms, however it does require the bound $\bv \in L^\infty(0,T;V^\alpha)$ for the convergence of the nonlinear term. Let $\bwn \subset C^1(0,T;H_n^\alpha)$ be given such that $\bwn \to \bw$ in $L^2(0,T;V)$, for some $\bw \in C^1(0,T;V^\alpha)$ with $\bw(T) 0$.

% \begin{align}
%     \abs{\int_0^T (B(\bv, \bv), \bwn - \bw)dt} &\leq \int_0^T \abs{b(\bv, \bv, \bwn - \bw)}dt\\
%     &\leq C\int_0^T\norm{\bv}_{L^3}\norm{\nabla\bv}\norm{\bwn-\bw}_{L^6}dt\notag \\
%     &\leq C\int_0^T\norm{\bv}^{1/2}\norm{\bv}^{3/2}_{V}\norm{\bwn-\bw}_{V}dt\notag \\
%     &\leq C\norm{\bv}_{L^2(0,T;H)}\norm{\bv}_{L^\infty(0,T;V)}\norm{\bwn - \bw}_{L^2(0,T;V)}
% \end{align}

% This extension does utilize the fact that $\bv \in L^\infty(0,T;V^\alpha)$ and involves similar estimates for the convergence nonlinear term $I_n$.

Thus, we have that $\bv$ solves the following system, for all $\bw \in C^1(0,T; L^2)$ with $\bw(T) = 0$: 
\begin{align}
    -(\bv(0), \bw(0)) - \int_0^T (\bv, \bw_t)dt + \int_0^T(B(\bv,\bv),\bw)dt + \nu \int_0^T (A^{\alpha/2} \bvn, A^{\alpha/2}\bw)dt\label{weak eq}\\
    = \mu \int_0^T ( I_h(\bu), \bw)dt - \mu \int_0^T(I_h(\bv), \bw)dt.\nonumber
\end{align}

Thus, we have that $\bv$ is a solution to \cref{eq: NSE AOT functional} in the sense of $L^2(0,T;V^{-\alpha})$. It follows from comparison with \cref{weak eq Galerkin} that $\bv(0) = \bv_0$. By subtracting \cref{weak eq Galerkin} and \cref{weak eq}, we obtain, after passing to the limits,
\begin{align}
    (\bv_0 - \bv(0), \vect{\psi}) = 0.
\end{align}
Here $\bw(0) = \vect{\psi} \in V^\alpha$. We note that since $\bw(0)\in C(0,T;V^\alpha)$ was arbitrary, so too is $\vect{\psi}$. Thus, as this holds for any $\vect{\psi} \in V^\alpha$ we have that $\bv_0 = \bv(0)$, and so the initial condition is indeed satisfied.

Finally, we want to show that $\bv \in C_w(0,T;H)$, that is $\bv$ is weakly continuous. To do so, we first note that $\bv \in L^\infty(0,T;H)$, which we obtained previously. 
It remains to show that $(\bv(t), \bphi)$ is a continuous function for any $\bphi \in H$. 
We note, that by \cref{strong,weak,weak*}, that there exists a Lebesgue measureable set $E$ such that for any $t\in [0,T]\setminus E$,
\begin{align}    
    \bvn(t) \weaklyto \bv(t) \text{ weakly in } V^\alpha \text{ and }    \bvn(t) \to \bv(t) \text{ strongly in } H.
\end{align}
 Now let $\bphi \in V^\alpha$ be given, and we take the inner product of \cref{weak eq Galerkin} with $\bphi$ and integrate from $t_0$ to $t$ for $t_0 \in [0,T]\setminus E$. This yields,
\begin{align}
    (\bvn(t),\bphi) -(\bvn(t_0), \bphi)  + \int_{t_0}^t(P_n B(\bvn,\bvn),\bphi)ds + \nu \int_{t_0}^t (A^{\alpha/2} \bvn, A^{\alpha/2}\bphi)ds\\
    = \mu \int_{t_0}^t ( I_h(\bu), \bphi)ds - \mu \int_{t_0}^t(I_h(\bvn), \bphi)ds.\nonumber
\end{align}

Now, by similar arguments to the ones above, we obtain the following equation for $\bv$.
\begin{align}
    (\bv(t),\bphi) -(\bv(t_0), \bphi)  + \int_{t_0}^t(B(\bv,\bv),\bphi)ds + \nu \int_{t_0}^t (A^{\alpha/2} \bv, A^{\alpha/2}\bphi)ds\\ \notag
    = \mu \int_{t_0}^t ( I_h(\bu), \bphi)ds - \mu \int_{t_0}^t(I_h(\bv), \bphi)ds.\nonumber
\end{align}

% \todo[inline]{\collin{Double check this formulation. It is different than above, should there be a $\int(\bvn, \bphi')ds$ term? This wouldn't make sense here, as there is no time derivative for $\bphi$. - This is correct, above we are using an arbitrary \bw, here we have $\vect{\psi}$ is constant in time.}} 

Now, we can send $t\to t_0$ using values $t\in [0,T]\setminus E$ and we have that 
\begin{align}
\lim_{t\to t_0} (\bv(t),\bphi) = (\bv(t_0),\bphi).
\end{align}
Since this holds for all $\bphi \in V^\alpha$, it must also hold for all $\bphi \in H$ as $V^\alpha$ is dense in $H$. Therefore we have that $\bv\in C_w(0,T;H)$.
\end{proof}
%on Let $\bphi \in V^\alpha$ be given and take $t_0 \in [0,T]\setminus E$, where $E$ is a the Lebesgue measureable

We note that thus far we have only established global existence of weak solutions. We have not established uniqueness or continuous dependence on initial data. Instead of attempting to prove these statements for weak solutions, we will move to establish existence of more regular solutions. 

% \subsection{Existence of classical Solutions}

%% Theorem:
\begin{theorem}\label{thm - classical Existence}  Assume the hypotheses of \Cref{thm: weak}.
If, in addition, it holds that $\bv_0\in V^{2\alpha}$, then the weak solution $\bv$ of \cref{eq: NSE AOT functional} is in fact a classical solution. That is, $\bv \in L^\infty(0,T;V^{2\alpha})\cap L^2(0,T;V^{3\alpha})$.
\end{theorem}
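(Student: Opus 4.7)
The plan is to return to the Galerkin approximation $\bvn$ constructed in the proof of \Cref{thm: weak} and bootstrap the energy estimates to higher regularity, deriving uniform bounds in $L^\infty(0,T;V^{2\alpha})\cap L^2(0,T;V^{3\alpha})$. Concretely, I will test \cref{galerkin system} with $A^{2\alpha}\bvn$ (which is permissible at the Galerkin level since $\bvn\in H_n$) and use the self-adjointness of $A$ together with $(A^\alpha\bvn,A^{2\alpha}\bvn)=\|\bvn\|_{V^{3\alpha}}^2$ to obtain, schematically,
\begin{align*}
    \tfrac{1}{2}\tfrac{d}{dt}\|\bvn\|_{V^{2\alpha}}^2 + \nu\|\bvn\|_{V^{3\alpha}}^2 = -b(\bvn,\bvn,A^{2\alpha}\bvn) + \langle \bbf,A^{2\alpha}\bvn\rangle + \mu\langle P_\sigma I_h(\bu-\bvn),A^{2\alpha}\bvn\rangle.
\end{align*}
The forcing and nudging terms will be handled by pairing $A^{\alpha/2}$ on one side with $A^{3\alpha/2}$ on the other, then using Cauchy--Schwarz and Young's inequality, absorbing $\tfrac{\nu}{4}\|\bvn\|_{V^{3\alpha}}^2$ into the diffusion. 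For the nudging term, the hypothesis $\bu\in C([0,T];V^{2\alpha})$ and the interpolation assumption \cref{eq: interp} applied at the appropriate order furnish the needed bound depending on $\|\bu\|_{V^{2\alpha}}$.

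\textbf{Main obstacle.} The hard part will be the trilinear term $b(\bvn,\bvn,A^{2\alpha}\bvn)$, as the estimates \cref{B:424,B:442} are tuned for $\bw\in H$ or $V^\alpha$ rather than $V^{2\alpha}$. I plan to rewrite it as $(A^{\alpha/2}B(\bvn,\bvn),A^{3\alpha/2}\bvn)$ and, after Cauchy--Schwarz and Young, reduce to estimating $\|\Lambda^\alpha((\bvn\cdot\nabla)\bvn)\|_{L^2}$ (using that $P_\sigma$ commutes with Fourier multipliers and is bounded on $L^2$). Applying the Kato--Ponce inequality \cref{ineq:Kato-Ponce} with $s=\alpha$ gives two products of the form $\|\Lambda^\alpha \bvn\|_{L^{p_1}}\|\nabla\bvn\|_{L^{q_1}}$ and $\|\bvn\|_{L^{p_2}}\|\Lambda^{\alpha+1}\bvn\|_{L^{q_2}}$. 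Each factor will be interpolated between $V^{2\alpha}$ and $V^{3\alpha}$ using Sobolev embedding and the Gagliardo--Nirenberg--Sobolev inequality \cref{Ladyzhenskaya}, yielding (after Young's inequality) a bound of the form
\begin{align*}
    |b(\bvn,\bvn,A^{2\alpha}\bvn)| \leq \tfrac{\nu}{4}\|\bvn\|_{V^{3\alpha}}^2 + C_\nu\,P(\|\bvn\|_{V^{2\alpha}}),
\end{align*}
where $P$ is a polynomial. The condition $\alpha\geq \tfrac{d}{4}+\tfrac{1}{2}$ is precisely what is needed to close this interpolation with exponents in the admissible range, exactly as in the proof of \Cref{B:442}.

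\textbf{Conclusion and passage to the limit.} Combining all estimates yields a differential inequality of the form $\tfrac{d}{dt}\|\bvn\|_{V^{2\alpha}}^2 + \tfrac{\nu}{2}\|\bvn\|_{V^{3\alpha}}^2 \leq C_\nu P(\|\bvn\|_{V^{2\alpha}}^2) + G(t)$, with $G\in L^1(0,T)$ depending on $\|\bu\|_{V^{2\alpha}}$ and $\|\bbf\|_{L^2}$. A standard Grönwall argument, relying on the continuity of $t\mapsto\|\bvn(t)\|_{V^{2\alpha}}^2$ on the maximal interval of existence, then gives uniform (in $n$) bounds in $L^\infty(0,T;V^{2\alpha})\cap L^2(0,T;V^{3\alpha})$, and an analogous bound on $\tfrac{d\bvn}{dt}$ in $L^2(0,T;V^\alpha)$. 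The same Banach--Alaoglu and Aubin--Lions compactness procedure carried out in \Cref{thm: weak}, applied at this higher regularity, extracts a subsequence whose limit coincides with the weak solution $\bv$ by uniqueness of weak limits. Uniqueness (and continuous dependence on initial data) will follow by taking the difference $\bw := \bv_1 - \bv_2$ of two classical solutions, testing with $\bw$, using \cref{symm2} and \cref{B:424}, and applying Grönwall, since the additional $-\mu I_h(\bw)$ term actually contributes a dissipative estimate via \cref{eq: interp}, exactly as in the Galerkin energy estimate.
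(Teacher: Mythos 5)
Your overall strategy (Galerkin, test against a power of $A$, Kato--Ponce plus Gagliardo--Nirenberg on the trilinear term, absorb into the diffusion, Gr\"onwall, Aubin--Lions) is the right family of techniques and matches the paper in spirit. However, there is a genuine gap in the way you propose to close the estimate. Testing directly with $A^{2\alpha}\bvn$ and bounding the trilinear term by $\tfrac{\nu}{4}\norm{\bvn}_{V^{3\alpha}}^2 + C_\nu P\bigl(\norm{\bvn}_{V^{2\alpha}}\bigr)$ with $P$ a polynomial does \emph{not} yield global-in-time bounds by a ``standard Gr\"onwall argument.'' If you carry out the Kato--Ponce/interpolation step you describe, the surviving term is superlinear in $y:=\norm{\bvn}_{V^{2\alpha}}^2$ (already the piece $\norm{\Lambda^{2\alpha}\bvn}_{L^4}^2\norm{\nabla\bvn}_{L^2}$ produces at least $y^2$ after Young), so the comparison ODE $\dot y\leq C y^p$ with $p>1$ blows up in finite time and only gives local control. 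At this stage the only a priori information available is the weak-solution bound $\bv\in L^\infty(0,T;H)\cap L^2(0,T;V^\alpha)$, which is not enough to linearize the inequality: a coefficient like $\norm{\bvn}_{V^\alpha}^{2/\theta}$ with $2/\theta>2$ is not known to be integrable from that bound alone.

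This is exactly why the paper's proof is a multi-stage ladder rather than a single shot. It first tests with $A\bvn$, choosing the Gagliardo--Nirenberg exponents so that the Gr\"onwall coefficient is $\norm{\bvn}_{V^\alpha}^{2a_2/a_1}$ with $a_2/a_1\leq 1$, hence integrable by H\"older from the weak energy bound; this gives $\bv\in L^\infty(0,T;V)\cap L^2(0,T;V^{\alpha+1})$. It then tests with $A^\alpha\bvn$, where the coefficient involves only $\norm{\bvn}_{V}^{1/\theta}+\norm{\bvn}_{V}^{2/\theta}$, now bounded in $L^\infty(0,T)$ by the previous stage; this gives $\bv\in L^\infty(0,T;V^\alpha)$. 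Only then does the inductive step with $A^s\bvn$ produce a differential inequality that is \emph{linear} in $\norm{\bvn}_{V^s}^2$ with coefficient $\norm{\bvn}_{V}^{1/\theta}+\norm{\bvn}_{V^\alpha}^{2/\theta}\in L^\infty(0,T)$, so Gr\"onwall closes globally and one iterates up to $s=2\alpha$. To repair your argument you must distribute the derivatives in the Kato--Ponce estimate so that the norm appearing quadratically is the one you are evolving, with all remaining factors controlled by previously established (lower-order) uniform bounds --- i.e., you must insert the intermediate $V$ and $V^\alpha$ stages before reaching $V^{2\alpha}$. (Two smaller points: the paper also needs separate treatment of the $V$-level nonlinear estimate for $\alpha\geq\tfrac{d}{2}+1$ via Agmon's inequality, and the restriction $d\leq 8$ enters through the Sobolev embedding $\norm{\bv}_{L^{8/3}}\leq\norm{\bv}_{V^{d/8}}\leq\norm{\bv}_{V}$ in the $V^\alpha$ stage; neither issue surfaces in your sketch.)
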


\begin{proof}
Here, we only show formal estimates, but these can be made rigorous by using Galerkin methods as in the proof of \Cref{thm: weak}.
We first take the inner product of the equation \cref{eq: NSE AOT functional} with $A \bv$. After (formally) integrating by parts we obtain the following equation:
\begin{align}
\frac{1}{2}\dv{~}{t}\norm{\bv}_{V}^2 + \nu\norm{A^{\alpha/2 + 1}\bv}^2_{L^2}&= \sum_{i,j,k=1}^{d}\int_\Omega \pdv{v_i}{x_k}\pdv{v_j}{x_i} \pdv{v_j}{x_k} \, d\bx + \left\langle\vect{f}, \bv\right\rangle 
\\ \notag&\qquad \notag
+ \mu \left\langle I_h(\bu), A \bv \right\rangle - \mu \left\langle I_h(\bv), A \bv \right\rangle.
\end{align}

% \todo[inline]{\collin{Use Galerkin method to make this rigorous. Should be virtually the same as for weak solutions, but we have better initial data, which means better bounds, and thus convergence in better spaces.}}

% \todo[inline]{\collin{$\langle\pdv{\bv}{t}, \bv\rangle_\alpha = \frac{1}{2}\dv{~}{t}\norm{\bv}_{V^\alpha}$ is justified by the Lions-Magenes Lemma for $\bv \in L^2(0,T;V^\alpha)$ and $\dv{\bv}{t} \in L^2(0,T;V^{-\alpha}})$ 
% (See Olson-Titi 2007).\\~\\
% This should be true, see the paper for their justification. \\~\\
% We will need that $\bv \in L^2(0,T;V^s)$ and $\dv{\bv}{t} \in L^2(0,T;V^{-s})$ for $s \in \{1,\alpha,2\alpha\}$ to justify the bootstrapping.
% %https://www.sciencedirect.com/science/article/pii/S0362546X06001982?ref=cra_js_challenge&fr=RR-1).   
% }

We now estimate all of the terms on the right-hand side as follows:

\begin{align}
    - \mu \left\langle I_h(\bv), A \bv \right\rangle &=  \mu \left\langle \bv - I_h(\bv), A \bv \right\rangle + \mu \left\langle \bv, A \bv \right\rangle\\ \notag
    % &\leq \mu\left\langle \bv - I_h\bv, A \bv \right\rangle - \mu \left\langle \bv, A \bv \right\rangle\\ \notag
    &\leq \mu \norm{\bv - I_h(\bv)}_{L^2} \norm{A \bv}_{L^2}- \mu \norm{\bv}^2_{V}\\ \notag
    &\leq \frac{\mu^2 \Calpha }{\nu} \norm{\bv - I_h(\bv)}^2_{L^2} + \frac{\nu}{4 \Calpha }\norm{A \bv}_{L^2}^2 - \mu \norm{\bv}^2_{V}\\ \notag
    &\leq \frac{\mu^2 c_{0,\alpha} h^2 \Calpha  }{\nu}\norm{\bv}_{V}^2 + \frac{\nu}{4 \Calpha }\norm{ \bv}^2_{V^2} - \mu \norm{\bv}_{V}^2\\ \notag
    &\leq \frac{\nu}{4}\norm{\bv}_{V^{\alpha+1}}^2.
\end{align}
Also,
\begin{align}
\mu \abs{\left\langle I_h(\bu),A \bv\right \rangle} &\leq \mu \norm{I_h(\bu)}_{L^2} \norm{A\bv}_{L^2}\\ \notag
&\leq \frac{2\mu  \Calpha  }{\nu}\norm{I_h(\bu)}^2_{L^2} + \frac{\nu}{8 \Calpha }\norm{A\bu}^2_{L^2}\\ \notag
&\leq  \Calpha  M + \frac{\nu}{16} \norm{\bv}^2_{V^{\alpha + 1}}.
\end{align}
Moreover,
\begin{align}
    \abs{\left\langle\vect{f}, A  \bv \right\rangle} &\leq   \Cnu \norm{\vect{f}}_{L^2}\norm{\bv}_{V^2}\\ \notag
    &\leq   C_{\alpha,\nu}  \norm{\vect{f}}_{L^2}^2 + \frac{\nu}{16  \Calpha }\norm{\bv}_{V^2}^2\\ \notag
    &\leq   C_{\alpha,\nu}  \norm{\vect{f}}_{L^2}^2 + \frac{\nu}{16}\norm{\bv}_{V^{\alpha + 1}}^2.
\end{align}
Finally,
\begin{align}\label{vvv_bnd}
     \abs{\sum_{i,j,k=1}^{d}\int_\Omega \pdv{v_i}{x_k}\pdv{v_j}{x_i} \pdv{v_j}{x_k}\,  d\bx} &\leq C\norm{\nabla \bv}_{L^3}^3\\ \notag
     &\leq C\norm{\nabla \bv}^{3a_1}_{L^2}\norm{A^{\frac{\alpha}{2}}\bv}^{3a_2}_{L^2}\norm{A^{\frac{\alpha + 1}{2}}\bv}^{3a_3}_{L^2}\\ \notag
     &\leq \frac{\nu}{8}\norm{\bv}^{2}_{V^{\alpha+1}} + C\norm{ \bv}^{2}_{V}\norm{\bv}^{2\frac{a_2}{a_1}}_{V^\alpha}.
\end{align}
Here $(a_1,a_2,a_3) = (1 - \gamma, \frac{1}{3}, \gamma - \frac{1}{3})$, where $\gamma = \frac{1}{3\alpha}(1 + \frac{d}{2})$. Note that here $M>0$ is a constant such that $\norm{I_h(\bu)}_{L^\infty(0,T;H)}^2\leq M$. Such an $M$ exists since $\bu \in L^\infty(0,T;H)$ and thus $I_h(\bu)\in L^\infty(0,T;H)$ as well.
Now we note that bound \cref{vvv_bnd} is only applicable for $\alpha < \frac{d}{2} + 1$.
% \todo[inline]{\collin{From Olson-Titi 2007, it seems like for $\alpha \geq 5/2$ (for us $\alpha \geq \frac{d}{2} + 1$) we get Gevrey regularity.}}

% For the $\alpha \geq \frac{d}{2} + 1$ case we note that for $n\geq 2$ 

% \todo[inline]{\collin{We can also get an estimate with Agmon's inequality for $\alpha > n$. This is $\abs{b(\bv, \bv, A \bv)}\leq \norm{\bv}_{L^\infty}\norm{\bv}_{V}\norm{\bv}_{V^2} \leq \norm{\bv}_{L^2}^{1/2}\norm{\bv}_{V^n}^{1/2}\norm{\bv}_{V}\norm{\bv}_{V^2} $. I'm not sure how to close the estimate though. We can push whatever we want onto the $H$ term, but  }}

Hence, we obtain the following estimate:
\begin{align}
    \dv{~}{t}\norm{\bv}_{V}^2 + \nu\norm{\bv}^2_{V^{\alpha+1}} \leq  \Calpha \norm{\bv}^{2\frac{a_2}{a_1}}_{V^\alpha} \norm{\bv}_{V}^2 + CM.
\end{align}

We recall from the previous section, that $\bv \in L^\infty(0,T;H)\cap L^2(0,T;V^\alpha)$. Using this and the fact that $\frac{a_2}{a_1} \leq 1$ for $\alpha \geq \frac{d}{4} + \frac{1}{2}$, we obtain the following via H\"older's inequality,
\begin{align}
    \int_0^t \norm{\bv}_{V^\alpha}^{2\frac{a_2}{a_1}}ds &\leq \left(\int_0^T \norm{\bv}_{V^\alpha}^{2 \frac{a_2}{a_1} (\frac{a_1}{a_2})}dt\right)^{\frac{a_2}{a_1}}\left( \int_0^T 1^{\frac{1}{1-\frac{a_2}{a_1}}}\right)^{1-\frac{a_2}{a_1}}\\\notag
    &= T^{1-\frac{a_2}{a_1}}\left(\int_0^T \norm{\bv}_{V^\alpha}^2\right)^{\frac{a_1}{a_2}} dt
    < \infty.
\end{align}
Thus it follows straightforwardly from an application of Gr\"onwall's inequality that $\bv \in L^\infty(0,T;V)\cap L^2(0,T;V^{\alpha + 1})$.

% \todo[inline]{\collin{Note here that we use $\bx^b \leq \bx^{a+b}$, which only works for $\bx>1$. If we have that the norm is less than one, then it is bounded away from infinity. So I think this should be fine. We can probably just do $\norm{\bv}_{\alpha} \leq \norm{\bv}_\alpha + 1$, and then $\int_0^t \norm{\bv}_{V^\alpha}^{2\frac{a_2}{a_2}}dt \leq \int_0^t (\norm{\bv}_{V^\alpha}+1)^{2\frac{a_2}{a_2}}dt \leq \int_0^t (\norm{\bv}_{V^\alpha}+1)^{2}dt \leq  \int_0^t \norm{\bv}_{V^\alpha}^{2} + 1dt < \infty $}}

We note that we can obtain a similar bound on the nonlinear term for $\alpha >\frac{d}{2} + 1$ as follows:
% \begin{align}
%     \abs{b(\bv,\bv,A \bv)}& \leq \norm{\bv}_{L^\infty}\norm{\bv}_{V}\norm{\bv}_{V^2}\\ \notag
%     &\leq C\norm{\bv}_{L^2}^{1/2}\norm{\bv}_{V^n}^{1/2}\norm{\bv}_{V}\norm{\bv}_{V^2}\\ \notag
%     &\leq 
% \end{align}
\begin{align}
    \abs{b(\bv,\bv,A \bv)}& \leq C\norm{\bv}_{L^\infty}\norm{\bv}_{V} \norm{\bv}_{V^2}\\ \notag
    &\leq C \norm{\bv}_{V^{\frac{d}{2} - 1}}^{1/2}\norm{\bv}_{V^{\frac{d}{2}+1}}^{1/2}\norm{\bv}_{V}\norm{\bv}_{V^2}\\ \notag
    &\leq  \Calpha  \norm{\bv}_{V^\alpha}\norm{\bv}_{V}\norm{\bv}_{V^{\alpha+1}}\\ \notag
   &\leq   C_{\alpha,\nu} \norm{\bv}_{V^\alpha}^2\norm{\bv}_{V}^2 + \frac{\nu}{2}\norm{\bv}_{V^{\alpha + 1}}^2.
\end{align}

This yields a closed estimate as $\int_0^t \norm{\bv}_{V^\alpha}^{2} ds < \infty$ for $\alpha \geq \frac{d}{2} + 1$. Thus we have obtained  $\bv \in L^\infty(0,T;V)$ for any choice of $\alpha \geq \frac{d}{4} + \frac{1}{2}$.

Before we move onto the inductive step, we additionally require $\bv \in L^\infty(0,T;V^\alpha)$. To obtain such a bound we take the inner product of \cref{eq: NSE AOT functional} with $A^\alpha \bv$. This yields the following set of equations:
\begin{align}\label{eq: Halpha}
&\quad
    \frac{1}{2}\dv{~}{t}\norm{\bv}_{V^\alpha}^2 + \nu \norm{\bv}_{V^{2\alpha}}^2 + \left(B(\bv,\bv),A^\alpha \bv\right) 
    \\&\notag= (\vect{f},A^\alpha \bv) + \mu (I_h(\bu),A^\alpha \bv) - \mu (I_h(\bv),A^s\bv).
\end{align}

Using the Kato-Ponce inequality \cref{ineq:Kato-Ponce} we now estimate the nonlinear term as follows:
{\allowdisplaybreaks
\begin{align}
&\quad
    \abs{\left\langle B(\bv,\bv),A^\alpha \bv \right\rangle} 
    \\&= \notag
        \abs{
            \left\langle A^{\alpha/2} B(\bv,\bv),A^{\alpha/2} \bv \right\rangle
            }
% \\\notag&= 
%         \abs{
%             \left( P_\sigma\Lambda^{\alpha} B(\bv,\bv),P_\sigma\Lambda^{\alpha} \bv \right)
%             }
= 
        \abs{
            \left\langle \Lambda^{\alpha} B(\bv,\bv),\Lambda^{\alpha} \bv \right\rangle
            }
    \\ \notag&\leq C 
            \norm{\Lambda^{\alpha} B(\bv,\bv)}_{L^{4/3}}
            \norm{\Lambda^{\alpha} \bv }_{L^4}
    % \\ \notag&\leq C 
    %     \norm{A^{\alpha/2} B(\bv,\bv)}_{L^{4/3}}
    %     \norm{A^{\alpha/2} \bv }_{L^4}
     \\ \notag&\leq C 
            \norm{\nabla \bv}_{L^2}\norm{\Lambda^{\alpha} \bv }^2_{L^4} 
        + C
            \norm{\bv}_{L^{8/3}}\norm{\Lambda^{\alpha}\nabla \bv}_{L^{8/3}}\norm{\Lambda^{\alpha} \bv }_{L^4}
    \\ \notag&= C 
            \norm{\nabla \bv}_{L^2}\norm{A^{\alpha/2} \bv }^2_{L^4} 
        + C
            \norm{\bv}_{L^{8/3}}\norm{A^{\alpha/2}\nabla \bv}_{L^{8/3}}\norm{A^{\alpha/2} \bv }_{L^4}
    \\ \notag&\leq C             
            \norm{\bv}_{V}\norm{\bv}_{V^\alpha}^{2\theta}
            \norm{\bv}_{V^{2\alpha}}^{2(1-\theta)} 
        + C
            \norm{\bv}_{V^{d/8}}\norm{\bv}_{V^{d/8 + \alpha}}
            \norm{\bv}_{V^\alpha}^\theta 
            \norm{\bv}_{V^{2\alpha}}^{1-\theta}
    \\ \notag&\leq 
    \frac{\nu}{24}
            \norm{\bv}_{V^{2\alpha}}^2 
        + C_{\alpha,\nu}
            \norm{\bv}_{V}^{\frac{1}{\theta}}
            \norm{\bv}_{V^{\alpha}}^2 
        + C
            \norm{\bv}_{V}\norm{\bv}_{V^{\alpha + 1}}
            \norm{\bv}_{V^\alpha}^\theta 
            \norm{\bv}_{V^{2\alpha}}^{1-\theta}
    \\ \notag&\leq 
        \frac{\nu}{24}
            \norm{\bv}_{V^{2\alpha}}^2 
        + C_{\alpha,\nu}
            \norm{\bv}_{V}^{\frac{1}{\theta}}
            \norm{\bv}_{V^{\alpha}}^2 
    +\frac{\nu}{24}
        \norm{\bv}_{V^{2\alpha}}^2 
    +  C_{\alpha,\nu}
        \norm{\bv}_{V}^2
        \norm{\bv}_{V^\alpha}^{2\theta} 
        \norm{\bv}_{V^{2\alpha}}^{2(1-\theta)}
    \\ \notag&\leq 
    \frac{2\nu}{24}
            \norm{\bv}_{V^{2\alpha}}^2 
        + C_{\alpha,\nu}
            \norm{\bv}_{V}^{\frac{1}{\theta}}
            \norm{\bv}_{V^{\alpha}}^2 
        +  C_{\alpha,\nu}
            \norm{\bv}_{V}^{\frac{2}{\theta}}
            \norm{\bv}_{V^\alpha}^{2} 
        +  \frac{\nu}{24}
            \norm{\bv}_{V^{2\alpha}}^{2}
    \\ \notag&\leq \frac{3\nu}{24}
        \norm{\bv}_{V^{2\alpha}}^2 
        + C_{\alpha,\nu}
            \left(
                    \norm{\bv}_{V}^{\frac{1}{\theta}}
                + 
                    \norm{\bv}_{V}^{\frac{2}{\theta}} 
            \right)
            \norm{\bv}_{V^{\alpha}}^2.
\end{align}
}

Note that above we are using the Sobolev inequalities $\norm{\bv}_{L^{8/3}} \leq \norm{\bv}_{V^{d/8}} \leq \norm{\bv}_{V}$, which will only remain valid for $d\leq 8$. 
\begin{remark}\label{rmk_dimension_restriction}
    This is the only part of the proof where the restriction $d\leq8$ appears.  It may be possible to push these results to higher dimensions, but doing so would require at least different estimates, so we do not pursue this here.
\end{remark}
We now perform similar estimates of the terms on the right-hand side of \cref{eq: Halpha}.
\begin{align}
    -\mu \langle I_h(\bv), A^s\bv \rangle &\leq \frac{\nu}{4}\norm{\bv}_{V^{2\alpha}},
\\
    \mu \langle I_h(\bu), A^s\bv \rangle &\leq C_\alpha M +  \frac{\nu}{16}\norm{\bv}_{V^{2\alpha}},
\\
    \langle\vect{f}, A^s\bv \rangle &\leq C_{\alpha,\nu}\norm{\vect{f}}^2_{L^2} + \frac{\nu}{16}\norm{\bv}_{V^{2\alpha}}.
\end{align}

Putting all of these estimates together we obtain the following inequality

\begin{align}
    \frac{1}{2}\dv{~}{t}\norm{\bv}_{V^\alpha}^2 +\frac{\nu}{2}\norm{\bv}_{V^{2\alpha}}^2 \leq C \left( \norm{\bv}_{V}^{\frac{1}{\theta}} + \norm{\bv}_{V}^{\frac{2}{\theta}} \right)\norm{\bv}_{V^{\alpha}}^2 + C_{\alpha,\nu}\norm{\vect{f}}^2_{L^2} + C_\alpha M.
\end{align}
From here we can apply Gr\"onwall's inequality to obtain a closed estimate with $\bv \in L^\infty(0,T;V^\alpha)$.

We can now proceed inductively to establish higher regularities.
% \todo[inline]{\collin{The core of the bootstrap proof of higher regularity seems to be that we know from previously (either weak soln. or base case of classical soln.) that $\bv\in L^2(0,T;V^{s})$. The base case is used to establish $\bv \in L^\infty(0,T;V)$, which we need to bound the nonlinear term. Using this we can bootstrap a bound in $L^\infty(0,T;V^{s})\cap L^2(0,T;V^{s+\alpha})$. Using this we can inductively add regularity until we hit a point where $\norm{\bv_0}$ ceases to be bounded. By assumption $\bv_0 \in V^{2\alpha}$, so we obtain $\bv\in L^{\infty}(0,T;V^{2\alpha})\cap L^2(0,T;V^{3\alpha})$.}}
% \subsection{Inductive Proof}
To do so we will first take the action of $A^s \bv$ on \cref{eq: NSE AOT functional}  for $s$ to be determined later. This yields
\begin{align}
&\quad
    \frac{1}{2} \dv{~}{t}\norm{\bv}_{V^s}^2 + \nu \norm{\bv}_{V^{\alpha+s}}^2 + \langle B(\bv,\bv),A^s \bv \rangle 
    \\&= \notag
    \langle \vect{f},\bv \rangle + \mu \langle I_h(\bu),A^s\bv \rangle - \mu \langle I_h(\bv), A^s \bv\rangle. \label{inductive_estimate}
\end{align}
% \todo[inline]{\collin{Double check the $\frac{1}{2}\dv{~}{t}\norm{\bv}_{V^s}^2$. Check that this can be justified (see Temam Chp. 3 Lemma 1.2, Robinson 2001, which are justified by Lions and Magenes 1972). Need $\dv{\bv}{t} \in L^2(0,T;V^{-s})$ to justify this.}}

% \todo[inline]{\collin{Note that we don't actually need the estimates I thought. We only need $I_h$ nice enough such that it commutes with $A^s$. We could also assume the previous estimate i.e. $\norm{\bv-I_h(\bv)}_{V^s}^2 \leq C(h,\alpha)\norm{\bv}^2_{V^{s+\alpha}}$}}
Now, we estimate each of the terms as follows:

\begin{align}
    \mu \langle I_h(\bu), A^s\bv\rangle &\leq \frac{\mu  \Calpha }{\nu}\norm{I_h(\bu)}_{L^2}^2 + \frac{\nu}{8  \Calpha }\norm{\bv}_{V^{2s}}^2\\ \notag
    &\leq \frac{\mu  \Calpha }{\nu}\norm{I_h(\bu)}_{L^2}^2 + \frac{\nu}{8}\norm{\bv}_{V^{s+\alpha}}^2,
\end{align}
also,
\begin{align}
    \abs{\langle\vect{f},A^s \bv\rangle}    & \leq  \norm{\vect{f}}_{L^2} \norm{\bv}_{V^{2s}}\\ \notag
                            & \leq    C_{\alpha,\nu}  \norm{\vect{f}}_{L^2}^2 + \frac{\nu}{8  \Calpha }\norm{\bv}_{V^{2s}}^2\\ \notag
                            & \leq    C_{\alpha,\nu}  \norm{\vect{f}}_{L^2}^2 + \frac{\nu}{8}\norm{\bv}_{V^{{s+\alpha}}}^2,
\end{align}

% Now we estimate these terms as follows instead:
\noindent
and finally,
\begin{align}
    -\mu \langle I_h(\bv), A^s\bv \rangle &=  \mu \langle\bv - I_h(\bv), A^s\bv\rangle - \mu \langle\bv, A^s\bv\rangle,\\ \notag
    &= \mu \langle A^{s/2}(\bv - I_h(\bv)), A^{s/2}\bv\rangle - \mu\langle\bv, A^s\bv\rangle,\\ \notag
    &= \mu \langle A^{s/2}\bv - I_h(A^{s/2}\bv)), A^{s/2}\bv\rangle - \mu\langle\bv, A^s\bv\rangle,\\ \notag
    & \leq \frac{\mu c_{0,\alpha} h^2  \Calpha }{2}\norm{A^{s/2}\bv}_{V^\alpha}^2 + \frac{\mu}{2}\norm{\bv}^2_{V^{s}} - \mu \norm{\bv}^2_{V^{s}},\\ \notag
    &\leq \frac{\nu}{2} \norm{\bv}_{V^{s+\alpha}}^2.
\end{align}

% \begin{remark}
We note that moving $A^{s/2} \bv$ onto the interpolant is necessary as otherwise one ends up with an estimate of the form:
\begin{align}
    -\mu\langle I_h(\bv), A^s\bv \rangle \leq \frac{\nu}{4}\norm{\bv}_{V^{2s}}^2.
\end{align}
Such an estimate will not be useful as we want to absorb the $V^{2s}$ term into the viscous term on the left side of \Cref{inductive_estimate}. 
This is because we fail to absorb this term for $s > \alpha$, which will be required to obtain the full result of \Cref{thm - classical Existence}.

% \end{remark}

% \begin{align}
%     \mu (I_h(\bu), A^s) &=\mu (A^{s/2}I_h(\bu),A^{s/2}\bv)\\ \notag
%     &\leq \frac{\mu}{\nu}\norm{I_h(\bu)}_{V^s}^2 + \frac{\nu}{4}\norm{\bv}_{V^{s}}^2
% \end{align}

It remains now to estimate the nonlinear term. This can be done as follows:
\begin{align}
    \abs{\langle B(\bv,\bv),A^s \bv \rangle} 
    &= 
        \abs{ 
            \langle \Lambda^{s}B(\bv,\bv), \Lambda^{s} \bv \rangle
            }
    \\ \notag
    &\leq C\norm{\Lambda^{s}B(\bv,\bv)}_{L^{4/3}}\norm{\Lambda^{s}\bv }_{L^4}\\ \notag
    &\leq C\left( \norm{\Lambda^{s}\bv}_{L^4}\norm{\nabla \bv}_{L^2} + \norm{\bv}_{L^4}\norm{\Lambda^{s}\nabla \bv}_{L^2} \right) \norm{\Lambda^{s}\bv }_{L^4}\\ \notag
    &= C \norm{A^{s/2}\bv}^2_{L^4}\norm{\nabla \bv}_{L^2} + C\norm{\bv}_{L^4}\norm{A^{s/2}\nabla \bv}_{L^2} \norm{A^{s/2}\bv }_{L^4}.
    % &\leq C \norm{A^{s/2}\bv}^2_{L^4}\norm{\nabla \bv}_{L^2} + \norm{\bv}_{L^4}\norm{A^{s/2}\nabla \bv}_{L^2} \norm{A^{s/2}\bv }_{L^4}\\ \notag
\end{align}

We note that above, we have utilized the Kato-Ponce inequality, which is applicable for Riesz derivatives, see, e.g., \cite{bae2015gevrey}. 
From here we now perform additional estimates on the right-hand side
\begin{align}
    C\norm{A^{s/2}\bv}^2_{L^4}\norm{\nabla \bv}_{L^2} 
    &\leq C\norm{A^{s/2}\bv}^{2\theta}_{L^2}\norm{A^{s/2}\bv}^{2(1-\theta)}_{V^\alpha}\norm{\bv}_{V}\\ \notag
    &\leq C\norm{\bv}^{2\theta}_{V^s}
    \norm{\bv}_{V^{s+\alpha}}^{2(1-\theta)} 
    \norm{\bv}_{V}\\ \notag
    &\leq   C_{\alpha,\nu} 
    \norm{\bv}_{V}^{\frac{1}{\theta}} 
    \norm{\bv}_{V^s}^2 
     + \frac{\nu}{24} \norm{\bv}_{V^{s+\alpha}}^2,
\end{align}
and
\begin{align}
&\quad
    C\norm{\bv}_{L^4}\norm{A^{s/2}\nabla \bv}_{L^2} \norm{A^{s/2}\bv }_{L^4} 
    \\&\leq\notag C\norm{\bv}_{V^\alpha}\norm{\bv}_{V^{s+1}}\norm{A^{s/2}\bv }_{L^2}^{\theta} \norm{A^{s/2}\bv }_{V^\alpha}^{1-\theta}\\ \notag
    &\leq C_{\alpha}  \norm{\bv}_{V^\alpha}\norm{\bv}_{V^{s+\alpha}}\norm{A^{s/2}\bv }_{L^2}^{\theta} \norm{A^{s/2}\bv }_{V^\alpha}^{1-\theta}\\ \notag
    &\leq \frac{\nu}{24} \norm{\bv}_{V^{s+\alpha}}^2 +   C_{\alpha,\nu}  \norm{\bv}_{V^\alpha}^2\norm{A^{s/2}\bv }_{L^2}^{2\theta} \norm{A^{s/2}\bv }_{V^\alpha}^{2(1-\theta)}\\ \notag
    &\leq \frac{2\nu}{24} \norm{\bv}_{V^{s+\alpha}}^2 +   C_{\alpha,\nu} \norm{\bv}_{V^\alpha}^{\frac{2}{\theta}}\norm{\bv }_{V^s}^{2}.
\end{align}

Combining these estimates we obtain:
\begin{align}
     \abs{\langle B(\bv,\bv),A^s \bv \rangle} &\leq \frac{\nu}{8}\norm{\bv}_{V^{s+\alpha}}^2 + C_{\alpha,\nu}(\norm{\bv}_{V}^{\frac{1}{\theta}} + \norm{\bv}_{V^\alpha}^{\frac{2}{\theta}})\norm{\bv}_{V^{s}}^2.
\end{align}
Here $\theta = 1 - \frac{d}{4\alpha}$. 
% Note that the last ineqality comes from Young's inequality with $p  = \frac{1}{\theta}$ and $q = \frac{1}{1-\theta}$. The constant $  \Cnu$ is given explicitly by 
% \begin{align}
%   \Cnu = \left(\frac{\nu}{8}\right)^{-\frac{d}{4\alpha}}
%     %   \Cnu = \left(1-\frac{d}{4\alpha}\right)\left(\nu^{-\frac{16\alpha^2}{4\alpha n - n^2}}\left(\frac{d}{2\alpha}\right)^{\frac{16\alpha^2}{4\alpha n - n^2}}\right)
% \end{align}
% We note that the value of $  \Cnu$ is always defined as $4\alpha n -n^2 \leq  0$ for $\alpha \geq \frac{d}{4} + \frac{1}{2} > \frac{d}{4}$. 
% Additionally we note that the value of $C_s$ comes from counting the number of all possible mixed derivatives up to order s, as we applied Leibniz rule in the first step of the estimation.
% For simplicity we will combine the values of the constants into a single constant $K$, obfuscating the dependence on  each parameter for ease of notation:
% \begin{align}
%     K = K(s,\lambda,\nu,\alpha) := (C_s C_{\alpha,\nu})^{\frac{1}{\theta}}
% \end{align}
% \todo[inline]{\collin{This should be cleaned up. The Young's inequality looks nasty and should be left out with the justification summed up in writing (as it is already).}}
% \todo[inline]{\collin{Check constant that arises from Young's. See if there is any nice cancellation. Jiahung's paper used $  \Cnu$, but there is probably a dependence on $\alpha$ and $n$, unless there is something I can't see.}}
Thus we obtain the following bound.
\begin{align}
    \frac{1}{2}\dv{~}{t}\norm{\bv}^2_{V^{s}} 
    + \frac{\nu}{8}\norm{\bv}^2_{V^{s+\alpha}} 
    \leq K(\norm{\bv}_{V^{1}}^{\frac{1}{\theta}} + \norm{\bv}_{V^{\alpha}}^{\frac{2}{\theta}}) \norm{\bv}_{V^{s}}^{2} \\
    \quad + \frac{\mu  \Calpha }{\nu}\norm{I_h(\bu)}^2_{L^2} 
    +   C_{\alpha,\nu}  \norm{\vect{f}}^2_{L^2}.
    % (\frac{\mu}{2}+C\norm{\bv}_{V}  \Cnu )\norm{\bv}_{V^{s}}^2 + \frac{\mu}{\nu}\norm{I_h(\bu)}^2_{V^s} 
\end{align}

Integrating in time we now obtain:    
\begin{align}
    \norm{\bv(t)}^2_{V^{s}} + \frac{\nu}{4}\norm{\bv}_{L^2(0,T;V^{s+\alpha})} 
    &\leq \norm{\bv_0}^2_{V^s} 
    +  C_{\alpha,\nu} M\norm{\bv}_{L^2(0,T;V^{s})}^{2}\\ \notag
    &\quad+ \frac{2\mu  \Calpha }{\nu}\norm{I_h(\bu)}^2_{L^2(0,T;H)} + 2  C_{\alpha,\nu}  \norm{\vect{f}}^2_{L^2(0,T;H)}.
\end{align}

Here $M>0$ is a constant such that $\norm{\bv}_{V}^{\frac{1}{\theta}} + \norm{\bv}_{V^\alpha}^{\frac{2}{\theta}} \leq M$. We know from before that $\bv \in L^\infty(0,T;V)\cap L^\infty(0,T;V^\alpha)$, so we know that there exists such an $M$.
Thus we obtain that $\bv \in L^\infty(0,T;V^s)\cap L^2(0,T;V^{s+\alpha})$ so long as $\bv \in L^\infty(0,T;V)\cap L^2(0,T;V^s)$ with $\bv_0 \in V^s$. 
We can thus iterate $s$ values to obtain that 
% Therefore we can iterate with $s = \alpha$ and $s = 2\alpha$ to obtain that 
$\bv \in L^\infty(0,T;V^{2\alpha})\cap L^2(0,T;V^{3\alpha})$ for initial data $\bv_0 \in V^{2\alpha}$.

We note that the above formal proof utilizes the following frequently, for various values of $s\geq 1$:
\begin{align}
    \left(\bv_t, A^s\bv\right) = \frac{1}{2}\dv{~}{t}\norm{\bv}_{V^s}^2. \label{LM-Lemma}
\end{align}
For this statement to be true we require $\bv_t \in L^2(0,T;V^{-s})$.
% If we make the argument above rigorous, by way of the Galerkin method, we will obtain that the 
To show this we can return to the equation \cref{eq: NSE AOT functional} and show that each term is bounded in $L^2(0,T;V^{-s})$, under the additional assumption that $\bv \in L^\infty(0,T;V^{2\alpha})$. 
For the sake of simplicity, we bound only the nonlinear term, as it is straightforward to achieve such estimates for each of the linear terms, as we do above in the proof of \Cref{thm: weak}.

Let $\bw \in V^{s}$ be given such that $\norm{\bw}_{V^s} = 1$.  Then
\begin{align}
\abs{\left\langle B(\bv,\bv),\bw\right\rangle}
    &\leq C\norm{\bw}_{L^2}\norm{\bv}_{L^\infty}\norm{\nabla\bv}_{L^2}\\ \notag
    &\leq C_s \norm{\bw}_{V^s} \norm{\bv}_{V^{2\alpha}}\norm{\bv}_{V}
    \\ \notag
    &\leq C_s \norm{\bv}_{V^{2\alpha}}\norm{\bv}_{V}.
\end{align}
\noindent
Hence, since $\bv \in L^\infty(0,T;V^{2\alpha})\cap L^2(0,T;V)$, it follows that \newline$B(\bv,\bv) \in L^2(0,T;V^{-s})$, and hence from the equation, $\bv_t \in L^2(0,T;V^{-s})$.

We note the above logic may at first appear circular, as the argument showing that $\bv \in L^\infty(0,T;V^{2\alpha})$ requires \cref{LM-Lemma}. 
However, this assumption can be justified rigorously in the standard way using the Galerkin method.
Specifically, the above proof that $\bv\in L^\infty(0,T;V^{2\alpha})$ can be applied to a sequence of solutions $\bvn$ to the associated Galerkin system \cref{galerkin system} without modification. 
This will yield a sequence of solutions $\bvn$ uniformly bounded in $L^\infty(0,T;V^{2\alpha})$, which can be used to justify the above bound in $L^2(0,T;V^{-s})$ rigorously. 
% that although the formal argument requires the Lions-Magenes lemma to achieve this assumption, this can be made 
% a statement is necessary
\end{proof}

We now move to finish the proof of global regularity by establishing uniqueness and continuous dependence on data for classical solutions $\bv$ of \cref{eq: F AOT}.

% \subsection{Uniqueness of classical Solutions}

\begin{theorem}
Assume the hypotheses of \Cref{thm: weak}, and that $\bv_0 \in V^{2\alpha}$ (so that \Cref{thm - classical Existence} applies).  Then \cref{eq: NSE AOT functional} admits a unique classical solution $\bv$ such that $\bv\in L^\infty(0,T;V^{2\alpha})\cap L^2(0,T;V^{3\alpha})$ and $\frac{d\bv}{dt} \in L^\frac{4\alpha}{d}(0,T;V^{-1})$.  Moreover, solutions depend continuously on the initial data in the $L^\infty(0,T;H)$ and $L^2(0,T;V^\alpha)$ norms.
\end{theorem}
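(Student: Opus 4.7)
The plan is to prove uniqueness and continuous dependence simultaneously via an energy estimate on the difference of two classical solutions. Let $\bv_1, \bv_2$ be two classical solutions of \cref{eq: NSE AOT functional} (with the same observed solution $\bu$ and forcing $\bbf$) corresponding to initial data $\bv_{0,1}, \bv_{0,2} \in V^{2\alpha}$. Setting $\bw := \bv_1 - \bv_2$ and subtracting the two equations, one obtains
\begin{align*}
\bw_t + B(\bw, \bv_1) + B(\bv_2, \bw) + \nu A^\alpha \bw = -\mu P_\sigma I_h(\bw),
\end{align*}
with initial condition $\bw(0) = \bv_{0,1}-\bv_{0,2}$. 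Pairing with $\bw$ in $H$, the term $b(\bv_2,\bw,\bw)$ vanishes by \cref{symm2}, leaving only $b(\bw,\bv_1,\bw)$ and the feedback contribution to estimate.

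For the nonlinear term, I would apply \cref{B:424} with the triple $(\bw,\bv_1,\bw)$ to get
\begin{align*}
|b(\bw,\bv_1,\bw)| \leq C \norm{\bw}_{L^2}^{2\theta}\norm{\bw}_{V^\alpha}^{2(1-\theta)}\norm{\bv_1}_{V},
\end{align*}
then use Young's inequality with exponents $1/\theta$ and $1/(1-\theta)$ to absorb the $V^\alpha$-factor:
\begin{align*}
|b(\bw,\bv_1,\bw)| \leq \frac{\nu}{4}\norm{\bw}_{V^\alpha}^2 + C_{\alpha,\nu}\norm{\bv_1}_{V}^{1/\theta}\norm{\bw}_{L^2}^2.
\end{align*}
The feedback term is handled exactly as in the proof of \Cref{thm: weak}, using \cref{eq: interp} and choosing $h$ so that $\mu c_{0,\alpha} h^2 \leq \nu$:
\begin{align*}
-\mu(I_h(\bw),\bw) \leq \frac{\nu}{4}\norm{\bw}_{V^\alpha}^2 - \frac{\mu}{2}\norm{\bw}_{L^2}^2.
\end{align*}
Combining the estimates yields the differential inequality
\begin{align*}
\dv{~}{t}\norm{\bw}_{L^2}^2 + \nu \norm{\bw}_{V^\alpha}^2 \leq C_{\alpha,\nu}\norm{\bv_1}_{V}^{1/\theta}\norm{\bw}_{L^2}^2.
\end{align*}

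Since by \Cref{thm - classical Existence} we have $\bv_1 \in L^\infty(0,T;V^{2\alpha}) \subset L^\infty(0,T;V)$, the function $t \mapsto \norm{\bv_1(t)}_V^{1/\theta}$ is bounded on $[0,T]$. Gr\"onwall's inequality then gives
\begin{align*}
\norm{\bw(t)}_{L^2}^2 \leq \norm{\bv_{0,1}-\bv_{0,2}}_{L^2}^2 \exp\!\left(C_{\alpha,\nu}\int_0^T\!\norm{\bv_1}_V^{1/\theta}\,ds\right),
\end{align*}
which establishes continuous dependence in $L^\infty(0,T;H)$ and, as a special case, uniqueness. Continuous dependence in $L^2(0,T;V^\alpha)$ follows by integrating the differential inequality in time and inserting the $L^\infty(0,T;H)$ bound just obtained. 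Finally, the time-derivative regularity $\frac{d\bv}{dt}\in L^{4\alpha/d}(0,T;V^{-1})$ is a direct consequence of the argument already carried out in \Cref{thm: weak}: each of the linear terms is bounded in $L^2(0,T;V^{-\alpha})$, while the nonlinear term is bounded in $L^{4\alpha/d}(0,T;V^{-\alpha})$ via \cref{Ladyzhenskaya}, and $V^{-\alpha}\hookrightarrow V^{-1}$ since $\alpha\geq 1$.

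The main (and only) technical subtlety is verifying that the Young's exponent $1/\theta$ paired with $\norm{\bv_1}_V^{1/\theta}$ is controllable, which reduces to having $\bv_1$ in $L^\infty(0,T;V)$; this is supplied gratis by \Cref{thm - classical Existence}. Apart from this bookkeeping, the argument is the standard Lipschitz-in-data energy estimate.
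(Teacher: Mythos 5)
Your proposal is correct and follows essentially the same route as the paper: form the difference equation for $\bw=\bv_1-\bv_2$, pair with $\bw$, kill one trilinear term by antisymmetry, absorb the interpolant and dissipation, and close with Gr\"onwall to get uniqueness and continuous dependence simultaneously. The only (harmless) deviation is in the surviving trilinear term: the paper bounds $b(\bw,\bw,\bv_2)$ via Agmon's inequality, so its Gr\"onwall constant involves $\norm{\bv_2}_{L^\infty(0,T;V^{2\alpha})}$, whereas you use the interpolation bound \cref{B:424} and Young's inequality, yielding a constant depending only on $\norm{\bv_1}_{L^\infty(0,T;V)}^{1/\theta}$ --- the same estimate the paper uses later in \Cref{thm-L2} --- which is a slightly more economical but equivalent choice.
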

\begin{proof}
Existence of classical solutions to \cref{eq: NSE AOT functional} is already given by \Cref{thm - classical Existence}.
Let $\bv_1$ and $\bv_2$ be two such solutions with initial data $\bv_1(0) = \bv_2(0)$, respectively. Let $\bw := \bv_1 - \bv_2$ and note that $\bw$ satisfies
\begin{align}
    \bw_t + \nu A^\alpha \bw + B(\bv_1,\bw) + B(\bw,\bv_2) - B(\bw,\bw) = -\mu I_h(\bw).\label{uniqueness}
\end{align}

We now take the action of \cref{uniqueness} on $\bw$ to obtain the following equations
\begin{align}
&\quad
    \frac{1}{2}\dv{~}{t}\norm{\bw}^2_{L^2} + \nu\norm{\bw}_{V^\alpha}^2 + b(\bv_1,\bw, \bw) + b(\bw, \bv_2, \bw) - b(\bw,\bw, \bw) 
    \\&=\notag
    -\mu \langle I_h(\bw),\bw\rangle,
\end{align}
where we used the Lions-Magenes Lemma to pull out the time derivative.
From \cref{symm2} we have that $b(\bv_1,\bw,\bw) = b(\bw,\bw,\bw) = 0$, thus only one nonlinear term remains.  
% We estimate it as follows:
% \begin{align}
%     % \abs{b(\bv_1,\bw, \bw)} \leq C\norm{\bw}^2_{L^2} + \frac{\nu}{4}\norm{\bw}^2_{V^{\alpha}}\\ \notag
%     \abs{b(\bw,\bv_2, \bw)} \leq C\norm{\bw}^2_{V^{s}} + \frac{\nu}{4}\norm{\bw}^2_{V^{\alpha}}.
% \end{align}
Let $K = 2\max(\norm{\bv_1}_{L^\infty(0,T;V^{2\alpha})}, \norm{\bv_2}_{L^\infty(0,T;V^{2\alpha})})$.
Note that $\norm{\bw}_{V^{2\alpha}} \leq K$ as $\bw = \bv_1 - \bv_2$. 
Estimating the nonlinear and interpolant terms, we obtain
% \begin{align}
% \abs{b(\bv_1,\bw,\bw)} &\leq C\norm{\bv_1}_{L^\infty}\norm{\grad \bw}_{L^2}\norm{\bw}_{L^2}\\ \notag
% &\leq C\norm{\bv_1}_{V^{2\alpha}}\norm{\bw}_{V}\norm{\bw}_{L^2}\\ \notag
% &\leq \frac{\nu}{4}\norm{\bw}^2_{V} +   \Cnu\norm{\bv_1}^2_{V^{2\alpha}}\norm{\bw}^2_{L^2}\\ \notag
% &\leq \frac{\nu}{4}\norm{\bw}^2_{V} +   \Cnu K^2\norm{\bw}^2_{L^2}
% \end{align}
\begin{align}
\abs{b(\bw,\bv_2,\bw)} & = \abs{b(\bw,\bw,\bv_2)}\\ \notag
&\leq C\norm{\bv_2}_{L^\infty}\norm{\grad \bw}_{L^2}\norm{\bw}_{L^2}\\ \notag
&\leq C\norm{\bv_2}_{V^{2\alpha}}\norm{\bw}_{V}\norm{\bw}_{L^2}
\leq  \Calpha\norm{\bv_2}_{V^{2\alpha}}\norm{\bw}_{V^\alpha}\norm{\bw}_{L^2}\\ \notag
&\leq \frac{\nu}{4}\norm{\bw}^2_{V^\alpha} +  C_{\alpha,\nu}\norm{\bv_2}^2_{V^{2\alpha}}\norm{\bw}^2_{L^2}\\ \notag
&\leq \frac{\nu}{4}\norm{\bw}^2_{V^\alpha} +  C_{\alpha,\nu} K^2\norm{\bw}^2_{L^2},
\end{align}
and 
\begin{align}
    -\mu (I_h(\bw),\bw) &= \mu(\bw - I_h(\bw), \bw) - \mu(\bw,\bw)\\ \notag
    &\leq \mu\norm{\bw - I_h(\bw)}_{L^2}\norm{\bw}_{L^2}- \mu\norm{\bw}_{L^2}^2\\ \notag
    &\leq \frac{\mu}{2}\norm{\bw - I_h(\bw)}_{L^2}^2 +  \frac{\mu}{2}\norm{\bw}_{L^2}^2- \mu\norm{\bw}_{L^2}^2\\ \notag
    &\leq \frac{\mu c_{0,\alpha} h^2  \Calpha }{2}\norm{\bw}_{V^\alpha}^2\\ \notag
    &\leq \frac{\nu}{2}\norm{\bw}_{V^\alpha}^2.
    % &\leq \mu\norm{\bw}_{V^\alpha}^2 +  \frac{\nu}{4}\norm{\bw}_{L^2}^2- \mu\norm{\bw}_{L^2}^2
\end{align}

% \todo[inline]{\collin{Rewriting like this is wrong. We should be taking the inner product with $A\bw$, then we have that the $V$ on the left will match with the $V$ on the right. In the center we will be left with an $V^2$ norm, which absorbs into the viscous term. However, this does change the nonlinear terms we have to estimate.}}
% Now we additionally have that $\abs{b(\bv_1,\bw,\bw)} = \abs{b(\bw,\bw,\bw)} = 0$, by the antisymmetric property of $b$. 
% Thus we have the following inequality for $\bw$:
% \begin{align}
%     \frac{1}{2}\dv{~}{t}\norm{\bw}_{L^2}^2 +\nu\norm{\bw}_{V^\alpha}^2 \leq 2 C_{\alpha,\nu} K^2 \norm{\bw}_{L^2}^2 + \nu\norm{\bw}_{V^\alpha}^2.
% \end{align}

Gathering these bounds and rearraging, we obtain,
\begin{align}\label{w_bound}
\dv{~}{t}\norm{\bw}_{L^2}^2+2\nu\norm{\bw}_{V^\alpha}^2\leq 2 C_{\alpha,\nu} K^2 \norm{\bw}_{L^2}^2.
\end{align}

Dropping the viscous term and using Gr\"onwall's inequality yields $\norm{\bw(t)}_{V^s}^2 \leq \norm{\bw(0)}_{L^2}^2e^{2 C_{\alpha,\nu} K^2t}$. Using this in \cref{w_bound} and integrating in time yields
\begin{align}\label{w_bound_int}
\norm{\bw(t)}_{L^2}^2+2\nu\int_0^t\norm{\bw(s)}_{V^\alpha}^2\,ds\leq  \norm{\bw(0)}_{L^2}^2(e^{2 C_{\alpha,\nu} K^2t}+1).
\end{align}

If $\bv_1(0)\equiv\bv_2(0)$, then $\bw(0) \equiv 0$, and it follows that that $\bv_1 \equiv \bv_2$.  Hence solutions are unique.
Moreover, \cref{w_bound_int} implies continuous dependence on the initial data in the desired norms.
\end{proof}

\section{Convergence to Reference Solution}
\label{convergence}

\noindent
Now that we have established well-posedness of the solutions to \cref{eq: NSE AOT functional} we work to show that the solution $\bv$ converges to the reference solution $\bu$. To do this we will set $\bw = \bu - \bv$. We note that $\bw$ solves the following equation:
\begin{align}\label{eq: w}
    \bw_t + B(\bu, \bu) - B(\bv, \bv) + \nu A^\alpha \bw = \mu I_h(\bw).
\end{align}

We can then rewrite the nonlinear terms as 
\begin{align}
B(\bu, \bu) - B(\bv, \bv) = B(\bu, \bw) + B(\bw, \bu) - B(\bw, \bw).
\end{align}

Thus we obtain the following equation for $\bw$:

\begin{align}
    \bw_t + B(\bu, \bw) + B(\bw, \bu) - B(\bw, \bw)+ \nu A^\alpha \bw = \mu I_h(\bw).\label{Difference}
\end{align}

This equation will be our starting place for our convergence results, detailed below.

% \subsection{$L^2$ Convergence}

\begin{theorem}\label{thm-L2}
Let $\bu$ and $\bv$ be classical solutions to \cref{eq: NSE functional,eq: NSE AOT functional}, respectively, with initial data $\bu_0, \bv_0 \in V^{2\alpha}$ and $\bv$ defined on $[t_0,\infty)$.
Suppose $I_h$ is a linear interpolant satisfying \cref{eq: interp}, $\bbf\in H$ is a time-independent forcing, and $\mu,h$ satisfy the following conditions:
\begin{align}
    \mu > 2 C_{\alpha,\nu}\norm{\bu}_{V}^{\frac{1}{\theta}},
\end{align}
(where $\theta = 1- \frac{d}{4\alpha}$)
and
\begin{align}
            \mu h^2 c_{0,\alpha} \leq \nu, 
\end{align}
Then $\norm{\bu-\bv}_{L^2}$ decays to $0$ exponentially fast in time.
% \begin{align} \lambda_{m+1} \geq 6\frac{\mu}{\nu}, \end{align}

% \begin{align} \mu  \geq C \frac{M_1^2}{\nu}, \end{align}
% % \begin{align} \mu \tau < \frac{1}{K}\end{align} %Redundant with next line
% \begin{align}\left( e^{-\frac{\mu}{2}\tau} + K\mu\tau (1- e^{-\frac{\mu}{2}\tau})\right)e^{\frac{cM_1^2}{\nu}\kappa} < 1, \label{decay assumption} \end{align}

%  \begin{align} \label{kappa-bound}
%  \kappa  \leq \frac{C}{\mu}\min{\Bigg \{} & 1, \frac{\nu}{R}, \frac{\nu^2}{R^2}, \frac{\nu^{3/2}\mu^{1/2}}{M_0M_1},
%  \frac{\nu^2\lambda_1^{1/2}}{M_0M_1}, \frac{\nu^{1/2}\lambda_1^{1/2}}{\mu^{1/2}}, \frac{\nu^2 \lambda_1^2}{\mu^2}, \frac{\nu \mu}{M_1^2}\Bigg\},
%  \end{align}
%  then $\normL{\bu-\bv}$ decays to zero exponentially fast in time.
 % for any choice of $0< \tau \leq \kappa$,

%  Here $C>0$ is an absolute constant and $K>0 $, depending only on $\nu, \lambda_1, M_0, M_1$ and  absolute constants, is specified in \cref{constant:K}. 
\end{theorem}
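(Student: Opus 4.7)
The approach is to derive an $L^2$ energy inequality for $\bw := \bu - \bv$ starting from \cref{Difference}, combine the nonlinear and feedback estimates to extract a strictly negative prefactor on $\norm{\bw}_{L^2}^2$, and then apply Gr\"onwall's inequality. The two hypotheses $\mu c_{0,\alpha} h^2 \leq \nu$ and $\mu > 2C_{\alpha,\nu}\norm{\bu}_V^{1/\theta}$ will play complementary roles: the first absorbs an interpolation error into the dissipative term, while the second makes the effective exponential rate strictly positive.

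First I would take the $L^2$ inner product of \cref{Difference} with $\bw$. The trilinear terms $b(\bu,\bw,\bw)$ and $b(\bw,\bw,\bw)$ both vanish by \cref{symm2}, so only $b(\bw,\bu,\bw)$ survives. Applying \cref{B:424} with $\theta = 1-\tfrac{d}{4\alpha}$ gives
\begin{align*}
\abs{b(\bw,\bu,\bw)} \leq C\,\norm{\bu}_V \norm{\bw}_{L^2}^{2\theta}\norm{\bw}_{V^\alpha}^{2(1-\theta)},
\end{align*}
and a Young's inequality splits this into a piece $\tfrac{\nu}{4}\norm{\bw}_{V^\alpha}^2$ that can be absorbed into the dissipation together with $C_{\alpha,\nu}\norm{\bu}_V^{1/\theta}\norm{\bw}_{L^2}^2$.

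For the feedback term, the key manipulation is to write
\begin{align*}
-\mu\left(I_h(\bw),\bw\right) = \mu\left(\bw - I_h(\bw),\bw\right) - \mu\norm{\bw}_{L^2}^2,
\end{align*}
in order to extract the coercive contribution $-\mu\norm{\bw}_{L^2}^2$ explicitly. The cross term is controlled by Cauchy--Schwarz, \cref{eq: interp}, and Young's inequality; the resulting $\norm{\bw}_{V^\alpha}^2$ factor is absorbed into the dissipative term precisely thanks to $\mu c_{0,\alpha} h^2 \leq \nu$, which reduces the $L^2$ remainder to $\tfrac{\mu}{2}\norm{\bw}_{L^2}^2$ and hence leaves a net $-\tfrac{\mu}{2}\norm{\bw}_{L^2}^2$ after combining with the extracted coercive part. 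Putting everything together gives the differential inequality
\begin{align*}
\dv{~}{t}\norm{\bw}_{L^2}^2 + \left(\mu - 2C_{\alpha,\nu}\norm{\bu}_V^{1/\theta}\right)\norm{\bw}_{L^2}^2 \leq 0.
\end{align*}

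The main obstacle, rather than any single estimate, is that the decay coefficient depends on $\norm{\bu(t)}_V$ through $\norm{\bu}_V^{1/\theta}$; for a genuine time-uniform exponential decay rate one needs a uniform-in-time bound $\sup_{t \geq t_0}\norm{\bu(t)}_V \leq R$ so that the condition on $\mu$ translates into $\mu > 2C_{\alpha,\nu}R^{1/\theta}$ and the coefficient above is bounded below by a strictly positive constant. Establishing such a uniform bound for the $d$-dimensional NSE with fractional diffusion when $\alpha \geq \tfrac{d}{4}+\tfrac{1}{2}$ is precisely what is deferred to the appendix. Granted it, Gr\"onwall's inequality applied to the displayed bound gives $\norm{\bw(t)}_{L^2} \to 0$ at the claimed exponential rate.
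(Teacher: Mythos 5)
Your proposal is correct and follows essentially the same route as the paper: testing \cref{Difference} with $\bw$, killing $b(\bu,\bw,\bw)$ and $b(\bw,\bw,\bw)$ by \cref{symm2}, estimating the surviving term $b(\bw,\bu,\bw)$ via the $L^4$ interpolation inequality and Young's inequality, splitting the feedback term to extract $-\mu\norm{\bw}_{L^2}^2$ while absorbing the interpolation error into the dissipation using $\mu c_{0,\alpha}h^2\leq\nu$, and concluding with Gr\"onwall. Your observation that the rate must be made time-uniform via the a priori bound on $\norm{\bu(t)}_{V}$ from the appendix is a fair and correct reading of what the hypothesis on $\mu$ implicitly requires.
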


\begin{proof}
We begin by taking the inner product of \cref{eq: w} with $\bw$. This yields,
\begin{align}
    \frac{1}{2}\dv{~}{t}\norm{\bw}^2_{L^2} + b(\bu, \bw, \bw) + b(\bw, \bu, \bw) - b(\bw, \bw, \bw) + \norm{A^{\frac{\alpha}{2}}\bw}^2_{L^2} = -\mu \left\langle I_h(\bw), \bw \right\rangle.
\end{align}

Now, by symmetry we can eliminate two of the trilinear terms giving the following equation
\begin{align}
        \frac{1}{2}\dv{~}{t}\norm{\bw}^2_{L^2} + b(\bw, \bu, \bw) + \nu \norm{A^{\frac{\alpha}{2}}\bw}^2_{L^2} = -\mu \left\langle I_h(\bw), \bw \right\rangle.
\end{align}

We now move to estimate each of the terms, yielding the following estimates for the nonlinear and interpolant terms, respectively:
\begin{align}
   \abs{ (B(\bw,\bu),\bw) }
&\leq C\norm{\bw}^2_{L^4}\norm{\nabla \bu}_{L^2}\\ \notag 
&\leq C\norm{\bw}^{2\theta}_{L^2}\norm{\bw}_{V^\alpha}^{2(1-\theta)}\norm{\bu}_{V}\\ \notag
&\leq  C_{\alpha,\nu}\norm{\bu}_{V}^{\frac{1}{\theta}} \norm{\bw}^2_{L^2} + \frac{\nu}{2} \norm{\bw}^2_{V^\alpha},
\end{align}

and

\begin{align}
-\mu \left\langle I_h(\bw), \bw \right\rangle
&= \mu \left\langle \bw - I_h(\bw), \bw \right \rangle - \mu\left\langle \bw, \bw \right\rangle\\ \notag
&\leq \mu \norm{\bw - I_h(\bw)}_{L^2}\norm{\bw}_{L^2} - \mu \norm{\bw}^2_{L^2}\\ \notag
&\leq \frac{\mu}{2} \norm{\bw - I_h(\bw)}^2_{L^2} + \frac{\mu}{2}\norm{\bw}^2_{L^2} - \mu \norm{\bw}^2_{L^2}\\ \notag
&\leq \frac{\mu c_{0,\alpha} h^2}{2} \norm{\bw }_{V^\alpha}^2 + \frac{\mu}{2}\norm{\bw}^2_{L^2} - \mu \norm{\bw}^2_{L^2}\\ \notag
&\leq \frac{\nu}{2} \norm{\bw }_{V^\alpha}^2 - \frac{\mu}{2} \norm{\bw}_{L^2}^2.
\end{align}

Inserting these estimates and simplifying the resulting equations we now obtain
    \begin{align}
       \dv{~}{t}\norm{\bw}_{L^2}^2 + \left(\mu - 2 C_{\alpha,\nu}\norm{\bu}_{V}^{\frac{1}{\theta}}\right)\norm{\bw}^2_{L^2} \leq 0. 
    \end{align}
 Now, by Gr\"onwall inequality, we have that $\norm{\bw}^2_{L^2} \to 0$ exponentially fast in $H$, for sufficiently large $\mu$ and so $\bv \to \bu$ in $H$ exponentially fast in time.
 \end{proof}

\begin{theorem}\label{thm-Halpha}
Let $\bu$ and $\bv$ be classical solutions to \cref{eq: NSE functional,eq: NSE AOT functional}, respectively, with initial data $\bu_0, \bv_0 \in V^{2\alpha}$ and $\bv$ defined on $[t_0,\infty)$.
Suppose $I_h$ is a linear interpolant satisfying \cref{eq: interp}, $\bbf\in V^\alpha$ is a time-independent forcing, and $\mu,h$ satisfy the following conditions:
\begin{align}
    \mu
    \geq C_{\alpha,\nu}\left(4\sigma_\alpha^{2/\theta} + \sigma_{1}^{1/\theta} + 2\sigma_{1}^{2/\theta} + \sigma_{\alpha+1}^{\footnotemark}\right), 
            %     \biggl(& 
            %         \norm{\bu}_{L^\infty(0,T;V^\alpha)}^{\frac{2}{\theta}}
            %     +
            %         \norm{\bu}_{L^\infty(0,T;V^\alpha)}^{\frac{2}{\theta}}
            %     +
            %         \norm{\bu}_{L^\infty(0,T;V)}^{\frac{1}{\theta}}
            %     \\ \notag\notag
            %     &\quad+
            %         \norm{\bu}_{L^\infty(0,T;V^{\alpha + 1})}^{\frac{2}{\theta}}
            %     +
            %         \norm{\bw}_{L^\infty(0,T;V)}^{\frac{2}{\theta}}
            %     +
            %         \norm{\bw}_{L^\infty(0,T;V^\alpha)}^{\frac{2}{\theta}}
            % \biggr),
\end{align}\footnotetext{We note that $\sigma_{\alpha+1}$ is not given in \cref{thm - unif bounds} as we only require only the bound $\norm{\bu}_{V^{\alpha+1}}\leq \sigma_{\alpha+1}$, which can be readily extracted from the regularity of $\bu$ for this initial data.} 
and
\begin{align}
            \mu h^2 c_{0,\alpha} \leq \nu,
\end{align}
where $\sigma_{s}$ are given such that $\norm{\bv}_{V^s}\leq \sigma_s$ and $\norm{\bu}_{V^s}\leq \sigma_s$, as in \cref{thm - unif bounds}.
Then $\norm{\bu-\bv}_{V^\alpha}$ decays to $0$ exponentially fast in time.
\end{theorem}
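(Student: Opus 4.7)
The natural strategy is to adapt the $V^\alpha$-level estimate used to establish global regularity in \Cref{thm - classical Existence}, now applied to the difference $\bw := \bu - \bv$ which satisfies \cref{Difference}. First I would take the inner product of \cref{Difference} with $A^\alpha \bw$ (which is justified in the classical regularity class obtained from \Cref{thm - classical Existence}) and use the self-adjointness of $A^\alpha$ together with $\ip{A^\alpha \bw}{\bw_t} = \frac{1}{2}\dv{~}{t}\norm{\bw}_{V^\alpha}^2$ to arrive at the energy identity
\begin{align*}
\frac{1}{2}\dv{~}{t}\norm{\bw}_{V^\alpha}^2 + \nu\norm{\bw}_{V^{2\alpha}}^2 = -b(\bu,\bw,A^\alpha\bw) - b(\bw,\bu,A^\alpha\bw) + b(\bw,\bw,A^\alpha\bw) - \mu\ip{I_h(\bw)}{A^\alpha\bw}.
\end{align*}
The goal is then to show that, modulo the dissipative term, the right-hand side is bounded by $\Phi(t)\norm{\bw}_{V^\alpha}^2 - \frac{\mu}{2}\norm{\bw}_{V^\alpha}^2$ with $\Phi(t)$ uniformly dominated by $\mu/2$, so that Gr\"onwall yields exponential decay in $V^\alpha$.

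Next I would handle the nudging term by the same splitting used in \Cref{thm - classical Existence}, namely writing
\begin{align*}
-\mu\ip{I_h(\bw)}{A^\alpha \bw} = \mu\ip{A^{\alpha/2}\bw - I_h(A^{\alpha/2}\bw)}{A^{\alpha/2}\bw} - \mu\norm{\bw}_{V^\alpha}^2,
\end{align*}
where the commutation of $I_h$ with $A^{\alpha/2}$ is used exactly as in the existence argument. Cauchy--Schwarz together with \cref{eq: interp}, Young's inequality, and the condition $\mu c_{0,\alpha} h^2 \leq \nu$ then give the clean bound $\leq \frac{\nu}{2}\norm{\bw}_{V^{2\alpha}}^2 - \frac{\mu}{2}\norm{\bw}_{V^\alpha}^2$. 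For the three trilinear forms, each is rewritten as $\ip{\Lambda^\alpha B(\cdot,\cdot)}{\Lambda^\alpha\bw}$ and estimated by the Kato--Ponce inequality \cref{ineq:Kato-Ponce}, splitting the $\Lambda^\alpha$ onto either factor of $B$. Sobolev embedding and Gagliardo--Nirenberg interpolation \cref{Ladyzhenskaya} convert all Lebesgue norms of $\bw$ into interpolations between $\norm{\bw}_{V^\alpha}$ and $\norm{\bw}_{V^{2\alpha}}$, while the factors involving $\bu$ are absorbed using the uniform a priori bounds $\norm{\bu}_{V^s} \leq \sigma_s$ furnished by \Cref{thm - unif bounds}. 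The highest-regularity pieces are pushed into $\frac{\nu}{4}\norm{\bw}_{V^{2\alpha}}^2$ via Young, leaving contributions proportional to $\sigma_\alpha^{2/\theta}$, $\sigma_1^{1/\theta}$, and $\sigma_1^{2/\theta}$ in front of $\norm{\bw}_{V^\alpha}^2$, with $\theta = 1 - d/(4\alpha)$.

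The main obstacle will be the cubic term $b(\bw,\bw,A^\alpha\bw)$, which is where $\sigma_{\alpha+1}$ enters the hypothesis: since all three slots contain $\bw$, one cannot hope to close the estimate using only $\norm{\bw}_{V^\alpha}$ and $\norm{\bw}_{V^{2\alpha}}$ and so one must exploit $\norm{\bw}_{V^{\alpha+1}} \leq 2\sigma_{\alpha+1}$, analogously to the treatment of the term $\norm{\bv}_{V^{\alpha+1}}$ appearing in the estimate \cref{vvv_bnd} and its analogues in \Cref{thm - classical Existence}. Once this estimate is in place, combining all bounds gives
\begin{align*}
\dv{~}{t}\norm{\bw}_{V^\alpha}^2 + \left(\mu - C_{\alpha,\nu}\bigl(4\sigma_\alpha^{2/\theta} + \sigma_1^{1/\theta} + 2\sigma_1^{2/\theta} + \sigma_{\alpha+1}\bigr)\right)\norm{\bw}_{V^\alpha}^2 \leq 0,
\end{align*}
so the stated lower bound on $\mu$ guarantees a strictly positive damping coefficient, and Gr\"onwall's inequality concludes exponential decay of $\norm{\bw}_{V^\alpha}^2$.
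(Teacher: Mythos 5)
Your overall strategy matches the paper's proof: take the inner product of the difference equation with $A^\alpha\bw$ (the paper does this for general $A^s\bw$, $0<s\leq\alpha$, and sets $s=\alpha$ at the end), split the nudging term exactly as you describe, estimate the three trilinear forms via Kato--Ponce and Gagliardo--Nirenberg interpolation between $V^s$ and $V^{s+\alpha}$, absorb the highest-order pieces into the dissipation, and close with Gr\"onwall. That part is sound.

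However, you misattribute where $\sigma_{\alpha+1}$ enters, and the reasoning you give for it is not right. In the paper, the cubic term $b(\bw,\bw,A^\alpha\bw)$ \emph{is} closed using only $\norm{\bw}_{V^\alpha}$ and $\norm{\bw}_{V^{2\alpha}}$: Kato--Ponce plus interpolation yields a bound of the form $\frac{3\nu}{20}\norm{\bw}_{V^{2\alpha}}^2 + C\bigl(\norm{\bw}_{V}^{1/\theta}+\norm{\bw}_{V^\alpha}^{2/\theta}\bigr)\norm{\bw}_{V^\alpha}^2$, and the prefactors are then bounded by constants using the uniform bounds $\sigma_1,\sigma_\alpha$ on $\bu$ and $\bv$ separately (hence on $\bw$ by the triangle inequality). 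No $V^{\alpha+1}$ control of $\bw$ is needed there. The quantity $\sigma_{\alpha+1}$ instead arises in the cross term $b(\bw,\bu,A^\alpha\bw)$: the Kato--Ponce splitting of $\Lambda^\alpha(\bw\cdot\nabla\bu)$ unavoidably produces a factor $\norm{\Lambda^\alpha\nabla\bu}_{L^2}=\norm{\bu}_{V^{\alpha+1}}$, which is why the theorem's footnote specifies that only the bound $\norm{\bu}_{V^{\alpha+1}}\leq\sigma_{\alpha+1}$ on the \emph{reference} solution is required. Your accounting of the cross terms as contributing only $\sigma_\alpha^{2/\theta}$, $\sigma_1^{1/\theta}$, $\sigma_1^{2/\theta}$ therefore misses this contribution. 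This does not sink the argument --- carrying out the Kato--Ponce estimate on $b(\bw,\bu,A^\alpha\bw)$ honestly would force the $\norm{\bu}_{V^{\alpha+1}}$ factor to appear and the proof would still close under the stated hypothesis on $\mu$ --- but as written your plan both gives an incorrect justification for the $\sigma_{\alpha+1}$ hypothesis and omits the term that actually generates it.
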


\begin{proof}
To prove this theorem, we will instead show convergence in $V^s$, for arbitrary $0 < s \leq \alpha$. Doing so will show convergence in $V^\alpha$ with $s = \alpha$, but can be easily adjusted to show convergence in $V$. Moreover, if one assumed more regular initial data, one can extend this proof straightforwardly to obtain convergence in more regular spaces.
As before, we take the inner product of \cref{eq: w} with $A^s \bw$.
This yields the following equation:
\begin{align}
&\quad
    \frac{1}{2}\dv{~}{t}\norm{\bw}_{V^s}^2 + \nu \norm{\bw}^2_{V^{s+\alpha}} + b(\bu,\bw, A^s \bw) + b(\bw,\bu,A^s\bw) - b(\bw,\bw,A^s\bw) 
    \\&\qquad= \notag
    -\mu \left< I_h(\bw),A^s \bw \right>.
\end{align}

Consider now the following estimates of the nonlinear terms.  For the first nonlinear term,
% \begin{align}
% \abs{b(\bu,\bw,A^s\bw)} &= \abs{(A^{s/2}B(\bu,\bw),A^{s/2}\bw)}\\ \notag
% &\leq \norm{A^{s/2}B(\bu,\bw)}_{L^{4/3}}\norm{\bw}_{L^4}\\ \notag
% &\leq C\norm{A^{s/2}\bu}_{L^2}\norm{\nabla \bw}_{L^4}\norm{A^{s/2}\bw}_{L^4}\\ \notag
% &\quad+ C\norm{\bu}_{L^4}\norm{A^{s/2}\nabla\bw}_{L^2}\norm{A^{s/2}\bw}_{L^4}\\ \notag
% &\leq C\norm{\bu}_{V^s}\norm{\nabla \bw}_{L^4}\norm{A^{s/2}\bw}_{L^4}\\ \notag
% &\quad+ C\norm{\bu}_{L^4}\norm{\bw}_{V^{s+1}}\norm{A^{s/2}\bw}_{L^4}\\ \notag
% &\leq C\norm{\bu}_{V^s}\norm{\bw}_{V^\alpha+1}\norm{\bw}^\theta_{V^{s}}\norm{\bw}^{1-\theta}_{V^{s+\alpha}}\\ \notag
% &\quad+ C\norm{\bu}_{V^\alpha}\norm{\bw}_{V^{s+1}}\norm{\bw}_{V^{s}}^\theta \norm{\bw}_{V^{s+\alpha}}^{1-\theta}\\ \notag
% &\leq \frac{\nu}{D}\norm{\bw}_{V^{s+\alpha}}^2 + C\norm{\bu}^2_{V^s}\norm{\bw}^{2\theta}_{V^s}\norm{\bw}^{2(1-\theta)}_{V^{s+\alpha}}\\ \notag
% &\quad+ \frac{\nu}{D}\norm{\bw}^2_{V^{s+\alpha}}+ C\norm{\bu}_{V^\alpha}^2\norm{\bw}_{V^s}^{2\theta}\norm{\bw}_{V^{s+\alpha}}^{2(1-\theta)}\\ \notag
% &\leq \frac{\nu}{D}\norm{\bw}_{V^{s+\alpha}}^2 + C\norm{\bu}^{\frac{2}{\theta}}_{V^s}\norm{\bw}^{2}_{V^s} + \frac{\nu}{D}\norm{\bw}^{2}_{V^{s+\alpha}}\\ \notag
% &\quad+ \frac{\nu}{D}\norm{\bw}^2_{V^{s+\alpha}}+ C\norm{\bu}_{V^\alpha}^{\frac{2}{\theta}}\norm{\bw}_{V^s}^{2} + \frac{\nu}{D}\norm{\bw}_{V^{s+\alpha}}^{2}\\ \notag
% &\leq \frac{4\nu}{D}\norm{\bw}^2_{V^{s+\alpha}} 
% + C\left( \norm{\bu}_{V^s}^{(2/\theta)} + \norm{\bu}_{V^\alpha}^{\frac{2}{\theta}}\right)\norm{\bw}_{V^s}^2    
% \end{align}
{\allowdisplaybreaks
\begin{align}
\abs{b(\bu,\bw,A^s\bw)} 
    &= 
        \abs{(\Lambda^{s}B(\bu,\bw),\Lambda^{s}\bw)}\\ \notag
    &\leq 
        \norm{\Lambda^{s}B(\bu,\bw)}_{L^{4/3}}
        \norm{\Lambda^{s} \bw}_{L^4}\\ \notag
    &\leq C
        \norm{\Lambda^{s} \bu}_{L^2}
        \norm{\nabla \bw}_{L^4}
        \norm{\Lambda^{s}\bw}_{L^4}
    \\ \notag
        &\quad+ C \notag
            \norm{\bu}_{L^4}
            \norm{\Lambda^{s}\nabla\bw}_{L^2}
            \norm{\Lambda^{s}\bw}_{L^4}
        \\ \notag
    &= C
        \norm{A^{s/2} \bu}_{L^2}
        \norm{\nabla \bw}_{L^4}
        \norm{A^{s/2}\bw}_{L^4}
    \\ \notag
        &\quad+ C \notag
            \norm{\bu}_{L^4}
            \norm{A^{s/2}\nabla\bw}_{L^2}
            \norm{A^{s/2}\bw}_{L^4}
        \\ \notag
        &\leq C
            \norm{\bu}_{V^s}
            \norm{\nabla \bw}_{L^4}
            \norm{A^{s/2}\bw}_{L^4}
        \\ \notag&\quad+ C \notag
            \norm{\bu}_{L^4}
            \norm{\bw}_{V^{s+1}}
            \norm{A^{s/2}\bw}_{L^4}
        \\ \notag
        &\leq C
            \norm{\bu}_{V^s}
            \norm{\bw}_{V^\alpha+1}
            \norm{\bw}^\theta_{V^{s}}
            \norm{\bw}^{1-\theta}_{V^{s+\alpha}}
      \\ \notag &\quad+ C \notag
            \norm{\bu}_{V^\alpha}
            \norm{\bw}_{V^{s+1}}
            \norm{\bw}_{V^{s}}^\theta
            \norm{\bw}_{V^{s+\alpha}}^{1-\theta}
        \\ \notag
    &\leq
        \frac{\nu}{20}
            \norm{\bw}_{V^{s+\alpha}}^2
        + C_\nu
            \norm{\bu}^2_{V^s}
            \norm{\bw}^{2\theta}_{V^s}
            \norm{\bw}^{2(1-\theta)}_{V^{s+\alpha}}
        \\ \notag
        &\quad+\notag
            \frac{\nu}{20}\norm{\bw}^2_{V^{s+\alpha}}
        + C_\nu
            \norm{\bu}_{V^\alpha}^2
            \norm{\bw}_{V^s}^{2\theta}
            \norm{\bw}_{V^{s+\alpha}}^{2(1-\theta)}
        \\ \notag
    &\leq
            \frac{\nu}{20}\norm{\bw}_{V^{s+\alpha}}^2
        + C
            \norm{\bu}^{\frac{2}{\theta}}_{V^s}
            \norm{\bw}^{2}_{V^s}
        +
            \frac{\nu}{20}\norm{\bw}^{2}_{V^{s+\alpha}}
        \\ \notag
        &\quad+\notag
            \frac{\nu}{20}\norm{\bw}^2_{V^{s+\alpha}}
        + C
            \norm{\bu}_{V^\alpha}^{\frac{2}{\theta}}
            \norm{\bw}_{V^s}^{2}
        +
            \frac{\nu}{20}\norm{\bw}_{V^{s+\alpha}}^{2}
        \\ \notag
    &\leq 
            \frac{4\nu}{20}\norm{\bw}^2_{V^{s+\alpha}}
        + C
            \left( 
                    \norm{\bu}_{V^s}^{\frac{2}{\theta}} 
                + 
                    \norm{\bu}_{V^\alpha}^{\frac{2}{\theta}}
            \right)
            \norm{\bw}_{V^s}^2,
\end{align}
}

% \begin{align}
%     \abs{b(\bw,\bu,A^s\bw)} &= \abs{(A^{s/2}B(\bw,\bu),A^{s/2}\bw)}\\ \notag
%     &\leq \norm{A^{s/2}B(\bw,\bu)}_{L^{4/3}}\norm{A^{s/2}\bw}_{L^4}\\ \notag
% &\leq C\norm{A^{s/2}\bw}_{L^4}^2\norm{\nabla \bu}_{L^2}\\ \notag
% &\quad+ C\norm{\bw}_{L^4}\norm{A^{s/2}\nabla\bu}_{L^2}\norm{A^{s/2}\bw}_{L^4}\\ \notag
% &\leq C\norm{\bw}_{V^s}^{2\theta}\norm{\bw}_{V^{s+\alpha}}^{2(1-\theta)}\norm{\bu}_{V}\\ \notag
% &\quad+ C\norm{\bw}_{V^\alpha}^{}\norm{\bu}_{V^{s+1}}\norm{\bw}_{V^s}^{\theta}\norm{\bw}_{V^{s+\alpha}}^{1-\theta}\\ \notag
% &\leq \frac{\nu}{D}\norm{\bw}_{V^{s+\alpha}}^{2} +C\norm{\bu}_{V}^{\frac{1}{\theta}}\norm{\bw}_{V^s}^{2}\\ \notag
% &\quad+\frac{\nu}{D}\norm{\bw}_{V^{s+\alpha}}^{2} + C\norm{\bu}_{V^{s+1}}^2\norm{\bw}_{V^s}^{2\theta}\norm{\bw}_{V^{s+\alpha}}^{2(1-\theta)}\\ \notag
% &\leq \frac{\nu}{D}\norm{\bw}_{V^{s+\alpha}}^{2} +C\norm{\bu}_{V}^{\frac{1}{\theta}}\norm{\bw}_{V^s}^{2}\\ \notag
% &\quad+\frac{\nu}{D}\norm{\bw}_{V^{s+\alpha}}^{2} + C\norm{\bu}_{V^{s+1}}^{\frac{2}{\theta}}\norm{\bw}_{V^s}^{2} +\frac{\nu}{D}\norm{\bw}_{V^{s+\alpha}}^{2}\\ \notag
% &\leq \frac{3\nu}{D}\norm{\bw}_{V^{s+\alpha}}^2 + C\left( \norm{\bu}^{\frac{1}{\theta}}_{V} + \norm{\bu}_{V^{s+1}}^{\frac{2}{\theta}} \right)\norm{\bw}_{V^s}^2 
% \end{align}

For the second nonlinear term,
{\allowdisplaybreaks
\begin{align}
\abs{b(\bw,\bu,A^s\bw)} 
    &= 
        \abs{(\Lambda^s B(\bw,\bu),\Lambda^s\bw)}
    \\ \notag
    &\leq 
        \norm{\Lambda^sB(\bw,\bu)}_{L^{4/3}}
        \norm{\Lambda^s\bw}_{L^4}
    \\ \notag
    &\leq C
        \norm{\Lambda^s\bw}_{L^4}^2
        \norm{\nabla \bu}_{L^2}\\ \notag
     &\quad+C \notag
        \norm{\bw}_{L^4}
        \norm{\Lambda^s\nabla\bu}_{L^2}
        \norm{\Lambda^s\bw}_{L^4}
    \\ \notag
    &= C
            \norm{A^{s/2}\bw}_{L^4}^2
            \norm{\nabla \bu}_{L^2}\\ \notag
        &\quad+C \notag
            \norm{\bw}_{L^4}
            \norm{A^{s/2}\nabla\bu}_{L^2}
            \norm{A^{s/2}\bw}_{L^4}
    \\ \notag
    &\leq C
            \norm{\bw}_{V^s}^{2\theta}
            \norm{\bw}_{V^{s+\alpha}}^{2(1-\theta)}
            \norm{\bu}_{V}\\ \notag
        &\quad+C \notag
            \norm{\bw}_{V^\alpha}
            \norm{\bu}_{V^{s+1}}
            \norm{\bw}_{V^s}^{\theta}
            \norm{\bw}_{V^{s+\alpha}}^{1-\theta}
        \\ \notag
    &\leq 
            \frac{\nu}{20}\norm{\bw}_{V^{s+\alpha}}^{2}
        + C
            \norm{\bu}_{V}^{\frac{1}{\theta}}
            \norm{\bw}_{V^s}^{2}
        \\ \notag
        &\quad+\notag
            \frac{\nu}{20}\norm{\bw}_{V^{s+\alpha}}^{2}
        + C
            \norm{\bu}_{V^{s+1}}^2
            \norm{\bw}_{V^s}^{2\theta}
            \norm{\bw}_{V^{s+\alpha}}^{2(1-\theta)}
        \\ \notag
    &\leq 
            \frac{\nu}{20}\norm{\bw}_{V^{s+\alpha}}^{2}
        + C
            \norm{\bu}_{V}^{\frac{1}{\theta}}
            \norm{\bw}_{V^s}^{2}
        \\ \notag
        &\quad+ \notag
            \frac{\nu}{20}\norm{\bw}_{V^{s+\alpha}}^{2}
        + C
            \norm{\bu}_{V^{s+1}}^{\frac{2}{\theta}}
            \norm{\bw}_{V^s}^{2}
        + 
            \frac{\nu}{20}\norm{\bw}_{V^{s+\alpha}}^{2}
        \\ \notag
    &\leq 
            \frac{3\nu}{20}\norm{\bw}_{V^{s+\alpha}}^2
        + C
             \left( 
                    \norm{\bu}^{\frac{1}{\theta}}_{V} 
                + 
                    \norm{\bu}_{V^{s+1}}^{\frac{2}{\theta}} 
            \right)
            \norm{\bw}_{V^s}^2.
\end{align}
}
For the last nonlinear term,
{\allowdisplaybreaks
\begin{align}
    \abs{b(\bw,\bw,A^s\bw)} &= \abs{(\Lambda^sB(\bw,\bw),\Lambda^s\bw)}\\ \notag
    &\leq C\norm{\Lambda^sB(\bw,\bw)}_{L^{4/3}}\norm{\Lambda^s\bw}_{L^4}\\ \notag
    &\leq C
            \norm{\Lambda^s\bw}_{L^4}^2
            \norm{\nabla \bw}_{L^2}
        \\ \notag
    &\quad+ C\notag
            \norm{\bw}_{L^4}
            \norm{\Lambda^s\nabla \bw}_{L^2}
            \norm{\Lambda^s\bw}_{L^4}
        \\ \notag
    &= C
            \norm{A^{s/2}\bw}_{L^4}^2
            \norm{\nabla \bw}_{L^2}
        \\ \notag
    &\quad+ C\notag
            \norm{\bw}_{L^4}
            \norm{A^{s/2}\nabla \bw}_{L^2}
            \norm{A^{s/2}\bw}_{L^4}
        \\ \notag
    &\leq C
            \norm{\bw}_{V^s}^{2\theta}
            \norm{\bw}_{V^{s+\alpha}}^{2(1-\theta)}
            \norm{\bw}_{V}
        \\ \notag
        &\quad+C \notag
            \norm{\bw}_{V^\alpha}
            \norm{\bw}_{V^{s+1}}
            \norm{\bw}_{V^s}^{\theta}
            \norm{\bw}_{V^{s+\alpha}}^{1-\theta}
        \\ \notag
    &\leq
            \frac{\nu}{20}\norm{\bw}_{V^{s+\alpha}}^2
        +C_{\alpha,\nu}
            \norm{\bw}_{V}^{\frac{1}{\theta}}
            \norm{\bw}_{V^s}^{2}
        \\ \notag
        &\quad+ \notag
            \frac{\nu}{20}\norm{\bw}_{V^{s+\alpha}}^2
        +C_{\alpha,\nu}
            \norm{\bw}_{V^\alpha}^2
            \norm{\bw}_{V^s}^{2\theta}
            \norm{\bw}_{V^{s+\alpha}}^{2(1-\theta)}
        \\ \notag
    &\leq
            \frac{\nu}{20}\norm{\bw}_{V^{s+\alpha}}^2
        +C_{\alpha,\nu}
            \norm{\bw}_{V}^{\frac{1}{\theta}}
            \norm{\bw}_{V^s}^{2}
        \\ \notag
        &\quad+ \notag
            \frac{\nu}{20}\norm{\bw}_{V^{s+\alpha}}^2
        + C_{\alpha,\nu}
            \norm{\bw}_{V^\alpha}^{\frac{2}{\theta}}
            \norm{\bw}_{V^s}^{2}
        +
            \frac{\nu}{20}\norm{\bw}_{V^{s+\alpha}}^{2}
        \\ \notag
    &\leq 
        \frac{3\nu}{20}\norm{\bw}_{V^{s+\alpha}}^2 
    + C_{\alpha,\nu}
        \left( 
                \norm{\bw}^{\frac{1}{\theta}}_{V} 
            + 
                \norm{\bw}_{V^{\alpha}}^{\frac{2}{\theta}} 
        \right)
        \norm{\bw}_{V^s}^2.
\end{align}
}

Putting these estimates together we obtain:

\begin{align}
    \abs{b(\bu,\bw,A^s\bw)} + \abs{b(\bw,\bu,A^s\bw)} + \abs{b(\bw,\bw,A^s\bw)}
    \leq 
        \frac{10\nu}{20}\norm{\bw}_{V^{s+\alpha}}^2
    \\ \notag 
    +C 
        \left(
                \norm{\bu}_{V^s}^{\frac{2}{\theta}}
            +
                \norm{\bu}_{V^\alpha}^{\frac{2}{\theta}}
            +
                \norm{\bu}_{V^{1}}^{\frac{1}{\theta}}
            +
                \norm{\bu}_{V^{s+1}}^{\frac{2}{\theta}}
            +
                \norm{\bw}_{V}^{\frac{2}{\theta}}
            +
                \norm{\bw}_{V^{\alpha}}^{\frac{2}{\theta}}
        \right)
    \norm{\bw}_{V^{s}}^2.
    \notag 
\end{align}

{\allowdisplaybreaks
We now estimate the interpolant term as
\begin{align}
    -\mu\left\langle \bw, A^s\bw \right\rangle
    &=
    \mu
        \left\langle \bw - I_h(\bw), A^s \bw \right\rangle
    - \mu
        \left\langle \bw, A^s \bw \right\rangle
    \\ \notag
    &\leq
    \mu
        \abs{
            \left\langle A^{s/2}\left(\bw - I_h(\bw)\right), A^{s/2} \bw \right\rangle
        }
    -\mu
        \norm{\bw}_{V^s}^2
    \\ \notag
    &\leq
    \mu
        \norm{ A^{s/2}\left(\bw - I_h(\bw)\right)}_{L^2}
        \norm{A^{s/2} \bw}_{L^2}
    -\mu
        \norm{\bw}_{V^s}^2    
    \\ \notag
    &\leq
    \mu
        \norm{\bw - I_h(\bw)}_{V^s}
        \norm{\bw}_{V^s}
    -\mu
        \norm{\bw}_{V^s}^2  
    \\ \notag
    &\leq
    \frac{\mu}{2}
        \norm{\bw - I_h(\bw)}_{V^s}^2
    +\frac{\mu}{2}
        \norm{\bw}_{V^s}^2
    -\mu
        \norm{\bw}_{V^s}^2    
    \\ \notag
    &\leq
    \frac{\mu h^2 c_{0,\alpha}}{2}
        \norm{\bw}_{V^{s+\alpha}}^2
    -\frac{\mu}{2}
        \norm{\bw}_{V^s}
    \\ \notag
    &\leq
    \frac{\nu}{2}
        \norm{\bw}_{V^{s+\alpha}}^2
    -\frac{\mu}{2}
        \norm{\bw}_{V^s}.
\end{align}
}

Putting these estimates together
we obtain,

\begin{align}
\quad&\notag
        \dv{~}{t}\norm{\bw}^2_{V^s} 
    \\+&\notag
        \left(
                \mu
            -C 
            \left(
                    \norm{\bu}_{V^s}^{\frac{2}{\theta}}
                +
                    \norm{\bu}_{V^\alpha}^{\frac{2}{\theta}}
                +
                    \norm{\bu}_{V^{1}}^{\frac{1}{\theta}}
                +
                    \norm{\bu}_{V^{s+1}}^{\frac{2}{\theta}}
                +
                    \norm{\bw}_{V}^{\frac{2}{\theta}}
                +
                    \norm{\bw}_{V^{\alpha}}^{\frac{2}{\theta}}
            \right)    
        \right)
        \norm{\bw}_{V^s}^2 
    \leq 0.
\end{align}
Thus, for $ \mu
    \geq C_{\alpha,\nu}\left(4\sigma_\alpha^{2/\theta} + \sigma_{1}^{1/\theta} + 2\sigma_{1}^{2/\theta} + \sigma_{\alpha+1}\right),  $
we have exponential convergence of $\bv$ to $\bu$ in $V^s$. We know from before that $\bu,\bv \in L^\infty(0,T;V^{2\alpha})$, and so \newline $\bw\in L^\infty(0,T;V^{2\alpha})$. Thus, setting $s=\alpha$ we obtain the result. 
\end{proof}

\section{Conclusion}
\label{conclusion}

% \todo[inline]{This needs to be written.}
\noindent
In this study, we applied the AOT algorithm to the Navier--Stokes equations with fractional diffusion.
We demonstrated the global \hyphenation{well-posedness} for the assimilation equations and proved that their solutions converge to the reference solution in the  $H$ and $V^\alpha$ norms exponentially fast in time, and for spatial dimensions $d$, $2\leq d\leq 8$.  
We note that in the 3D case, these results apply with diffusion exponent of at least $5/4$, which is known as the Lions exponent, for which global well-posedness of the 3D fractional Navier--Stokes system was shown in \cite{Lions_1959}. 

% It is important to clarify that this study does not attempt to address the longstanding and complex question of the global well-posedness of the 3D NSE without fractional diffusion, a challenging issue that continues to be the focus of intensive research worldwide.

The sequel to this study \cite{Carlson_Larios_Victor_2023_2D_fractional_AOT} will be focused on lower-order diffusion (i.e., ``hypo-diffusion'') in the 2D case.
More specifically, we are interested in studying the behavior of the system as we let the parameter $\alpha$ approach $0$. 
This case is of particular interest as the 2D scenario is known to be globally well-posed for any choice of $\alpha\geq0$ (and also $\nu=0$) with sufficiently regular initial data. 

\appendix
\section{Uniform Bounds on Solutions}
\begin{theorem}\label{thm - unif bounds}
Let $\mu\geq0$, $\nu>0$, $T>0$, $2\leq d\leq 8$, and $\alpha\geq \frac{d}{4}+\frac12$.  Given $\bbf\in V^{\alpha}$, $\bu_0\in V^\alpha$, and $\bv_0\in V^\alpha$, let $\bu$ and $\bv$, be classical solutions of \cref{eq: NSE functional,eq: NSE AOT functional}, respectively, on $[0,T]$ such that $\norm{\bv_0}_{v^\alpha}\leq K$.  Then there exists a $\sigma_s>0$ such that \begin{align}
        \norm{\bv(t)}_{V^s} \leq \sigma_s
    \end{align}
    for $s = 0,1,\alpha$ and for all times $t\in [0,T]$. Moreover, $\sigma_0$ does not depend on time.
\end{theorem}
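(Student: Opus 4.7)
The plan is to perform energy estimates in a hierarchy, moving from $L^2$ up through $V^1$ to $V^\alpha$, using each lower-order bound to close the estimate at the next level. Throughout, the strategy mirrors the energy estimates already carried out in the proof of \Cref{thm - classical Existence}, but with the added bookkeeping needed to guarantee that $\sigma_0$ is genuinely time-independent while $\sigma_1,\sigma_\alpha$ need only hold on $[0,T]$. A preliminary step is to record that the reference solution $\bu$ itself obeys a uniform-in-time $L^2$ bound: pairing \cref{eq: NSE functional} with $\bu$ and applying the Poincar\'e inequality yields $\frac{d}{dt}\norm{\bu}_{L^2}^2 + \nu\lambda_1^\alpha\norm{\bu}_{L^2}^2 \leq C_\nu\norm{\bbf}_{L^2}^2$, which by Gr\"onwall gives a bound $\norm{\bu(t)}_{L^2}\leq C(\norm{\bu_0}_{L^2},\norm{\bbf}_{L^2},\nu)$ independent of time.

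For $s=0$, I would test \cref{eq: NSE AOT functional} against $\bv$. The nonlinear term vanishes by \cref{symm2}, and the feedback term is split as
\begin{align*}
-\mu\langle I_h(\bv),\bv\rangle
= \mu\langle \bv-I_h(\bv),\bv\rangle - \mu\norm{\bv}_{L^2}^2,
\end{align*}
so that the first piece is absorbed using \cref{eq: interp} and the hypothesis $\mu h^2 c_{0,\alpha}\leq \nu$ (inherited from the convergence theorems), yielding a $-\tfrac{\mu}{2}\norm{\bv}_{L^2}^2$ drain. The cross term $\mu(I_h(\bu),\bv)$ is handled by Cauchy--Schwarz/Young using the uniform bound on $\norm{\bu}_{L^2}$. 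The resulting differential inequality is of the form $\tfrac{d}{dt}\norm{\bv}_{L^2}^2 + c\norm{\bv}_{L^2}^2 \leq C$ for time-independent $c,C>0$, and Gr\"onwall produces the desired time-independent $\sigma_0$.

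For $s=1$, test against $A\bv$, which produces $\tfrac{1}{2}\tfrac{d}{dt}\norm{\bv}_V^2 + \nu\norm{\bv}_{V^{\alpha+1}}^2$ on the left. The linear forcing and feedback terms are controlled exactly as in \Cref{thm - classical Existence}, using \cref{eq: interp} and the just-obtained $L^2$ bound on $\bv$ (and the uniform $L^2$ bound on $\bu$, hence on $I_h(\bu)$). The nonlinear term $b(\bv,\bv,A\bv)$ is controlled by the interpolation bound \cref{vvv_bnd} when $\alpha<\tfrac{d}{2}+1$ and by the Agmon-based bound in the complementary case, both of which produce an expression absorbable into $\nu\norm{\bv}_{V^{\alpha+1}}^2$ plus a factor times $\norm{\bv}_V^2\norm{\bv}_{V^\alpha}^{2a_2/a_1}$. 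Since $\bv\in L^2(0,T;V^\alpha)$ by the weak energy bound, Gr\"onwall's inequality gives a $T$-dependent bound $\sigma_1$.

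For $s=\alpha$, test against $A^\alpha\bv$. The main obstacle, and the technical heart of the proof, is the nonlinear term $\langle B(\bv,\bv),A^\alpha\bv\rangle$, which I would rewrite as $\langle \Lambda^\alpha B(\bv,\bv),\Lambda^\alpha\bv\rangle$ and estimate via the Kato--Ponce inequality \cref{ineq:Kato-Ponce} together with the Gagliardo--Nirenberg interpolation \cref{Ladyzhenskaya}, exactly as in the $V^\alpha$ estimate carried out in the proof of \Cref{thm - classical Existence}. This is where the restriction $d\leq 8$ appears, through the Sobolev embedding $V^{d/8}\hookrightarrow L^{8/3}$ (see \Cref{rmk_dimension_restriction}). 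All remaining linear pieces are handled using \cref{eq: interp} and the uniform $L^2$ bound on $\bu$ provided at the outset. After absorbing high-order terms into $\nu\norm{\bv}_{V^{2\alpha}}^2$ and applying Gr\"onwall using the $V^1$ bound from the previous step, one obtains a $T$-dependent bound $\sigma_\alpha$. As in \Cref{thm - classical Existence}, these formal manipulations are justified rigorously via Galerkin truncation, with the bounds passing through in the usual limiting procedure.
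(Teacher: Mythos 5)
Your proposal is correct and follows essentially the same route as the paper's appendix proof: a hierarchy of energy estimates at the $L^2$, $V$, and $V^\alpha$ levels, with the feedback term split and absorbed via \cref{eq: interp} under $\mu h^2 c_{0,\alpha}\leq\nu$, the two-case (interpolation vs.\ Agmon) treatment of $b(\bv,\bv,A\bv)$, and Kato--Ponce at the top level, closing each stage with Gr\"onwall. The only differences are cosmetic — you make the uniform $L^2$ bound on $\bu$ explicit where the paper simply invokes classicality, and you close the $V^\alpha$ Gr\"onwall via the $V$ bound as in \Cref{thm - classical Existence} rather than via the $L^2(0,T;V^{\alpha+1})$ integrability used in the appendix — and both variants work.
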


\begin{proof}
To prove this, we must first establish a uniform bound in $V$. To do this, we take the inner product of \cref{eq: NSE AOT functional} with $\bv$, yielding the following equations:

\begin{align*}
\frac{1}{2}\dv{~}{t}\norm{\bv}^2_{L^2} + \nu \norm{\bv}_{V^\alpha}^2 + b(\bv,\bv,\bv) = (\vect{f}, \bv) + \mu (I_h(\bu),\bv) - \mu (I_h(\bv),\bv).    
\end{align*}
Let us now denote $\vect{g}:= \vect{f} + \mu I_h(\bu)$, and note that due to the antisymmetric property of the trilinear form $b$, the equations simplify to the following:
\begin{align}
\frac{1}{2}\dv{~}{t}\norm{\bv}^2_{L^2} + \nu \norm{\bv}_{V^\alpha}^2 = (\vect{g}, \bv)- \mu (I_h(\bv),\bv).    
\end{align}
We note that, as $\bu$ is a classical solution, and due to the requirements on $I_h$ we have that $\norm{\vect{g}}_{L^\infty(0,T;L^2)} \leq M$ for some constant $M>0$.
We now estimate each of these terms as follows:
\begin{align}
    (\vect{g}, \bv) &\leq \frac{\norm{\vect{g}}^2_{L^2}}{\mu} + \frac{\mu}{2}\norm{\bv}^2_{L^2}
\end{align}
\begin{align}
    - \mu (I_h(\bv),\bv)  &= \mu (\bv - I_h(\bv), \bv) - \mu(\bv,\bv)\\
                            &\leq \mu \norm{\bv - I_h(\bv)}_{L^2}\norm{\bv}_{L^2} - \mu \norm{\bv}^2_{L^2}\notag \\
                            &\leq \frac{\mu}{2} \norm{\bv - I_h(\bv)}_{L^2}^2 + \frac{\mu}{2}\norm{\bv}^2_{L^2} - \mu \norm{\bv}^2_{L^2}\notag \\
                            &\leq \frac{\mu c_{0,\alpha} h^2}{2} \norm{A^{\alpha/2}\bv}_{L^2}^2  - \frac{\mu}{2}\norm{\bv}^2_{L^2}\notag \\\notag
                            &\leq \frac{\nu}{2} \norm{\bv}_{V^\alpha}^2  - \frac{\mu}{2}\norm{\bv}^2_{L^2}.
\end{align}

Putting these estimates together, we obtain
\begin{align}
    \dv{}{t}\norm{\bv}_{L^2}^2 + \nu \norm{\bv}_{V^\alpha}^2 &\leq \frac{\norm{\vect{g}}_{L^2}^2}{\mu}
    \leq \frac{M}{\mu}.
\end{align}
Using Poincar\'e's inequality, it follows that
\begin{align}
    \dv{~}{t}\norm{\bv}_{L^2}^2 + \nu \lambda_1^\alpha \norm{\bv}_{L^2}^2 &\leq \frac{M}{\mu}.
\end{align}

Now, using Gr\"onwall's inequality we obtain
\begin{align}
    \norm{\bv(t)}_{L^2}^2\leq \norm{\bv_0}^2_{L^2}e^{-\nu \lambda_1^\alpha t} + \frac{M}{\mu\nu\lambda_1^\alpha}(1-e^{-\nu \lambda_1^\alpha t})\leq K + \frac{M}{\mu\nu\lambda_1}:= \sigma_0.
\end{align}

We note that one could pick $t_0(\norm{\bv_0}_{L^2}) = \max( -\frac{1}{\nu\lambda_1^\alpha}\ln(\frac{M}{\mu\nu\lambda_1^\alpha\norm{\bv_0}_V^2}) ,0)$,
which will yield that for all $t\geq t_0$
\begin{align}
    \norm{\bv(t)}_{L^2}^2 \leq \frac{2M}{\mu\nu\lambda_1^\alpha}.
\end{align}
Thus we have proved the existence of an absorbing ball in $H$.
Additionally, we also have that
\begin{align}\label{est: int V alpha}
    \int_{0}^T\norm{\bv}^2_{V}ds &\leq T\frac{\sigma_0^2}{\nu} + \frac{K}{\nu}:=\rho_1.
    % \frac{\norm{\bv(0)}^2_H}{\nu} + \frac{MT}{\mu\nu^2\lambda_1^\alpha}\\
    % &\leq \rho_1.
\end{align}

We now move to prove the existence of an absorbing ball in $V$. To do this, we instead take  the inner product of \cref{eq: NSE AOT functional} with $A\bv$. This yields the following equations:
\begin{align}
\frac{1}{2}\dv{~}{t}\norm{\bv}_{V}^2 + \nu\norm{\bv}^2_{V^{\alpha+1}}&= -b(\bv,\bv,A\bv) + \left\langle\vect{g}, A\bv\right\rangle 
- \mu \left\langle I_h(\bv), A \bv \right\rangle.
\end{align}

We will now utilize two different estimates for the nonlinear term. For $\frac{d}{4}+\frac{1}{2}\leq \alpha < \frac{d}{2}+1$, integrating by parts and using \eqref{symm2}, we obtain
\begin{align}
b(\bv,\bv,A\bv) &= \sum_{i,j,k=1}^{d}\int_\Omega \pdv{v_i}{x_k}\pdv{v_j}{x_i} \pdv{v_j}{x_k}  \,d\bx
\leq C\norm{\nabla \bv}_{L^3}^3\\ \notag
     &\leq C\norm{\nabla \bv}^{3b_1}_{L^2}\norm{A^{\frac{\alpha}{2}}\bv}^{3b_2}_{L^2}\norm{A^{\frac{\alpha + 1}{2}}\bv}^{3b_3}_{L^2}\\ \notag
     &\leq \frac{\nu}{4}\norm{\bv}^{2}_{V^{\alpha+1}} + C\norm{ \bv}^{2}_{V}\norm{\bv}^{2\frac{b_2}{b_1}}_{V^\alpha}.
\end{align}
Here $(b_1,b_2,b_3) = (1 - \gamma, \frac{1}{3}, \gamma - \frac{1}{3})$, where $\gamma = \frac{1}{3\alpha}(1 + \frac{d}{2})$. 

For larger values of $\alpha$, we will instead utilize Agmon's inequality to estimate the nonlinear term as follows
\begin{align}
    \abs{b(\bv,\bv,A \bv)}& \leq C\norm{\bv}_{L^\infty}\norm{\bv}_{V} \norm{\bv}_{V^2}\\ \notag
    &\leq C \norm{\bv}_{V^{\frac{d}{2} - 1}}^{1/2}\norm{\bv}_{V^{\frac{d}{2}+1}}^{1/2}\norm{\bv}_{V}\norm{\bv}_{V^2}\\ \notag
    &\leq  \Calpha  \norm{\bv}_{V^\alpha}\norm{\bv}_{V}\norm{\bv}_{V^{\alpha+1}}\\ \notag
   &\leq   C_{\alpha,\nu} \norm{\bv}_{V^\alpha}^2\norm{\bv}_{V}^2 + \frac{\nu}{4}\norm{\bv}_{V^{\alpha + 1}}^2.
\end{align}

We estimate the forcing term as follows
\begin{align}
    (\vect{g},A\bv) &\leq \frac{\norm{\vect{g}}_{V}^2}{\mu} + \frac{\mu}{2}\norm{\bv}_{V}^2.
\end{align}
We now estimate the remaining term as
\begin{align}
    -\mu(I_h(\bv),A\bv) &= \mu(\bv - I_h(\bv), A^s\bv) - \mu \norm{\bv}_{V}^2\\\notag&= \mu(A^\frac{1}{2}\left(\bv - I_h(\bv)\right), A^\frac{1}{2}\bv) - \mu \norm{\bv}_{V}^2\\\notag&\leq \mu\norm{\bv - I_h(\bv)}_{V}\norm{\bv}_{V} - \mu \norm{\bv}_{V}^2\\\notag&\leq \frac{\mu c_{0,\alpha}h^2}{4}\norm{\bv - I_h(\bv)}_{V}^2 + \frac{\mu}{2}\norm{\bv}^2_{V} - \mu \norm{\bv}_{V}^2\\\notag&\leq \frac{\mu c_{0,\alpha}h^2C_{\alpha}}{2}\norm{\bv}_{V}^2 + \frac{\mu}{2}\norm{\bv}^2_{V} - \mu \norm{\bv}_{V}^2\\\notag&\leq \frac{\nu}{4}\norm{\bv}_{V^{\alpha+1}}^2 - \frac{\mu}{2} \norm{\bv}_{V}^2.
\end{align}

Putting these estimates together we obtain
\begin{align}\label{ineq: v ball}
    \dv{}{t}\norm{\bv}_{V}^2 + \nu\norm{\bv}_{V^{\alpha+1}}^2 \leq \frac{\norm{\vect{g}}_{V}^2}{\mu} + C\norm{\bv}_{V^\alpha}^{2a}\norm{\bv}_{V}^2,
\end{align}
where 
\begin{align}b := \begin{cases}
    \frac{b_1}{b_2}, & \frac{d}{4}+\frac{1}{2}\leq \alpha < \frac{d}{2}+1,\\\notag1, & \alpha \geq \frac{d}{2}+1.
\end{cases}.
\end{align}
Note that $\norm{\vect{g}}_{V}^2 \leq M$, for $\vect{f}\in V$. Using Poincar\'e's inequality we obtain
\begin{align}
    \dv{}{t}\norm{\bv}_{V}^2 + \nu\lambda_1^\alpha\norm{\bv}_{V}^2 \leq \frac{M}{\mu} + C\norm{\bv}_{V^\alpha}^{2b}\norm{\bv}_{V}^2.
\end{align}
Hence, from Gr\"onwall's inequality, it follows that
\begin{align}
    \norm{\bv(t)}_{V}^2\leq \underbrace{\norm{\bv(t_0)}_{V}^2 e^{\left(\left(\rho_1\right)^b-\nu\lambda_1^\alpha\right)t}+ \frac{M}{\nu}\frac{1}{\left(\rho_1\right)^b - \nu\lambda_1^\alpha}\left(1 - e^{\left(\left(\rho_1\right)^b - \nu\lambda_1^\alpha\right)t}\right)}_{:=\sigma_1^2} .
\end{align}

We note that here we are using H\"older's inequality to obtain
% \begin{align}
%     \int_{t_0}^t\norm{\bv}_{V^\alpha}^{2a} \leq \left(\int_{t_0}^t 1^{\frac{1}{1-a}}dt\right)^{1-a}\left(\int_{t_0}^t(\norm{\bv}_{V^\alpha}^2)^{\frac{a}{a}}dt\right)^b \leq t\rho_1^b.
% \end{align}
\begin{align}
    \int_{0}^t\norm{\bv}_{V^\alpha}^{2b} \leq (t-0)^{1-b}\left(\int_{0}^t\norm{\bv}_{V^\alpha}^2dt\right)^b \leq t\left(\rho_1\right)^b.
\end{align}
This is permissible in the regime of $\frac{d}{4} + \frac{1}{2}\leq \alpha < \frac{d}{2} + 1$, since $\frac1b> 1$. Outside this regime, the use of H\"older's inequality is unnecessary, as we can use estimate \cref{est: int V alpha} directly.

Similarly to before, we have that 
\begin{align}\label{ineq: V alpha + 1 ball}
    \int_{0}^T\norm{\bv}_{V^{\alpha+1}}^2 \leq \rho_{\alpha + 1},
\end{align}
where the explicit value of $\rho_{\alpha +1}$ can be obtained by integrating \cref{ineq: v ball} and using the previous estimates on $\norm{\bv(t)}^2_{V}$.

We note here that in 2D periodic setting the nonlinear term vanishes, which allows one to achieve better estimates and obtain an absorbing ball in $V$ with radius defined in terms of the Grashof number. While in $H$ it is straightforward to show the absorbing ball, we require only a uniform bound on $\norm{\bv(t)}^2_{V}$. 
Thus, there is a dependence on $\norm{\bv_0}_{V}$ in $\sigma_1$ and $\rho_\alpha$.

Finally, to achieve a uniform bound on $\norm{\bv}_{V^\alpha}$ we take the inner product of \cref{eq: NSE AOT functional} with $A^\alpha \bv$. This yields the following set of equations:
\begin{align}
\frac{1}{2}\dv{}{t}\norm{\bv}^2_{V^\alpha} + \nu\norm{\bv}^2_{V^{2\alpha}} = (\vect{g},A^{\alpha}\bv) - b(\bv,\bv, A^\alpha\bv) - \mu(I_h(\bv),\bv).
\end{align}
Similarly to before we estimate each of the terms on the right hand side,
\begin{align}
    \left(\vect{g}, A^\alpha \bv\right) &\leq \frac{\norm{\vect{g}}^2_{V^\alpha}}{\mu} + \frac{\mu}{2}\norm{\bv}_{V^\alpha}^2,
\end{align}
\begin{align}
    -\mu (I_h(\bv), A^\alpha \bv) &\leq \frac{\nu}{2}\norm{\bv}^2_{V^{2\alpha}} - \frac{\mu}{2}\norm{\bv}_{V^\alpha}^2,
\end{align}
and 
\begin{align}
    \abs{b(\bv,\bv,A^\alpha \bv)} & = \abs{(B(\bv,\bv), A^\alpha \bv)}\\\notag&=  \abs{(\Lambda^{\alpha}B(\bv,\bv), \Lambda^{\alpha} \bv)}\\\notag&\leq \norm{\Lambda^{\alpha}B(\bv,\bv)}_{L^{4/3}}\norm{ \Lambda^{\alpha} \bv}_{L^4}\\\notag&\leq C
        \norm{A^{\alpha/2}\bv}_{L^4}^2
        \norm{\bv}_{V} 
    + C
        \norm{\bv}_{L^4}
        \norm{A^{\alpha/2}\grad\bv}_{L^2}
        \norm{A^{\alpha/2}\bv}^2_{L^4}\\\notag&\leq C
        \norm{\bv}_{V^\alpha}^{2\theta}
        \norm{\bv}_{V^{2\alpha}}^{2(1-\theta)}
        \norm{\bv}_{V}
    + C
        \norm{\bv}_{V^\alpha}
        \norm{\bv}_{V^{\alpha+1}}
        \norm{\bv}_{V^{2\alpha}}\\\notag&\leq \frac{\nu}{2}    
        \norm{\bv}_{V^{2\alpha}}^{2}
    + C_{\alpha,\nu}
        \norm{\bv}_{V^\alpha}^{2}
        \norm{\bv}_{V}^{\frac{1}{\theta}}
    + C
        \norm{\bv}_{V^\alpha}^2
        \norm{\bv}_{V^{\alpha+1}}^2.\notag% &\leq \frac{\nu}{4}    
    %     \norm{\bv}_{V^{2\alpha}}^{2}
    % + C_{\alpha,\nu}
    %     \norm{\bv}_{V^\alpha}^{2}
    %     \sigma_1^{\frac{1}{\theta}}
    % + C
    %     \norm{\bv}_{V^\alpha}^2
    %     \norm{\bv}_{V^{\alpha+1}}^2.
\end{align}
Here $\theta = 1 - \frac{d}{4\alpha}$.
Note that above we have utilized the Kato-Ponce inequality, \cref{ineq:Kato-Ponce} in our estimate of the nonlinear term.

Finally, putting all of this together, we obtain the following inequality
\begin{align}
    \dv{}{t}\norm{\bv}_{V^{\alpha}}^2 & \leq \frac{\norm{\vect{g}}_{V^\alpha}^2}{\mu}     + C_{\alpha,\nu}\norm{\bv}_{V}^{\frac{1}{\theta}}\norm{\bv}_{V^\alpha}^{2}+ C\norm{\bv}_{V^\alpha}^2\norm{\bv}_{V^{\alpha+1}}^2\\\notag&\leq \frac{M}{\mu}     + C_{\alpha,\nu}\sigma_1^{\frac{1}{\theta}}\norm{\bv}_{V^\alpha}^{2}+ C\norm{\bv}_{V^\alpha}^2\norm{\bv}_{V^{\alpha+1}}^2.
\end{align}
Utilizing  Gr\"onwall 's inequality and \cref{ineq: V alpha + 1 ball} we obtain a bound $\norm{\bv}_{L^\infty(0,T;V^{\alpha})}\leq \sigma_{\alpha}$.

We note that this bound holds for any choice of initial data $\bv_0 \in V^\alpha$ with $\norm{\bv_0}_{V^\alpha}^2 \leq K$, as there is a dependence of $\sigma_\alpha$ on $K$.
% \qedhere 
\end{proof}

% We remark that in the bounds achieved in the above proof, the value of $\mu$ is only present in the resultant bounds as a fraction $\frac{M}{\mu}$, where $M$ is given such that $\norm{\bbf}_{V^\alpha}^2 + \mu\norm{ I_h(\bu)}_{V^\alpha}^2 \leq M$. Thus, we can notice that $\sigma_\alpha$ largely does not depend on $\mu$ and it's value does not go to infinity as $\mu$ gets large. 
% This is useful as in our convergence proofs we require large values of $\mu$ given in terms of the bounding set $\sigma_\alpha$.

We note that the above proof can be readily adapted to provide a similar estimate for $\bu$, with $\norm{\bu}^2_{V^\alpha} \leq \tilde\sigma_\alpha$. In this case the resultant $\tilde\sigma_\alpha$ has no dependence on $\mu$ whatsoever. We  take $\sigma_\alpha := \max(\sigma_\alpha, \tilde\sigma_\alpha)$ to achieve a uniform bound on both $\bu$ and $\bv$. Alternatively, we note that the uniform bounds on $\bu$ follow trivially from the regularity results in \cref{thm - classical Existence of u}. The uniform bounds on $\bv$ also follow directly from this theorem, but the estimates above have a reduced dependence on $\mu$.

\section*{Acknowledgments}
\noindent
Author A.L. would like to thank the Isaac Newton Institute for Mathematical Sciences, Cambridge, for support and warm hospitality during the programme ``Mathematical aspects of turbulence: where do we stand?'' where work on this manuscript was undertaken. This work was supported by EPSRC grant no. EP/R014604/1. 
The research of A.L. was supported in part by NSF grants DMS-2206762 and CMMI-1953346, and USGS grant G23AS00157 number GRANT13798170.  
The research of C.V. was supported in part by the NSF GRFP grant DMS-1610400.

\begin{small}
%    \bibliographystyle{abbrv}
%    \bibliography{Victor_biblio,LariosBiblio}

\begin{thebibliography}{99}

\bibitem{Albanez_Nussenzveig_Lopes_Titi_2016}
D.~A. Albanez, H.~J. Nussenzveig~Lopes, and E.~S. Titi.
\newblock Continuous data assimilation for the three-dimensional
  {N}avier--{S}tokes-$\alpha$ model.
\newblock {\em Asymptotic Anal.}, 97(1-2):139--164, 2016.

\bibitem{Anthes_1974_JAS}
R.~A. Anthes.
\newblock Data assimilation and initialization of hurricane prediction models.
\newblock {\em J. Atmos. Sci.}, 31(3):702--719, 1974.

\bibitem{asch2016DA}
M.~Asch, M.~Bocquet, and M.~Nodet.
\newblock {\em Data {A}ssimilation: {M}ethods, {A}lgorithms, and
  {A}pplications}.
\newblock Fundamentals of Algorithms. SIAM, 2016.

\bibitem{Azouani_Olson_Titi_2014}
A.~Azouani, E.~Olson, and E.~S. Titi.
\newblock Continuous data assimilation using general interpolant observables.
\newblock {\em J. Nonlinear Sci.}, 24(2):277--304, 2014.

\bibitem{Azouani_Titi_2014}
A.~Azouani and E.~S. Titi.
\newblock Feedback control of nonlinear dissipative systems by finite
  determining parameters---a reaction-diffusion paradigm.
\newblock {\em Evol. Equ. Control Theory}, 3(4):579--594, 2014.

\bibitem{bae2015gevrey}
H.~Bae and A.~Biswas.
\newblock Gevrey regularity for a class of dissipative equations with analytic
  nonlinearity.
\newblock {\em Methods Appl. Anal.}, 22(4):377--408, 2015.

\bibitem{Bessaih_Olson_Titi_2015}
H.~Bessaih, E.~Olson, and E.~S. Titi.
\newblock Continuous data assimilation with stochastically noisy data.
\newblock {\em Nonlinearity}, 28(3):729--753, 2015.

\bibitem{Biswas_Bradshaw_Jolly_2022}
A.~Biswas, Z.~Bradshaw, and M.~Jolly.
\newblock Convergence of a mobile data assimilation scheme for the 2{D}
  {N}avier--{S}tokes equations.
\newblock {\em ar{X}iv e-prints}, 2022.

\bibitem{Biswas_Bradshaw_Jolly_2020}
A.~Biswas, Z.~Bradshaw, and M.~S. Jolly.
\newblock Data assimilation for the {N}avier--{S}tokes equations using local
  observables.
\newblock {\em SIAM J. Appl. Dyn. Syst.}, 20(4):2174--2203, 2021.

\bibitem{Buckmaster_Vicol_2019_nonuniqueness}
T.~Buckmaster and V.~Vicol.
\newblock Nonuniqueness of weak solutions to the {N}avier--{S}tokes equation.
\newblock {\em Annals of Mathematics}, 189(1):101--144, 2019.

\bibitem{Carlson_Hudson_Larios_2020}
E.~Carlson, J.~Hudson, and A.~Larios.
\newblock Parameter recovery for the 2 dimensional {N}avier--{S}tokes equations
  via continuous data assimilation.
\newblock {\em SIAM J. Sci. Comput.}, 42(1):A250--A270, 2020.

\bibitem{Carlson_Larios_2020_nonlinConv}
E.~Carlson, A.~Larios, and E.~S. Titi.
\newblock Super-exponential convergence rate of a nonlinear continuous data
  assimilation algorithm: The 2{D} {N}avier-{S}tokes equations paradigm.
\newblock 2023.
\newblock (submitted) ar{X}iv:2304.01128.

\bibitem{Carlson_Larios_Victor_2023_2D_fractional_AOT}
E.~Carlson, A.~Larios, and C.~Victor.
\newblock Continuous data assimilation for the 2{D} {N}avier-{S}tokes equations
  with lower-order fractional diffusion: Analysis and computations.
\newblock 2023.
\newblock (in progress).

\bibitem{Carlson_VanRoekel_Petersen_Godinez_Larios_2021}
E.~Carlson, L.~Van~Roekel, M.~Petersen, H.~C. Godinez, and A.~Larios.
\newblock {CDA} algorithm implemented in {MPAS-O} to improve eddy effects in a
  mesoscale simulation.
\newblock 2021.
\newblock (submitted).

\bibitem{Celik_Olson_2022}
E.~Celik and E.~Olson.
\newblock Data assimilation using time-delay nudging in the presence of
  {G}aussian noise.
\newblock {\em ar{X}iv e-prints}, 2022.

\bibitem{Celik_Olson_Titi_2019}
E.~Celik, E.~Olson, and E.~S. Titi.
\newblock Spectral filtering of interpolant observables for a discrete-in-time
  downscaling data assimilation algorithm.
\newblock {\em SIAM J. Appl. Dyn. Syst.}, 18(2):1118--1142, 2019.

\bibitem{Constantin_Foias_1988}
P.~Constantin and C.~Foias.
\newblock {\em {N}avier--{S}tokes {E}quations}.
\newblock Chicago Lectures in Mathematics. University of Chicago Press,
  Chicago, IL, 1988.

\bibitem{Diegel_Rebholz_2021}
A.~E. Diegel and L.~G. Rebholz.
\newblock Continuous data assimilation and long-time accuracy in a {$C^0$}
  interior penalty method for the {C}ahn-{H}illiard equation.
\newblock {\em Appl. Math. Comput.}, 424:127042, 2022.

\bibitem{Farhat_GlattHoltz_Martinez_McQuarrie_Whitehead_2019}
A.~Farhat, N.~E. Glatt-Holtz, V.~R. Martinez, S.~A. McQuarrie, and J.~P.
  Whitehead.
\newblock Data assimilation in large {P}randtl {R}ayleigh--{B}\'{e}nard
  convection from thermal measurements.
\newblock {\em SIAM J. Appl. Dyn. Syst.}, 19(1):510--540, 2020.

\bibitem{Farhat_Jolly_Titi_2015}
A.~Farhat, M.~S. Jolly, and E.~S. Titi.
\newblock Continuous data assimilation for the 2{D} {B}\'enard convection
  through velocity measurements alone.
\newblock {\em Phys. D}, 303:59--66, 2015.

\bibitem{farhat2023identifying}
A.~Farhat, A.~Larios, V.~R. Martinez, and J.~P. Whitehead.
\newblock Identifying the body force from partial observations of a 2{D}
  incompressible velocity field.
\newblock 2023.
\newblock (submitted) arXiv:2302.04701.

\bibitem{Farhat_Lunasin_Titi_2016abridged}
A.~Farhat, E.~Lunasin, and E.~S. Titi.
\newblock Abridged continuous data assimilation for the 2{D} {N}avier--{S}tokes
  equations utilizing measurements of only one component of the velocity field.
\newblock {\em J. Math. Fluid Mech.}, 18(1):1--23, 2016.

\bibitem{Farhat_Lunasin_Titi_2016benard}
A.~Farhat, E.~Lunasin, and E.~S. Titi.
\newblock Data assimilation algorithm for 3{D} {B\'e}nard convection in porous
  media employing only temperature measurements.
\newblock {\em J. Math. Anal. Appl.}, 438(1):492--506, 2016.

\bibitem{Farhat_Lunasin_Titi_2016_Charney}
A.~Farhat, E.~Lunasin, and E.~S. Titi.
\newblock On the {C}harney conjecture of data assimilation employing
  temperature measurements alone: the paradigm of 3{D} planetary geostrophic
  model.
\newblock {\em Mathematics of Climate and Weather Forecasting}, 2(1), 2016.

\bibitem{Farhat_Lunasin_Titi_2017_Horizontal}
A.~Farhat, E.~Lunasin, and E.~S. Titi.
\newblock Continuous data assimilation for a {2D} {B}\'enard convection system
  through horizontal velocity measurements alone.
\newblock {\em J. Nonlinear Sci.}, pages 1--23, 2017.

\bibitem{Foias_Manley_Rosa_Temam_2001}
C.~Foias, O.~Manley, R.~Rosa, and R.~Temam.
\newblock {\em {N}avier--{S}tokes {E}quations and {T}urbulence}, volume~83 of
  {\em Encyclopedia of Mathematics and its Applications}.
\newblock Cambridge University Press, Cambridge, 2001.

\bibitem{Foias_Mondaini_Titi_2016}
C.~Foias, C.~F. Mondaini, and E.~S. Titi.
\newblock A discrete data assimilation scheme for the solutions of the
  two-dimensional {N}avier--{S}tokes equations and their statistics.
\newblock {\em SIAM J. Appl. Dyn. Syst.}, 15(4):2109--2142, 2016.

\bibitem{Franz_Larios_Victor_2022}
T.~Franz, A.~Larios, and C.~Victor.
\newblock The {B}leeps, the {S}weeps, and the {C}reeps: {C}onvergence rates for
  observer patterns via data assimilation for the 2{D} {N}avier--{S}tokes
  equations.
\newblock {\em Comput. Methods Appl. Mech. Engrg.}, 392:114673, 2022.

\bibitem{Gardner_Larios_Rebholz_Vargun_Zerfas_2020_VVDA}
M.~Gardner, A.~Larios, L.~G. Rebholz, D.~Vargun, and C.~Zerfas.
\newblock Continuous data assimilation applied to a velocity-vorticity
  formulation of the 2{D} {N}avier--{S}tokes equations.
\newblock {\em Electron. Res. Arch.}, 29(3):2223--2247, 2021.

\bibitem{Gesho_Olson_Titi_2015}
M.~Gesho, E.~Olson, and E.~S. Titi.
\newblock A computational study of a data assimilation algorithm for the
  two-dimensional {N}avier--{S}tokes equations.
\newblock {\em Commun. Comput. Phys.}, 19(4):1094--1110, 2016.

\bibitem{Hayden_Olson_Titi_2011}
K.~Hayden, E.~Olson, and E.~S. Titi.
\newblock Discrete data assimilation in the {L}orenz and 2{D}
  {N}avier--{S}tokes equations.
\newblock {\em Phys. D}, 240(18):1416--1425, 2011.

\bibitem{Hoke_Anthes_1976_MWR}
J.~E. Hoke and R.~A. Anthes.
\newblock The initialization of numerical models by a dynamic-initialization
  technique.
\newblock {\em Monthly Weather Review}, 104(12):1551--1556, 1976.

\bibitem{Jolly_Martinez_Olson_Titi_2018_blurred_SQG}
M.~S. Jolly, V.~R. Martinez, E.~J. Olson, and E.~S. Titi.
\newblock Continuous data assimilation with blurred-in-time measurements of the
  surface quasi-geostrophic equation.
\newblock {\em Chin. Ann. Math. Ser. B}, 40(5):721--764, 2019.

\bibitem{Jolly_Martinez_Titi_2017}
M.~S. Jolly, V.~R. Martinez, and E.~S. Titi.
\newblock A data assimilation algorithm for the subcritical surface
  quasi-geostrophic equation.
\newblock {\em Adv. Nonlinear Stud.}, 17(1):167--192, 2017.

\bibitem{lakshmivarahan2013nudging}
S.~Lakshmivarahan and J.~M. Lewis.
\newblock Nudging methods: {A} critical overview.
\newblock {\em Data Assim. Atmos. Ocean. Hydro. Appl. (Vol. II)}, pages 27--57,
  2013.

\bibitem{Larios_Pei_2017_KSE_DA_NL}
A.~Larios and Y.~Pei.
\newblock Nonlinear continuous data assimilation.
\newblock (submitted) arXiv:1703.03546.

\bibitem{Larios_Pei_2018_NSV_DA}
A.~Larios and Y.~Pei.
\newblock Approximate continuous data assimilation of the 2{D}
  {N}avier--{S}tokes equations via the {V}oigt-regularization with observable
  data.
\newblock {\em Evol. Equ. Control Theory}, 9(3):733--751, 2020.

\bibitem{Larios_Pei_Victor_2023}
A.~Larios, Y.~Pei, and C.~Victor.
\newblock The second-best way to do sparse-in-time continuous data
  assimilation: {I}mproving convergence rates for the 2{D} and 3{D}
  {N}avier--{S}tokes equations.
\newblock {\em (submitted) arXiv:2303.03495}, 2023.

\bibitem{Larios_Petersen_Victor_2023}
A.~Larios, M.~Petersen, and C.~Victor.
\newblock Application of a continuous data assimilation algorithm in
  high-resolution ocean modeling.
\newblock 2023.
\newblock (preprint).

\bibitem{Larios_Victor_2019}
A.~Larios and C.~Victor.
\newblock Continuous data assimilation with a moving cluster of data points for
  a reaction diffusion equation: a computational study.
\newblock {\em Commun. Comput. Phys.}, 29(4):1273--1298, 2021.

\bibitem{Law_Stuart_Zygalakis_2015_book}
K.~Law, A.~Stuart, and K.~Zygalakis.
\newblock {\em A {M}athematical {I}ntroduction to {D}ata {A}ssimilation},
  volume~62 of {\em Texts in Applied Mathematics}.
\newblock Springer, Cham, 2015.

\bibitem{Lions_1959}
J.-L. Lions.
\newblock Quelques r\'esultats d'existence dans des \'equations aux
  d\'eriv\'ees partielles non lin\'eaires.
\newblock {\em Bull. Soc. Math. France}, 87:245--273, 1959.

\bibitem{Lunasin_Titi_2015}
E.~Lunasin and E.~S. Titi.
\newblock Finite determining parameters feedback control for distributed
  nonlinear dissipative systems---a computational study.
\newblock {\em Evol. Equ. Control Theory}, 6(4):535--557, 2017.

\bibitem{Luo_Titi_2020_Lions}
T.~Luo and E.~S. Titi.
\newblock Non-uniqueness of weak solutions to hyperviscous {N}avier--{S}tokes
  equations: on sharpness of {J}.-{L}. {L}ions exponent.
\newblock {\em Calculus of Variations and Partial Differential Equations},
  59(3):92, 2020.

\bibitem{Martinez_2022}
V.~R. Martinez.
\newblock Convergence analysis of a viscosity parameter recovery algorithm for
  the 2{D} {N}avier--{S}tokes equations.
\newblock {\em Nonlinearity}, 35(5):2241--2287, 2022.

\bibitem{Olson_Titi_2003}
E.~Olson and E.~S. Titi.
\newblock Determining modes for continuous data assimilation in 2{D}
  turbulence.
\newblock {\em J. Statist. Phys.}, 113(5-6):799--840, 2003.
\newblock Progress in statistical hydrodynamics (Santa Fe, NM, 2002).

\bibitem{Pachev_Whitehead_McQuarrie_2021concurrent}
B.~Pachev, J.~P. Whitehead, and S.~A. McQuarrie.
\newblock Concurrent multiparameter learning demonstrated on the
  {K}uramoto-{S}ivashinsky equation.
\newblock {\em SIAM J. Sci. Comput.}, 44(5):A2974--A2990, 2022.

\bibitem{Pei_2019}
Y.~Pei.
\newblock Continuous data assimilation for the 3{D} primitive equations of the
  ocean.
\newblock {\em Commun. Pure Appl. Anal.}, 18(2):643--661, 2019.

\bibitem{Robinson_2001}
J.~C. Robinson.
\newblock {\em Infinite-{D}imensional {D}ynamical {S}ystems}.
\newblock Cambridge Texts in Applied Mathematics. Cambridge University Press,
  Cambridge, 2001.
\newblock An {I}ntroduction to {D}issipative {P}arabolic {PDE}s and the
  {T}heory of {G}lobal {A}ttractors.

\bibitem{Temam_1997_IDDS}
R.~Temam.
\newblock {\em Infinite-{D}imensional {D}ynamical {S}ystems {I}n {M}echanics
  and {P}hysics}, volume~68 of {\em Applied Mathematical Sciences}.
\newblock Springer-Verlag, New York, second edition, 1997.

\bibitem{Temam_2001_Th_Num}
R.~Temam.
\newblock {\em {N}avier--{S}tokes {E}quations: {T}heory and {N}umerical
  {A}nalysis}.
\newblock AMS Chelsea Publishing, Providence, RI, 2001.
\newblock Theory and numerical analysis, Reprint of the 1984 edition.

\bibitem{wu2003generalized}
J.~Wu.
\newblock Generalized {MHD} equations.
\newblock {\em J. Differential Equations}, 195(2):284--312, 2003.

\end{thebibliography}

\end{small}

\end{document}